\documentclass[10.8pt]{amsart}

\usepackage{amsmath, amssymb, amsfonts}
\usepackage{graphicx, overpic}
\usepackage{color}
\usepackage{enumerate}
\usepackage{multicol}
\usepackage{xypic}
\usepackage{mathrsfs}
\usepackage[T1]{fontenc}

\linespread{1.15}
\parskip.07in
\usepackage[margin=1.2in]{geometry}

\numberwithin{equation}{section}
\numberwithin{figure}{section}

\theoremstyle{plain}
\newtheorem{thm}{Theorem}[section]
\newtheorem{lemma}[thm]{Lemma}
\newtheorem{prop}[thm]{Proposition}
\newtheorem{coro}[thm]{Corollary}
\theoremstyle{definition}
\newtheorem{defi}[thm]{Definition}
\newtheorem{example}[thm]{Example}
\newtheorem{rema}[thm]{Remark}

\newtheorem{claim}[thm]{Claim}

\newtheorem{nota}[thm]{Notation}
\newtheorem{question}[thm]{Question}


\newcommand{\R}{\mathbb{R}}
\newcommand{\N}{\mathbb{N}}

\newcommand{\Hy}{\mathbb{H}}



\newcommand{\cG}{\mathcal{G}}

\newcommand{\cO}{\mathcal{O}}
\newcommand{\cP}{\mathcal{P}}

\newcommand{\cT}{\mathcal{T}}

\newcommand{\cY}{\mathcal{Y}}


\newcommand{\G}{\Gamma}

\newcommand{\al}{\alpha}
\newcommand{\be}{\beta}
\newcommand{\Si}{\Sigma}


\newcommand{\Isom}{\operatorname{Isom}}
\newcommand{\Aut}{\operatorname{Aut}}

\newcommand{\diam}{\operatorname{diam}}
\newcommand{\Stab}{\operatorname{Stab}}
\newcommand{\Vol}{\operatorname{Vol}}
\newcommand{\Cay}{\operatorname{Cay}}

\newcommand{\p}{\partial}

\newcommand{\wt}{\widetilde}

\newcommand{\fhyp}{\Hy_{\mathbb{F}}^n}

\definecolor{amethyst}{rgb}{0.6, 0.4, 0.8}

\newcommand{\hide}[1]{}

\DeclareMathOperator{\Fix}{Fix}

\title{Action rigidity for free products of hyperbolic manifold groups}
\author{Emily Stark and Daniel J. Woodhouse}
\date{\today}
\address{Department of Mathematics, Wesleyan University. 265 Church Street, Middletown, CT 06457, United States}
\email{estark@wesleyan.edu}
\address{Mathematical Institue, University of Oxford. Andrew Wiles Building, Radcliffe Observatory Quarter, Woodstock Road, Oxford, OX2 6GG, United Kingdom}
\email{daniel.woodhouse@maths.ox.ac.uk}

\subjclass{20F65, 20F67, 20E06, 57M07, 57M10}

\begin{document}

\begin{abstract}
 Two groups have a \emph{common model geometry} if they act properly and cocompactly by isometries on the same proper geodesic metric space.
 The Milnor-Schwarz lemma implies that groups with a common model geometry are quasi-isometric; however, the converse is false in general.
 We consider free products of uniform lattices in isometry groups of rank-1 symmetric spaces and prove, within each quasi-isometry class, residually finite groups that have a common model geometry are abstractly commensurable. 
 Our result gives the first examples of hyperbolic groups that are quasi-isometric but do not {\it virtually} have a common model geometry. Indeed, each quasi-isometry class contains infinitely many abstract commensurability classes. 
 We prove that two free products of closed hyperbolic surface groups have a common model geometry if and only if the groups are isomorphic. This result combined with a commensurability classification of Whyte yields the first examples of torsion-free abstractly commensurable hyperbolic groups that do not have a common model geometry. 
 An important component of the proof is a generalization of Leighton's graph covering theorem. 
 The main theorem depends on residual finiteness, and we show that finite extensions of uniform lattices in rank-1 symmetric spaces that are not residually finite would give counterexamples. 
\end{abstract}

\maketitle

    \section{Introduction}

    The study of the large-scale geometry of finitely generated groups seeks to relate three notions: the quasi-isometry class of a group, the abstract commensurability class of a group, and geometric actions of a group on proper geodesic metric spaces. 
    Within this framework, first suggested by Gromov~\cite{gromov}, quasi-isometry and abstract commensurability define equivalence relations on the class of finitely generated groups.
    Moreover, abstract commensurability and geometric actions on a common proper geodesic metric space imply a quasi-isometry (the latter being the Milnor-Schwartz lemma). 
 
    The large-scale geometry of a free product of finitely generated hyperbolic groups depends only on the one-ended factors; the quasi-isometry classification in this setting was given by Papasoglu--Whyte~\cite{papasogluwhyte}.  Martin--\'{S}wiatkowski~\cite{martinswiatkowski} further proved that the boundary of such a group is determined up to homeomorphism by the homeomorphism types of the boundaries of the one-ended factors.
    Thus, there is a great deal of flexibility in creating quasi-isometric groups by free product constructions. In contrast, we prove in this paper that a strong form of rigidity may hold if one requires the groups act geometrically on the same space. 
    
    A {\it model geometry} for a group is a proper geodesic metric space on which the group acts {\it geometrically}, i.e. properly and cocompactly by isometries.
    In parallel to the notion of \emph{quasi-isometric rigidity}, we define a group $G$ to be \emph{action rigid} if any group that shares a common model geometry with $G$ is abstractly commensurable to $G$. 
    For example, closed hyperbolic $n$-manifold groups are not action rigid for each $n \geq 3$, as they all act geometrically on $\Hy^n$, but there are infinitely many abstract commensurability classes of such groups. 
    On the other hand, any group that is quasi-isometrically rigid is action rigid.
    We consider action rigidity within classes of groups for which the quasi-isometry and abstract commensurability classifications do not coincide. 
    
    The first examples of hyperbolic groups that are quasi-isometric but do not have a common model geometry were given by Mosher--Sageev--Whyte~\cite{moshersageevwhyteI}. 
    Let $G_p = \mathbb{Z}/p\mathbb{Z} * \mathbb{Z}/p\mathbb{Z}$ for some prime $p > 2$.
    A group in the class $\{G_p \,| \, p > 2  \textrm{ is prime}\}$ is virtually free and has a natural action on the Bass-Serre tree associated to its splitting as a free product.
    Although all groups in the set $\{G_p \,| \, p > 2  \textrm{ is prime}\}$ are quasi-isometric, Mosher--Sageev--Whyte~\cite{moshersageevwhyteI} showed that the groups $G_p$ and $G_q$ have a common model geometry if and only if $p=q$.
    All groups in this class \emph{virtually} have a common model geometry, meaning that two such groups have finite-index subgroups that have a common model geometry.
    Indeed, any pair of finitely generated, non-abelian free groups act geometrically on the $4$-valent tree.
    The torsion in $G_p$ is precisely the obstruction to finding a common model geometry -- the proof exploits the fact that any proper, minimal action of $G_p$ on a simplicial tree must be the natural action on the $p$-regular tree. 
    A class of groups called {\it simple surface amalgams} gives examples of torsion-free hyperbolic groups that are quasi-isometric but do not have a common model geometry, as shown by the authors~\cite{starkwoodhouse}.
    
    Outside the setting of hyperbolic groups, Das--Tessera~\cite[Theorem 1.1]{DasTessera16} proved that 
    if $\Gamma_g$ denotes the fundamental group of a genus $g \geq 2$ surface, then $\Gamma_g \times \mathbb{Z}$ and the canonical central extension $\wt{\Gamma}_g$ of $\Gamma_g$, are not \emph{integrably measure equivalent}. However, these groups are quasi-isometric. Having a common model geometry implies that two groups are integrably measure equivalent, and integrably measure equivalence is an equivalence relation implied by abstract commensurability. Thus, $\wt{\Gamma}_g$ and $\Gamma_g \times \mathbb{Z}$ are quasi-isometric, but do not virtually have a common model geometry.

    In this paper we give the first examples of hyperbolic groups that are quasi-isometric and do not virtually have a common model geometry. We study action rigidity for free products of closed hyperbolic manifold groups, and, more generally, for the quasi-isometry class of such groups; see Theorem~\ref{thm_FINAL} for a more general statement.
    
    \begin{thm} \label{thm_first} 
     Let $G = H_1 * \ldots * H_k * F_n$, where $H_i$ is a uniform lattice in the isometry group of a rank-1 symmetric space for $1 \leq i \leq k$, and $F_n$ is a finitely generated free group.
     Suppose that $G'$ is residually finite.
     If $G$ and $G'$ have a common model geometry, then $G$ and $G'$ are abstractly commensurable.
    \end{thm}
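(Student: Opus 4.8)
My plan is to adapt the strategy of \cite{starkwoodhouse} for simple surface amalgams: pass from the common model geometry to a common tree-of-spaces structure, rigidify the vertex spaces using quasi-isometric rigidity of rank-$1$ symmetric spaces, and then extract a common finite cover by a Leighton-type theorem. I assume the free product is non-trivial, so that $G$ --- and hence $X$ and $G'$ --- has infinitely many ends. Since $H_1,\dots,H_k$ and $F_n$ are hyperbolic, $G$ is hyperbolic; fix a proper geodesic space $X$ on which $G$ and $G'$ act geometrically. By the Milnor--Schwarz lemma $G'$ is quasi-isometric to $G$, hence hyperbolic, and $X$ is Gromov hyperbolic with $\pb X \cong \p G$. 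Each $H_i$ is a one-ended quasi-convex subgroup of $G$ with limit set $\Lambda_i \subseteq \pb X$ a canonically embedded sphere; the $G$-translates of the $\Lambda_i$ are exactly the limit sets of the maximal one-ended quasi-convex subgroups of $G$, a family determined --- together with its coarse incidence pattern --- by the topology of $\pb X$, using the analysis of boundaries of free products of Martin--\'{S}wiatkowski~\cite{martinswiatkowski}. Since that family is intrinsic to $X$, the group $G'$ sees it too; combined with Stallings' theorem and accessibility for the hyperbolic group $G'$, and with the quasi-isometry classification of free products of Papasoglu--Whyte~\cite{papasogluwhyte}, I obtain a Grushko decomposition $G' = H_1' * \cdots * H_k' * F_{n'}$, reindexed so that $H_i'$ is quasi-convex with $\Lambda(H_i')$ a translate of $\Lambda_i$. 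In particular $H_i'$ is quasi-isometric to $H_i$, and the quasi-convex hull $Y_i \subseteq X$ of $\Lambda_i$ is a common model geometry for suitable conjugates of $H_i$ and of $H_i'$.

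\emph{Rigidifying the vertex pieces.} Let $S_i$ be the rank-$1$ symmetric space with $H_i$ a uniform lattice in $\Isom(S_i)$. Quasi-isometric rigidity of $S_i$ --- Tukia together with convergence-group theory (Gabai, Casson--Jungreis) in the real hyperbolic case, and Pansu and Chow in the complex, quaternionic and octonionic cases --- shows that $H_i'$ acts geometrically on $S_i$, so $H_i'$ has a finite normal subgroup whose quotient is a uniform lattice in $\Isom(S_i)$. Using that $G'$ is residually finite (hence virtually torsion-free), I pass to a finite-index subgroup of $G'$; by Kurosh's theorem and the previous sentence it is a free product of torsion-free uniform lattices --- fundamental groups of closed $S_i$-manifolds, some number of each type $i$ --- together with a free group. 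Doing the same to $G$, I reduce to showing that two such ``standard'' free products $\pi_1\cA$ and $\pi_1\cB$ (fundamental groups of finite graphs of groups with trivial edge groups and closed rank-$1$ manifold vertex groups) are abstractly commensurable. The obstruction to doing this naively is that, although the vertex lattices occurring in $\cA$ and in $\cB$ are quasi-isometric, they need not be commensurable a priori; the common model geometry must be used to bridge this.

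\emph{The Leighton step, and the main obstacle.} The $G$-action exhibits $X$, up to quasi-isometry, as a tree of spaces over the Bass--Serre tree of $\cA$ --- vertex spaces the hulls $Y_i$ (each quasi-isometric to $S_i$) and points for the free part, edge spaces uniformly bounded since the edge groups are trivial --- and the $G'$-action does the same over the tree of $\cB$; since the pieces and their incidence are determined by $X$, the two structures share the same total space and underlying tree. Rigidifying each $Y_i$ to a fixed copy of $S_i$ via Mostow/quasi-isometric rigidity turns the incident edge-attaching data into a pair of locally finite, lattice-invariant patterns on $S_i$, one coming from $\cA$ and one from $\cB$. I then invoke the paper's generalization of Leighton's graph-covering theorem --- the main technical ingredient --- which should assert that two finite graphs of groups with trivial edge groups and vertex groups among closed rank-$1$ manifold groups, sharing a common tree-of-spaces development, admit a common finite-sheeted covering; a common cover yields isomorphic finite-index subgroups of $\pi_1\cA \leq G$ and $\pi_1\cB \leq G'$, so $G$ and $G'$ are abstractly commensurable. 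The hard part will be this covering theorem and, inside it, the role of residual finiteness: classical Leighton concerns $1$-dimensional objects with no interior geometry, so one must first make the edge-attaching data a genuine invariant pattern and then realize a reconciliation of two such patterns by \emph{honest} finite covers, and it is exactly this realization that requires separability- and omnipotence-type finiteness properties of the vertex groups and of $G'$. This is not a technicality: a hypothetical non-residually-finite finite extension of a uniform rank-$1$ lattice would be quasi-isometric to, and share a model geometry with, such a standard free product without being abstractly commensurable to it. The remaining points --- making the piece-pattern and the coarse tree intrinsic to $X$, aligning the two tree-of-spaces structures, bookkeeping the finite normal subgroups from non-faithful actions on the $S_i$, and matching the free ranks by an Euler characteristic count once the vertex data agree --- I expect to be routine by comparison.
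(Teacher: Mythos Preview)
Your proposal is correct in outline and follows essentially the same three-stage architecture as the paper: promote the common model geometry to an $\Isom(X)$-equivariant tree of spaces (the paper does this via weak convex hulls, a Rips complex, and Dunwoody tracks), rigidify the one-ended vertex spaces to genuine copies of $\mathbb{H}^n_{\mathbb{F}}$ via Tukia/Pansu/Chow, and then run a Leighton-type argument on the resulting ideal graphs of spaces.

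Two small corrections on mechanism. First, residual finiteness is used in the paper only to pass to torsion-free finite-index subgroups of $G$ and $G'$; the actual ``reconciliation of patterns'' comes from taking normal cores $\hat K_v$ of the vertex lattices inside the discrete group $K_v = \Stab_H(X_v^+)/\Fix_H(X_v^+)$, not from separability or omnipotence as you suggest. Second, the Leighton step is not a direct common-cover statement for graphs of rank-$1$ manifold groups: the paper first produces a common \emph{infinite} regular cover $\breve\chi$ whose underlying tree $\breve T$ is locally finite, shows $\Isom(\breve\chi)$ is a $1$-symmetry-restricted subgroup of $\Aut(\breve T)$, and only then applies the symmetry-restricted Leighton theorem to the free images of $\pi_1\hat\chi$ and $\pi_1\hat\chi'$ there. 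The free factors are absorbed into this single argument; there is no separate Euler-characteristic matching of ranks.
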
     
   
    Note that in the case where $k=0$, this theorem is just the abstract commensurability of finitely generated free groups. Theorem~\ref{thm_FINAL} is phrased in terms of Stallings-Dunwoody decompositions and we show that after quasi-conjugating the action to a new model geometry, the groups are weakly commensurable in the isometry group the new model geometry. 
    
    Recall, the classification of non-compact rank-$1$ symmetric spaces consists of real hyperbolic space $\mathbb{H}^n = \mathbb{H}^n_{\mathbb{R}}$, complex hyperbolic space $\mathbb{H}^n_{\mathbb{C}}$, quaternionic hyperbolic space $\mathbb{H}^n_{\mathbb{H}}$ (all for $n \geq 2$), and the ``exceptional case'' of the Cayley hyperbolic plane $\mathbb{H}^2_{\textbf{Ca}}$.
    We will use the notation $\mathbb{H}^n_{\mathbb{F}}$ to denote any one of these possible rank-$1$ symmetric spaces, and we define a {\it closed hyperbolic manifold group} to be the fundamental group of a closed manifold that admits the geometry of $\mathbb{H}^n_{\mathbb{F}}$ for some $n$ and $\mathbb{F}$.
    See~\cite{Mostow73} for details. 
    
    It is an open question of considerable interest in the field if hyperbolic groups are residually finite.
    So the residual finiteness assumption in Theorem~\ref{thm_first} could in principal be redundant.
    Conversely, in Section~\ref{sec:hypotheticalCounterexample}, we show that if there exists a non-residually finite finite extension of a uniform lattice in a rank-1 symmetric space, then there exists a pair of groups $G$ and $G'$ that share a common model geometry such that $G$ decomposes as in the statement of the theorem, $G'$ is not residually finite, and $G$ and $G'$ are not even virtually isomorphic. 
    Note that if $G'$ decomposes as a free product of uniform lattices in a  manner similar to $G'$, then $G'$ is residually finite since these lattices are finitely generated and linear, hence residually finite, and free products of residually finite groups are residually finite.
    As a consequence of the resolution of the Virtual Haken Conjecture~\cite{agol, wise_hier}, if $G'$ is cocompactly cubulated, then $G'$ is residually finite.
    If each $H_i$ is quasi-isometric to $\mathbb{H}^2$ or $\mathbb{H}^3$, then the residual finiteness assumption is satisfied. 
    More generally, it is not known if finite extensions of uniform lattices in rank-1 symmetric spaces
    are residually finite.
    Indeed, finite central extensions of lattices in $Sp(n,1)$ are considered by some to be likely candidates for a non-residually finite hyperbolic group.
    In~\cite{GrunewaldJaikin-ZapirainZalesskii08, GrunewaldJaikin-ZapirainPintoZalesskii14} the authors study ``cohomological goodness'', a criterion for residual finiteness to be preserved in finite extensions. See~\cite{Deligne78, Hill19} for non-residually finite examples of (non-hyperbolic) extensions of arithmetic groups. 
    
    Free products of closed hyperbolic manifold groups is a family closed under passing to finite-index subgroups. Moreover, each quasi-isometry class contains infinitely many commensurability classes; see Lemma~\ref{lem:infinitelyManyClasses}. Thus, we have the following.
    
    \begin{coro} \label{cor_qi_no_mg} There are torsion-free hyperbolic groups which are quasi-isometric but cannot \emph{virtually} act on a common model geometry. Moreover, there are examples $G$ and $G'$ for which the ratios of the non-vanishing $\ell^2$-Betti numbers are equal: $\frac{b_k^{(2)}(G)}{b_k^{(2)}(G')}= C$. 
    \end{coro}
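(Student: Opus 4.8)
\emph{Proof plan.} The idea is to realize all the claims inside the quasi-isometry class of free products of closed hyperbolic surface groups, where Theorem~\ref{thm_first} applies directly. Write $S_g$ for the fundamental group of the closed orientable genus-$g$ surface, and let $\mathcal{C}$ be the family of groups $G = S_{g_1} * \cdots * S_{g_k}$ with $k \geq 2$ and all $g_i \geq 2$. Each $G \in \mathcal{C}$ is torsion-free and hyperbolic, being a free product of finitely many torsion-free hyperbolic groups, and since each $S_{g_i}$ is a uniform lattice in $\Isom(\Hy^2)$, every $G \in \mathcal{C}$ has the form appearing in Theorem~\ref{thm_first} with $n = 0$. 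By the Kurosh subgroup theorem every finite-index subgroup of such a $G$ is a free product of finite-index subgroups of the $S_{g_i}$ (again closed hyperbolic surface groups) with a finitely generated free group, so finite-index subgroups of members of $\mathcal{C}$ again have the form of Theorem~\ref{thm_first}; and every member of $\mathcal{C}$, as well as every finite-index subgroup of one, is residually finite, since surface groups are residually finite and free products of residually finite groups are residually finite.

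First I would observe that $\mathcal{C}$ lies in a single quasi-isometry class: the one-ended factors in the Stallings--Dunwoody decomposition of any $G \in \mathcal{C}$ are closed hyperbolic surface groups, hence all quasi-isometric to $\Hy^2$, and no member of $\mathcal{C}$ is virtually free, so by Papasoglu--Whyte~\cite{papasogluwhyte} any two members of $\mathcal{C}$ are quasi-isometric. By Lemma~\ref{lem:infinitelyManyClasses} this quasi-isometry class contains infinitely many abstract commensurability classes, so I may fix $G, G' \in \mathcal{C}$ lying in distinct ones; then $G$ and $G'$ are not abstractly commensurable. Suppose now that some finite-index subgroups $G_0 \leq G$ and $G_0' \leq G'$ share a common model geometry. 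By the first paragraph $G_0$ has the form of Theorem~\ref{thm_first} and $G_0'$ is residually finite, so Theorem~\ref{thm_first} forces $G_0$ and $G_0'$ to be abstractly commensurable; since $G$ is commensurable with $G_0$ and $G'$ with $G_0'$ and abstract commensurability is transitive, this would make $G$ and $G'$ abstractly commensurable, a contradiction. Hence $G$ and $G'$ do not virtually have a common model geometry (taking $G_0 = G$ and $G_0' = G'$ shows in particular that they have no common model geometry at all), which proves the first assertion.

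For the $\ell^2$-Betti statement, fix $G = S_{g_1} * \cdots * S_{g_k} \in \mathcal{C}$. As $G$ is infinite, $b_0^{(2)}(G) = 0$; the free-product formula for $\ell^2$-Betti numbers gives $b_p^{(2)}(G) = \sum_i b_p^{(2)}(S_{g_i})$ for $p \geq 2$, which vanishes because $b_p^{(2)}(S_g) = 0$ for $p \neq 1$, while $b_1^{(2)}(G) = \sum_i b_1^{(2)}(S_{g_i}) + (k-1) = \sum_i (2g_i - 2) + (k - 1) = -\chi(G) > 0$. Thus the unique non-vanishing $\ell^2$-Betti number of any member of $\mathcal{C}$ occurs in degree $1$. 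In particular, for the pair $G, G'$ above every ratio of non-vanishing $\ell^2$-Betti numbers equals the single value $C := b_1^{(2)}(G)/b_1^{(2)}(G') = \chi(G)/\chi(G')$, a positive rational; being a ratio of positive integers, $C$ is consistent with abstract commensurability, so the failure of $G$ and $G'$ to be commensurable is invisible to $\ell^2$-Betti numbers --- here, in contrast to the examples of Das--Tessera recalled above, even though those numbers do not vanish.

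The only substantial inputs are Theorem~\ref{thm_first} and Lemma~\ref{lem:infinitelyManyClasses}; beyond these the corollary is bookkeeping. The point requiring attention is arranging that a \emph{single} pair $G, G'$ simultaneously lies in one quasi-isometry class, fails to virtually share a model geometry, and has a well-defined constant ratio of non-vanishing $\ell^2$-Betti numbers --- which amounts to confirming that the quasi-isometry classification of Papasoglu--Whyte, the commensurability classification used for Lemma~\ref{lem:infinitelyManyClasses}, and the Euler-characteristic computation above are mutually compatible. If one additionally wishes to pin down the value of $C$ --- for instance to obtain $C = 1$, which requires a pair of non-commensurable members of $\mathcal{C}$ of equal Euler characteristic --- one extracts such a pair from the commensurability classification, and this is where the last bit of care is needed.
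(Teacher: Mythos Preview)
Your proof is correct and follows essentially the same approach as the paper. The paper's argument is the terse sentence preceding the corollary---that free products of closed hyperbolic manifold groups are closed under finite-index subgroups and that each quasi-isometry class contains infinitely many commensurability classes (Lemma~\ref{lem:infinitelyManyClasses}), so Theorem~\ref{thm_first} yields the first claim---together with the observation in Remark~\ref{rem:proportionality} that surface group free products have only one non-vanishing $\ell^2$-Betti number; your write-up is a careful expansion of exactly this, specialized to surface groups from the outset.
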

    
    \begin{rema} \label{rem:proportionality}
     After the initial preprint for this paper was presented, Kevin Schreve observed that Corollary~\ref{cor_qi_no_mg} can also be deduced for certain examples among the groups considered here via an application of the {\it proportionality principal} for $\ell^2$-Betti numbers due to Gaboriau~\cite{Gaboriau02, Gaboriau2019}.
     This principal states that if $G$ and $G'$ are uniform lattices in a locally compact, second countable group $H$, then their $\ell^2$-Betti numbers are related as follows:
     \[
      \frac{b_k^{(2)}(G)}{\Vol(G \backslash H)} = \frac{b_k^{(2)}(G')}{\Vol(G' \backslash H)},
     \]
     where volume in the formula is given by the Haar measure on $H$.
     If $G$ and $G'$ have a common model geometry $X$, then they embed as uniform lattices in the isometry group $H = \Isom(X)$, which is locally compact and second countable 
     (see~\cite[Lemma 5.B.4]{CornulierDeLaHarpe}). 
     Thus, the ratio $\frac{b_k^{(2)}(G)}{b_k^{(2)}(G')}$ of all the non-vanishing $\ell^2$-Betti numbers of $G$ and $G'$ must be equal.
     Moreover, since both $\ell^2$-Betti numbers and covolume scale by degree upon passing to finite-index subgroups, the ratio is preserved for finite-index subgroups.
     Thus, if some ratios are not equal for $G$ and $G'$, then the corresponding ratios will not be equal for any pair of finite-index subgroups $G_0 \leqslant G$ and $G'_0 \leqslant G$.
     
     To construct a pair of quasi-isometric free products with no common model geometry by these methods, let $M$ and $M'$ be closed hyperbolic $4$-manifolds with distinct Euler characteristic.
     The $\ell^2$-Betti numbers of $\pi_1(M)$ and $\pi_1(M')$ vanish aside from the second, which equals the Euler characteristic.
     Let $G = \pi_1 (M) * \mathbb{Z}$ and $G' = \pi_1 (M') * \mathbb{Z}$, which by Mayer-Vietoris will have $b_1^{(2)}(G) = b_1^{(2)}(G')$ and $b_2^{(2)}(G) \neq b_2^{(2)}(G')$.
     In contrast, these methods cannot be applied to free products of surface groups or closed $3$-manifold groups, which have only one non-vanishing $\ell^2$-Betti number.
    \end{rema}

    The strongest form of action rigidity occurs when the groups considered here are surface groups and there are exactly two factors.     
     
    \begin{thm} \label{thm_intro_surface_case} 
     Let $G \cong \pi_1(S_{g_1}) * \pi_1(S_{g_2})$ and $G' \cong \pi_1(S_{h_1}) * \pi_1(S_{h_2})$ be free products of fundamental groups of closed orientable surfaces of genus at least two. The groups $G$ and $G'$ have a common model geometry if and only if the groups $G$ and $G'$ are isomorphic.  
    \end{thm}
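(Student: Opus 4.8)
The ``if'' direction is trivial, since isomorphic groups act geometrically on a common Cayley graph; the plan is to prove the ``only if'' direction. First I would note that closed surface groups are residually finite and that residual finiteness is inherited by free products, so $G'$ is residually finite and Theorem~\ref{thm_FINAL} applies. After quasi-conjugating the actions I may therefore assume that $G$ and $G'$ act geometrically on a common model geometry $X$ in which they are weakly commensurable, and, replacing $G'$ by a conjugate inside $\Isom(X)$, that there is an honest common finite-index subgroup $L \leqslant G \cap G'$; write $d := [G:L]$ and $d' := [G':L]$. The essential point — and the reason the Stallings--Dunwoody formulation of Theorem~\ref{thm_FINAL} is needed rather than mere abstract commensurability — is that $L$ now acts on the canonical structure tree $T$ of $X$ by a single action that is the restriction both of the $G$-action and of the $G'$-action. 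Since the free splitting $G = \pi_1(S_{g_1}) * \pi_1(S_{g_2})$ is the Stallings--Dunwoody decomposition of $G$, the action $G \curvearrowright T$ is equivariantly the Bass--Serre action of this splitting: two vertex orbits with stabilizers conjugate to $\pi_1(S_{g_1})$ and $\pi_1(S_{g_2})$, a single edge orbit, and trivial edge stabilizers; and the same holds for $G' \curvearrowright T$ with genera $h_1, h_2$.

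The next step is to read off the combinatorics of the quotient graph of groups $\Theta$ for the $L$-action on $T$. Its underlying graph is connected, and it is bipartite because $L$ preserves the two vertex types of the free splitting; let $V_0 \sqcup V_1$ be its (necessarily unique) bipartition. The graph-of-groups covering maps from $\Theta$ onto the two segments $G\backslash\backslash T$ and $G'\backslash\backslash T$ each realize this bipartition, so after relabeling I may assume that the vertices of $V_0$ are simultaneously the $L$-orbits of $\pi_1(S_{g_1})$-vertices of $T$ (hence in bijection with $L\backslash G/\pi_1(S_{g_1})$) and the $L$-orbits of $\pi_1(S_{h_1})$-vertices (hence in bijection with $L\backslash G'/\pi_1(S_{h_1})$), with $V_1$ analogous for $g_2$ and $h_2$. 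Now two counts finish the proof. Counting edges: the edges of $\Theta$ are the $L$-orbits of edges of $T$, and since $G$ acts transitively on edges of $T$ with trivial stabilizer, there are $|L\backslash G| = d$ of them; the same count from $G'$ gives $d'$ edges, so $d = d'$. Counting Euler characteristic: realizing $L$ as the fundamental group of the degree-$d$ cover of the graph of spaces for $G$ with surface pieces $S_{g_1}, S_{g_2}$, the vertices of $V_0$ are exactly the components of the preimage of the $S_{g_1}$-piece, and these are covers of $S_{g_1}$ whose degrees sum to $d$; hence $\sum_{v \in V_0} |\chi(L_v)| = d\,|\chi(\pi_1(S_{g_1}))| = 2d(g_1 - 1)$. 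Running the identical computation for $L \leqslant G'$ gives $2d'(h_1 - 1) = 2d(h_1 - 1)$, so $g_1 = h_1$, and the same argument on $V_1$ gives $g_2 = h_2$. Therefore $\{g_1, g_2\} = \{h_1, h_2\}$ and $G \cong G'$.

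I expect the real obstacle to lie entirely in the first step: one must know that the structure tree furnished by the common model geometry is genuinely the Bass--Serre tree of the free splitting for each of $G$ and $G'$ — correct vertex and edge stabilizers, no superfluous trivial-stabilizer vertices — and that the weak commensurability of Theorem~\ref{thm_FINAL} can be realized inside $\Isom(X)$ so that one fixed subgroup $L$ acts on one fixed tree compatibly from both sides. This is exactly what the Stallings--Dunwoody-phrased version of Theorem~\ref{thm_FINAL} delivers; without it, abstract commensurability alone only pins down $\chi(G) = \chi(G')$, that is, $g_1 + g_2 = h_1 + h_2$, together with a matching of the Kurosh data of a common finite-index subgroup that does not remember which free factor each surface piece descends from. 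Granting the first step, the remainder is the elementary double-coset and covering-degree bookkeeping above, whose rigidity comes precisely from there being exactly two factors (so the structure graph is bipartite with a unique bipartition) and from $|\chi|$ being a complete invariant of the genus.
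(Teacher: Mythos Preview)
Your argument is correct, but it takes a substantially heavier route than the paper's. You invoke Theorem~\ref{thm_FINAL}, which in turn depends on the symmetry-restricted Leighton machinery of Section~\ref{sec:commonCover}, in order to obtain a common finite-index subgroup $L$ acting on the structure tree, and then you run a global double-coset and Euler-characteristic count over the quotient graph of groups. The paper bypasses all of Section~\ref{sec:commonCover}: once the ideal model geometry $Y$ is in hand (Proposition~\ref{prop:newModelSpace2}), it works entirely locally at a single one-ended vertex space $Y_v$. The stabilizer $K_v = \Stab_H(Y_v)/\Fix_H(Y_v)$ acts geometrically on $Y_v$ (Lemma~\ref{lemma:geo_action}), and since both $\Sigma_1$ and $\Sigma_1'$ act transitively on the edge-attachment points $\mathcal{P}_v$ (Lemma~\ref{lem:peripheralTransitivity}), their indices in $K_v$ are both read off from the orbifold degree at the single image point of $\mathcal{P}_v$ in $K_v \backslash Y_v$ --- hence equal. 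Equal index in a common overgroup gives equal Euler characteristic, so $\Sigma_1 \cong \Sigma_1'$ (Lemma~\ref{lemma:same_degree}); the same argument at the other vertex type finishes.

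What each approach buys: the paper's argument is strictly more elementary --- it needs neither residual finiteness nor Leighton's theorem, only the ideal model geometry and the transitivity lemma. Your route has the virtue of showing that the two-factor surface case is a formal consequence of the general action-rigidity theorem plus elementary Bass--Serre bookkeeping, but the bookkeeping still implicitly relies on the content of Lemma~\ref{lem:peripheralTransitivity} (that the $G$- and $G'$-actions on $T$ each have exactly one edge orbit and two vertex orbits), so you are not avoiding the Section~5 analysis, merely layering the Section~6 machinery on top of it.
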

        
    Whyte~\cite[Theorem 1.6]{whyte} proved that if $G =\pi_1(S_{g_1}) * \pi_1(S_{g_2})$ and $G' = \pi_1(S_{h_1}) * \pi_1(S_{h_2})$ are free products of fundamental groups of closed orientable surfaces of genus at least two, then $G$ and $G'$ are abstractly commensurable if and only if $\chi(G) = \chi(G')$, which is equivalent to $g_1 + g_2 = h_1 + h_2$.
    For example, $G \cong \pi_1(S_2)*\pi_1(S_4)$ and $G' \cong \pi_1(S_3)*\pi_1(S_3)$ are abstractly commensurable but do not have a common model geometry by Theorem~\ref{thm_intro_surface_case}. 
    Moreover, if $\hat G$ is isomorphic to finite-index subgroups of $G$ and $G'$, then both $G$ and $\hat G$ have a common model geometry, as do $G'$ and $\hat{G}$, but $G$ and $G'$ do not, so the property of having a common model geometry is not transitive on this family of groups. 
     Thus, in combination with Theorem~\ref{thm_intro_surface_case}, we have the following corollary. 
    
    \begin{coro}
     \begin{enumerate}
      \item There are torsion-free abstractly commensurable hyperbolic groups that do not have a common model geometry. 
      \item  The relation of having a common model geometry is not a transitive relation on the class of torsion-free hyperbolic groups. 
      \item For each $n >0$ there exist $n$ free products of closed hyperbolic manifold groups in the same abstract commensurability class that pairwise do not have a common model geometry. 
     \end{enumerate}
    \end{coro}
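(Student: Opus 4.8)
The plan is to deduce all three statements by combining Theorem~\ref{thm_intro_surface_case} with Whyte's commensurability classification~\cite[Theorem 1.6]{whyte}: the groups $\pi_1(S_{g_1}) * \pi_1(S_{g_2})$ and $\pi_1(S_{h_1}) * \pi_1(S_{h_2})$, all genera at least two, are abstractly commensurable exactly when $g_1 + g_2 = h_1 + h_2$. Every group appearing below is a free product of closed orientable hyperbolic surface groups, hence torsion-free (a free product of torsion-free groups) and word-hyperbolic (a free product of hyperbolic groups), so it lies in the asserted class. I will also use repeatedly that a free product decomposition into (non-trivial, one-ended) freely indecomposable factors is unique up to reordering and conjugacy, together with the fact that $\pi_1(S_g) \cong \pi_1(S_h)$ forces $g = h$ (compare first Betti numbers); thus two such free products of surface groups are isomorphic if and only if the multisets of genera of their factors coincide.

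For (1), I would take $G \cong \pi_1(S_2) * \pi_1(S_4)$ and $G' \cong \pi_1(S_3) * \pi_1(S_3)$. Since $2 + 4 = 3 + 3$, Whyte's theorem gives that $G$ and $G'$ are abstractly commensurable, while $\{2,4\} \neq \{3,3\}$ forces $G \not\cong G'$, so Theorem~\ref{thm_intro_surface_case} shows they have no common model geometry. For (2), I would keep this pair: as $G$ and $G'$ are abstractly commensurable there is a group $\hat G$ isomorphic to a finite-index subgroup of each. A group and any of its finite-index subgroups always share a model geometry — for instance a Cayley graph of the ambient group with respect to a finite generating set, on which the ambient group acts geometrically and the finite-index subgroup acts properly and cocompactly. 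Hence $G$ and $\hat G$ have a common model geometry, and so do $G'$ and $\hat G$, but $G$ and $G'$ do not; since $\hat G$, $G$, and $G'$ are all torsion-free hyperbolic, this exhibits the failure of transitivity.

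For (3), I would fix $n > 0$, set $N = 2n + 2$, and for $1 \le i \le n$ let $G_i \cong \pi_1(S_{i+1}) * \pi_1(S_{N-i-1})$. Each surface factor has genus between $2$ and $2n$, and $(i+1) + (N-i-1) = N$ is independent of $i$, so the groups $G_1, \dots, G_n$ are pairwise abstractly commensurable by Whyte's theorem. The unordered pairs $\{i+1, N-i-1\}$, namely $\{2,2n\}, \{3,2n-1\}, \dots, \{n+1,n+1\}$, are pairwise distinct, so $G_i \not\cong G_j$ whenever $i \ne j$, and Theorem~\ref{thm_intro_surface_case} gives that no two of them have a common model geometry. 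The entire content of the corollary is carried by Theorem~\ref{thm_intro_surface_case}; the remaining steps are bookkeeping, and the only point needing a sentence of justification — that a group shares a model geometry with each of its finite-index subgroups — is standard, so I anticipate no real obstacle.
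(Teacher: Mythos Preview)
Your proposal is correct and follows essentially the same approach as the paper. The paper does not give a formal proof of this corollary; the paragraph preceding it already spells out parts (1) and (2) using exactly the example $\pi_1(S_2)*\pi_1(S_4)$ versus $\pi_1(S_3)*\pi_1(S_3)$ and the common finite-index subgroup $\hat G$, and leaves part (3) implicit --- your explicit family $G_i \cong \pi_1(S_{i+1})*\pi_1(S_{2n+1-i})$ is precisely the natural construction one would write down.
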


    \begin{question}
     Is there a commensurability class of hyperbolic groups that contains an infinite subset consisting of groups that do not pairwise share a common model geometry?
    \end{question}
    
    The homeomorphism type of a cover of a closed surface by degree $d$ is determined by $d$, but this fails for higher-dimensional examples. A hyperbolic $3$-manifold may have many non-homeomorphic covers of the same degree; for example, see the discussion by Friedl--Park--Petri--Raimbault--Ray~\cite{FriedlEtAl}.
    Nonetheless, if the free product with amalgamation is of higher-dimensional hyperbolic manifold groups, information can still be deduced.
    
    \begin{thm} \label{thm_intro_amalgamation_case}
     Let $G \cong \pi_1(M_1) * \pi_1(M_2)$ and $G' \cong \pi_1(M_1') * \pi_1(M_2')$ be free products of fundamental groups of closed orientable hyperbolic manifolds. If the groups $G$ and $G'$ have a common model geometry, then, after possibly permuting the factors, the manifolds $M_i$ and $M_i'$ have the same volume.  
    \end{thm}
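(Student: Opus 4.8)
\emph{Proof plan.} The strategy is to reduce to the setting of Theorem~\ref{thm_FINAL} and then read off the volumes from an elementary analysis of the associated Stallings--Dunwoody tree, using Mostow rigidity to make sense of the volume of a vertex group. First note that $G'$ is residually finite: each $\pi_1(M_i')$ is a finitely generated linear group (a uniform lattice in the isometry group of a rank-$1$ symmetric space), hence residually finite, and a free product of residually finite groups is residually finite. Thus Theorem~\ref{thm_FINAL} applies to the common model geometry of $G$ and $G'$: after quasi-conjugating the actions we obtain a model geometry $Y$ on which $G$ and $G'$ both act geometrically and in which they are weakly commensurable. Conjugating one of the two actions, we may assume that $G$ and $G'$ contain a common finite-index subgroup $L$ realized as a subgroup of $\Isom(Y)$; being finite-index in a group acting geometrically on $Y$, the group $L$ also acts geometrically on $Y$. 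Write $N = [G:L]$ and $N' = [G':L]$; since $L$ is torsion-free, its action on $Y$ is faithful.

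Next I would pass to the Bass--Serre tree $\mathcal{T}$ of the Stallings--Dunwoody decomposition of $Y$; as $\mathcal{T}$ is canonically associated to $Y$, the group $\Isom(Y)$ --- and hence each of $G$, $G'$ and $L$ --- acts on $\mathcal{T}$. Because $\pi_1(M_1)$ and $\pi_1(M_2)$ are one-ended (fundamental groups of closed aspherical manifolds of dimension $\geq 2$) and $G = \pi_1(M_1) * \pi_1(M_2)$ has no free factor, the $G$-action on $\mathcal{T}$ is the Bass--Serre action of this free splitting: one orbit of edges, with trivial stabilizer, and two orbits of vertices, with stabilizers the conjugates of $\pi_1(M_1)$ and of $\pi_1(M_2)$; likewise for $G'$ with $M_1'$ and $M_2'$. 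Applying the Kurosh subgroup theorem to $L \leqslant G$ and reading it off the $L$-action on $\mathcal{T}$, the quotient graph of groups $\Gamma := \mathcal{T}/L$ is finite and connected, has trivial edge groups, and has vertex groups $\Stab_L(w)$ each isomorphic to $\pi_1$ of a finite cover of $M_1$ or of $M_2$; write $c(w)\in\{1,2\}$ for this label and $d(w)$ for the covering degree, so that by orbit counting $\sum_{c(w)=i} d(w) = N$ for $i = 1,2$. Reading $\Gamma$ instead through the $G'$-action, each $\Stab_L(w)$ is likewise $\pi_1$ of a finite cover of $M_1'$ or of $M_2'$, with a label $c'(w)$ and degree $d'(w)$ satisfying $\sum_{c'(w)=i} d'(w) = N'$.

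The geometric input is that $\Gamma$ is bipartite for both labellings: every edge of $\mathcal{T}$ joins a $\pi_1(M_1)$-vertex to a $\pi_1(M_2)$-vertex, since its image in $\mathcal{T}/G$ is the unique edge, so every edge of $\Gamma$ joins $c^{-1}(1)$ to $c^{-1}(2)$; the same holds for $c'$. A connected bipartite graph has a unique bipartition, so $\{c^{-1}(1),c^{-1}(2)\} = \{c'^{-1}(1),c'^{-1}(2)\}$, and after relabelling $M_1', M_2'$ we may assume $c = c'$. Now compare volumes: for a closed hyperbolic manifold $M$ the volume $\Vol(M)$ depends only on $\pi_1(M)$ --- by Gauss--Bonnet in dimension two and by Mostow rigidity in dimension $\geq 3$ --- and a degree-$d$ cover has volume $d\,\Vol(M)$. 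Hence for each vertex $w$ the number $\Vol(\Stab_L(w))$ is well defined and equals both $d(w)\,\Vol(M_{c(w)})$ and $d'(w)\,\Vol(M_{c(w)}')$ (which forces $\dim M_i = \dim M_i'$). Summing $\Vol(\Stab_L(w))$ over the vertices with $c(w)=1$ gives $N\,\Vol(M_1) = N'\,\Vol(M_1')$, and similarly $N\,\Vol(M_2) = N'\,\Vol(M_2')$. Finally, the number of $L$-orbits of edges of $\mathcal{T}$ equals $N$, computing it from the free $G$-action on the single edge-orbit of $\mathcal{T}$, and equals $N'$ from the free $G'$-action; hence $N = N'$, and therefore $\Vol(M_i) = \Vol(M_i')$ for $i = 1,2$.

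I expect the main obstacle to be the input drawn from Theorem~\ref{thm_FINAL}: producing the common finite-index subgroup $L$ inside a single $\Isom(Y)$ together with the canonically attached tree $\mathcal{T}$ on which $G$, $G'$ and $L$ all act compatibly --- that is, upgrading the mere existence of a common model geometry to weak commensurability plus a compatible splitting --- is the substantive part, and it is what lets the otherwise soft bookkeeping pin down the volumes (note that abstract commensurability alone, via Whyte's classification in the surface case, pins down only $\Vol(M_1)+\Vol(M_2)$). Everything after that point is standard: Bass--Serre and Kurosh theory, orbit counting, uniqueness of the bipartition of a connected bipartite graph, and the topological invariance of hyperbolic volume.
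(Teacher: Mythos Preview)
Your argument is correct but takes a genuinely different route from the paper's. The paper proves this theorem in Section~5 without invoking Theorem~\ref{thm_FINAL} at all: after passing to the ideal model geometry $Y$ via Proposition~\ref{prop:newModelSpace2}, it fixes a one-ended vertex space $Y_v \cong \mathbb{H}^n_{\mathbb{F}}$ and works with $K_v = \Stab_H(Y_v)/\Fix_H(Y_v)$, which acts geometrically on $Y_v$. Both $\pi_1(M_1)$ and $\pi_1(M_1')$ embed in $K_v$ as finite-index subgroups, and the key point (Lemma~\ref{lem:peripheralTransitivity}) is that each acts \emph{transitively} on the set $\mathcal{P}_v$ of edge-attachment points in $Y_v$; the index of each in $K_v$ is then read off from the local degree of the orbifold cover $M_i \to K_v \backslash Y_v$ at the single image point of $\mathcal{P}_v$, and this local degree is visibly the same for both (Lemma~\ref{lemma:same_degree}). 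Hence $\Vol(M_1) = d \cdot \Vol(K_v \backslash Y_v) = \Vol(M_1')$ directly, with no appeal to weak commensurability or the Leighton-type machinery. Your approach instead treats Theorem~\ref{thm_FINAL} as a black box to produce a common finite-index subgroup $L$, and then does combinatorial bookkeeping (bipartiteness of the quotient graph plus edge-counting to force $N=N'$) together with Mostow rigidity; this works, but it imports all of Section~\ref{sec:commonCover} where the paper's argument is self-contained and elementary. One small caveat: your assertion that the $G$-action on $\mathcal{T}$ has one edge orbit and two vertex orbits is precisely what is established in Lemma~\ref{lem:peripheralTransitivity}, and it requires the case analysis given there (ruling out loops, spurs, and subdivided edges in $G \backslash \mathcal{T}$); the justification ``$G$ has no free factor'' alone does not pin this down, since $\mathcal{T}$ is a fixed tree coming from the construction of $Y$ rather than an abstract Bass--Serre tree you are free to reduce.
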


     The results in this paper provoke the following questions:
    
    \begin{question}
      If $H$ and $H'$ are one-ended residually finite hyperbolic groups, is $H * H'$ action rigid? 
    \end{question}

    \begin{question}\label{ques_common_simplicial}
     If $H$ and $H'$ are one-ended residually finite hyperbolic groups that act geometrically on the same simplicial complex, are $H$ and $H'$ abstractly commensurable? 
    \end{question}

     Both questions are false in general outside of the hyperbolic setting by work of Burger--Mozes~\cite{burgermozes}. The case that $H$ and $H'$ are closed hyperbolic manifold groups is handled in Proposition~\ref{prop:simp_ac}.

    A closed hyperbolic $n$-manifold group is not quasi-isometrically rigid for all $n \geq 3$, although the {\it class} of such groups is quasi-isometrically rigid in the sense that any group quasi-isometric to $\mathbb{H}_{\mathbb{F}}^n$ does in fact act geometrically on $\mathbb{H}_{\mathbb{F}}^n$. Moreover, a closed hyperbolic manifold group is not action rigid. 
    The following corollary states that when one starts taking connect sums of closed hyperbolic $3$-manifolds, the resulting fundamental groups become action rigid.
    
    \begin{coro}
     Let $M$ be a finite non-trivial connected sum of closed hyperbolic $3$-manifolds.
     Then $\pi_1(M)$ is action rigid.
    \end{coro}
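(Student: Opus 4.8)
The plan is to reduce to Theorem~\ref{thm_first}. Write $M = N_1 \# \cdots \# N_k$ as a connected sum of closed hyperbolic $3$-manifolds; non-triviality lets us take $k \geq 2$ with no $N_i$ a sphere, so by Seifert--van Kampen $G := \pi_1(M) \cong \pi_1(N_1) * \cdots * \pi_1(N_k)$ is a non-trivial free product. Each $\pi_1(N_i)$ is a uniform lattice in $\Isom(\mathbb{H}^3)$, and $\mathbb{H}^3$ is a rank-$1$ symmetric space, so $G$ has exactly the shape required by Theorem~\ref{thm_first} (with trivial free part). Hence, once we know that every group $G'$ sharing a common model geometry with $G$ is residually finite, Theorem~\ref{thm_first} forces $G$ and $G'$ to be abstractly commensurable, and action rigidity of $\pi_1(M)$ follows from the definition. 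So the whole task is to verify that residual finiteness is automatic in this setting.

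Suppose $G$ and $G'$ act geometrically on a common proper geodesic metric space. By the Milnor--\v{S}varc lemma, $G'$ is finitely generated and quasi-isometric to $G$; since $G$ is a free product of hyperbolic groups it is hyperbolic, so $G'$ is hyperbolic and therefore accessible. Write its terminal free-product decomposition as $G' \cong H'_1 * \cdots * H'_m * K$, where each $H'_j$ is a finitely generated one-ended hyperbolic group and $K$ is a free product of finite and infinite cyclic groups; $K$ is residually finite (a free product of residually finite groups), so it suffices to treat the factors $H'_j$. By the quasi-isometry classification of free products of hyperbolic groups of Papasoglu--Whyte~\cite{papasogluwhyte}, each one-ended factor of $G'$ is quasi-isometric to a one-ended factor of $G$; every one-ended factor of $G$ is some $\pi_1(N_i)$, which acts geometrically on $\mathbb{H}^3$, so each $H'_j$ is quasi-isometric to $\mathbb{H}^3$.

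The key input is the quasi-isometric rigidity of $\mathbb{H}^3$ (Sullivan, Tukia): a finitely generated group quasi-isometric to $\mathbb{H}^3$ acts geometrically on $\mathbb{H}^3$, hence contains a torsion-free finite-index subgroup isomorphic to the fundamental group of a closed hyperbolic $3$-manifold, and is thus \emph{virtually linear}. A finitely generated linear group is residually finite by Malcev's theorem, and residual finiteness passes from a finite-index subgroup to the whole finitely generated group (intersect the finitely many subgroups of a fixed index to obtain characteristic, hence normal, finite-index subgroups avoiding any prescribed nontrivial element). So each $H'_j$ is residually finite, a free product of residually finite groups is residually finite, and therefore $G'$ is residually finite; Theorem~\ref{thm_first} then gives that $G$ and $G'$ are abstractly commensurable.

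I do not expect a genuine obstacle here: the mathematical content lives in Theorem~\ref{thm_first}, and the point of the corollary is simply that when every one-ended free factor of $G$ is quasi-isometric to $\mathbb{H}^3$ -- hence, after the Papasoglu--Whyte matching, so is every one-ended factor of any partner $G'$ -- the residual finiteness hypothesis comes for free, this being the single place where the general statement still relies on an unproven assumption. The one point that needs slight care is the final step: one must use that residual finiteness is inherited by a finite-index \emph{over}group of a finitely generated residually finite group, not merely by finite-index subgroups.
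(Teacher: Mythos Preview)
Your overall strategy matches the paper's: reduce to Theorem~\ref{thm_first} by showing that any $G'$ sharing a model geometry with $G$ is residually finite, and do this by analysing the one-ended pieces, which Papasoglu--Whyte forces to be quasi-isometric to $\mathbb{H}^3$. The gap is in the step where you assert that a group $H'_j$ quasi-isometric to $\mathbb{H}^3$ ``contains a torsion-free finite-index subgroup isomorphic to the fundamental group of a closed hyperbolic $3$-manifold.'' What Tukia (or Sullivan) actually gives is a homomorphism $H'_j \to \Isom(\mathbb{H}^3)$ with \emph{finite kernel} $F$ and cocompact lattice image $\Lambda$. Selberg's lemma applies to $\Lambda$, not to $H'_j$: the preimage in $H'_j$ of any torsion-free finite-index subgroup of $\Lambda$ still contains $F$. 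Producing a finite-index subgroup of $H'_j$ that avoids $F$ is exactly the statement that $H'_j$ is residually finite, so your argument is circular at this point. This is not a technicality --- it is precisely the obstruction discussed immediately after Theorem~\ref{thm_first}, and Section~\ref{sec:hypotheticalCounterexample} shows that a non-residually-finite finite extension of a rank-$1$ lattice, were one to exist, would genuinely break the conclusion.

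The paper closes this gap with a deeper input: each one-ended vertex group of $G'$ surjects a closed hyperbolic $3$-manifold group with finite kernel (Tukia), so by Bergeron--Wise the group $G'$ acts geometrically on a proper hyperbolic $\CAT(0)$ cube complex, and Agol's theorem then gives that $G'$ is virtually special, hence residually finite. In other words, residual finiteness here ultimately rests on the resolution of the Virtual Haken Conjecture, not on Malcev and Selberg alone. A smaller point: you should work with the Stallings--Dunwoody decomposition rather than a Grushko free-product decomposition, since in the presence of torsion a freely indecomposable factor need not be one-ended (e.g.\ $\mathrm{SL}_2(\mathbb{Z})$), and Papasoglu--Whyte is stated for the former.
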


    \begin{proof}
    Suppose that $G'$ shares a common model geometry with $\pi_1(M)$.
    The result follows from Theorem~\ref{thm_first} if we can show that $G'$ is residually finite.
    As $G$ has infinitely many ends, so does $G'$, and the one-ended vertex groups in its Stallings-Dunwoody decomposition will be quasi-isometric to $\mathbb{H}^3$ by~\cite{papasogluwhyte}.
     Any group quasi-isometric to $\mathbb{H}^3$ surjects onto a closed hyperbolic $3$-manifold group with finite kernel by~\cite{tukia}.
     Thus, by~\cite{BergeronWise} we know that $G'$ acts geometrically on a proper, hyperbolic CAT(0) cube complex, so by~\cite{agol} we know $G'$ is virtually special and therefore residually finite.
    \end{proof}

    The corollary provokes the following related question:

     \begin{question}
      Is the fundamental group of a compact, non-geometric $3$-manifold action rigid?
     \end{question}

   As explained in the next two subsections, the proof of the theorems above has two main steps, each of independent interest. The first step is geometric; we show that geometric actions of two infinite-ended non-free hyperbolic groups on an arbitrary common model space can be promoted to geometric actions on a model space with more structure. This strategy to prove action rigidity was also employed by Mosher--Sageev--Whyte~\cite{moshersageevwhyteI} for the virtually free groups defined above and by the authors~\cite{starkwoodhouse} for the class of simple surface amalgams. The second step is topological; we prove a generalization of Leighton's graph covering theorem~\cite{Leighton}, following the methods developed by Woodhouse~\cite{Woodhouse18}, and Shepherd and Gardam--Woodhouse~\cite{ShepherdGardamWoodhouse}.    
     
   \subsection{Common simplicial and hyperbolic model geometries}  
    A central theorem we employ to obtain the results above is the following, which can be viewed as a generalization of the work of Mosher--Sageev--Whyte~\cite{moshersageevwhyteI} on virtually free groups. 
    We say that a model geometry $Y$ for $G$ \emph{decomposes as a tree of spaces} if there is a $G$-equivariant map $p: Y \rightarrow T$, where $T$ is a simplicial tree, and the preimage of a vertex $Y_v := p^{-1}(v)$ is a \emph{vertex space}, and the preimage of the interior of an edge decomposes as a product of the \emph{edge space} and open interval $Z_e \times (0,1) := p^{-1}(e^\circ)$.
    
    \begin{thm} \label{intro_nicespace}
     Let $G$  be a hyperbolic group with infinitely many ends, and suppose $G$ is not virtually free. 
     Let $X$ be a model geometry for $G$ and let $H = \Isom(X)$.
     Then, there exists a locally finite, simply connected simplicial complex $Y$ such that: 
     \begin{enumerate}
      \item there is an $H$-action on $Y$;
      \item  $Y$ decomposes as an $H$-equivariant tree of spaces with each edge space a point and each vertex space either one-ended or a point; 
     
      \item there is a quasi-isometry $f: X \rightarrow Y$ that quasi-conjugates the respective $H$ actions. That is to say, there is a constant $B> 0$ such that
     \[
      d_Y(h \cdot f(x), f(h \cdot x)) < B.
     \]
     \end{enumerate}
     As a consequence, $Y$ is a model geometry for $G$, and the one-ended vertex spaces in $Y$ are quasi-isometric to the one-ended vertex groups in the Stallings--Dunwoody decomposition of $G$.
    \end{thm}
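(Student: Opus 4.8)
The plan is to generalize the strategy of Mosher--Sageev--Whyte~\cite{moshersageevwhyteI} for virtually free groups, and of the authors~\cite{starkwoodhouse} for simple surface amalgams, replacing the tree model used there by a tree of one-ended spaces. Since $G$ is hyperbolic it is finitely presented, hence accessible, so it admits a Stallings--Dunwoody splitting: a finite graph-of-groups decomposition with finite edge groups in which each vertex group is finite or one-ended, with Bass--Serre tree $T_{SD}$. As $G$ is not virtually free at least one vertex group is one-ended, and by Papasoglu--Whyte~\cite{papasogluwhyte} these one-ended factors control the coarse geometry of $X$, which is quasi-isometric to $G$.

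The geometric core is to transport the coarse separation structure of $X$ onto a genuine simplicial tree $T$ carrying an action of $H = \Isom(X)$. One considers the family of \emph{deep coarse half-spaces} of $X$: subsets $W \subseteq X$ such that neither $W$ nor $X \setminus W$ lies in a bounded neighbourhood of the other and such that the coarse frontier of $W$ is bounded. Because $X$ is quasi-isometric to the accessible group $G$, this family is nested up to bounded error and, via a Dunwoody/Sageev-type structure-tree construction, yields a simplicial tree $T$. Since $H$ acts on $X$ by isometries it preserves this coarse structure, hence acts on $T$; one then checks that the restriction of this action to $G \le H$ is equivariantly isomorphic to $G \curvearrowright T_{SD}$, that $H \curvearrowright T$ is cocompact, that edge stabilizers and finite-type vertex stabilizers are coarsely bounded subgroups of $H$, and that a one-ended-type vertex stabilizer $H_v$ acts properly and cocompactly on a coarsely well-defined one-ended piece $X_v \subseteq X$ — coarsely the preimage of $v$ under the coarse projection $X \to T$ — inside which the one-ended Stallings--Dunwoody vertex group $G_v = G \cap H_v$ sits as a uniform lattice.

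Next I would assemble $Y$ as an $H$-equivariant tree of spaces over $T$: every edge space and every finite-type vertex space is a point, and for a one-ended-type vertex $v$ the vertex space $Y_v$ is a locally finite, simply connected simplicial complex on which $H_v$ acts geometrically and which is $H_v$-equivariantly quasi-isometric to $X_v$ — built, for instance, as a Rips complex, with local finiteness and $H_v$-equivariance obtained from the lattice $G_v$ as in~\cite{moshersageevwhyteI, starkwoodhouse}. Since $T$ has finitely many $H$-orbits of cells, these pieces glue $H$-equivariantly along the point edge spaces, and the resulting $Y$ is locally finite (the attaching points in $Y_v$ form a finite union of $H_v$-orbits, hence are uniformly discrete) and simply connected (a tree of simply connected spaces glued along points), which gives (1) and (2). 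The quasi-isometry $f \colon X \to Y$ quasi-conjugating the actions is obtained by gluing the piecewise quasi-isometries $X_v \to Y_v$, chosen compatibly across $T$, and collapsing the bounded edge pieces of $X$ onto the point edge spaces; it is a quasi-isometry by the standard combination argument for spaces coarsely fibred over a common tree with quasi-isometric vertex fibres and uniformly bounded edge fibres, and it quasi-conjugates the $H$-actions because the only failure of equivariance is through bounded choices of orbit representatives, so $d_Y(h \cdot f(x), f(h \cdot x))$ is uniformly bounded, giving (3). Finally, $Y$ is a model geometry for $G$: it is a proper geodesic simplicial complex, the $G$-action is cocompact since $T/G$ is finite and each $Y_v$ is $G_v$-cocompact, and proper since vertex stabilizers are finite; and each one-ended $Y_v$ is quasi-isometric to $X_v$, on which $G_v$ acts geometrically, so $Y_v$ is quasi-isometric to the corresponding one-ended vertex group of the Stallings--Dunwoody decomposition.

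The main obstacle is the construction of $T$: producing an honest, $H$-equivariant simplicial tree — not merely a quasi-tree or an asymptotic object — from the coarse separation structure of an arbitrary model space $X$, and verifying that it simultaneously restricts to the Stallings--Dunwoody action of $G$, has coarsely bounded edge stabilizers, and yields a locally finite total space $Y$. This requires marrying Dunwoody-style accessibility with geometric control on $X$ and carries most of the technical weight. A secondary difficulty is arranging the Rips-complex vertex spaces $Y_v$ to be simultaneously locally finite and $H_v$-equivariant when $H_v$ is non-discrete, which is handled by exploiting the uniform lattice $G_v \le H_v$.
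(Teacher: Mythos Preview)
Your overall strategy and the paper's differ in the order of operations, and that difference is where your gaps lie. The paper does \emph{not} first extract a tree from $X$ and then build vertex spaces over it. Instead it first produces a locally finite, simply connected simplicial $2$-complex $\hat{\cY}$ on which all of $H$ acts, and only afterwards obtains the tree by applying Dunwoody's tracks theorem to $\hat{\cY}$. Concretely: the non-atomic components $S_\alpha$ of $\partial X$ are permuted by $H$, hence so are their weak convex hulls $X_\alpha$ and the pairwise intersections $U_\alpha^\beta = N_r(X_\alpha)\cap N_r(X_\beta)$; the $1$-skeleton of the nerve of the cover $\{U_\alpha^\beta\}$ is a locally finite graph $\cY$ with an $H$-action quasi-conjugate to the action on $X$, and $\hat{\cY}$ is its Rips $2$-skeleton. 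Dunwoody's theorem then yields an $\Isom(\hat{\cY})$-invariant family of finite tracks whose complementary pieces have at most one end; the dual tree and the associated collapsing map give $Y$ and its tree-of-spaces structure in one stroke.

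This ordering neutralises exactly the two obstacles you flag. You try to build the tree $T$ directly from ``deep coarse half-spaces'' of the metric space $X$; but structure-tree constructions \`a la Dunwoody or Sageev need either a simplicial model or a genuine wallspace, and turning coarse cuts in an arbitrary proper hyperbolic space into an honest $H$-equivariant simplicial tree with the correct stabiliser behaviour is essentially reproving accessibility---precisely the work the paper offloads to the tracks theorem once a simplicial model is in hand. (Your claim that the $G$-action on $T$ recovers $T_{SD}$ on the nose is also too strong: Stallings--Dunwoody trees are not unique, only their one-ended vertex groups are.) Second, your plan to manufacture each $Y_v$ as a Rips complex that is both locally finite and $H_v$-equivariant runs into the problem that $H_v$ may be non-discrete: a Rips complex on a $G_v$-orbit is $G_v$- but not $H_v$-equivariant, while $H_v$-orbits need not be discrete. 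The paper never faces this, because the $H$-action on $\hat{\cY}$ is global from the start and the vertex spaces of $Y$ are simply the one-ended complementary pieces of the tracks, inheriting their $H_v$-actions automatically.

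In short, your outline is a plausible alternative architecture, but the two steps you single out as the hard ones are genuinely hard in your ordering and are not carried out; the paper's route---boundary $\to$ nerve $\to$ Rips complex $\to$ Dunwoody tracks $\to$ collapse---sidesteps both by postponing the tree until an $H$-simplicial model already exists.
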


    The assumption that the group $G$ is infinite-ended is necessary in general. For example, we show in Proposition~\ref{prop:simp_ac} that, while non-commensurable closed hyperbolic manifold groups have a common model geometry, $\Hy^n_{\mathbb{F}}$, they cannot act geometrically on the same simplicial complex. 
    
    A hyperbolic group admits a {\it Stallings--Dunwoody decomposition} \cite{stallings,dunwoody85} as a finite graph of groups with finite edge groups and vertex groups with at most one end.
    We note that while the graph of groups decomposition is not necessarily unique, the one-ended vertex groups are unique (up to conjugation).
    While a quasi-isometry need not induce an isomorphism from a Bass-Serre tree for a Stallings--Dunwoody decomposition of $G$ to a Bass-Serre tree for a Stallings--Dunwoody decomposition of $G'$, the common model geometry given in Theorem~\ref{intro_nicespace} defines an isomorphism from a Bass-Serre tree for $G$ to a Bass-Serre tree for $G'$. This isomorphism is a crucial component in the proof of action rigidity for free products of hyperbolic manifold groups. 
    
    To prove Theorem~\ref{intro_nicespace} in Section~\ref{sec:common_simp_geo} we use the visual boundary of hyperbolic groups with infinitely many ends. Each conjugate of a one-ended vertex group corresponds to a component in the boundary. We build a locally finite simplicial complex admitting geometric actions by $G$ and $G'$ by considering the set of {\it weak convex hulls} of these components. We take $R$-neighborhoods of these weak convex hulls and define a graph with vertices corresponding to certain intersections of these subsets. 
    We apply the Rips complex construction to this graph to obtain a simply connected model geometry for both $G$ and $G'$. Finally, we use Dunwoody's tracks~\cite{dunwoody85} to collapse this simplicial complex to the desired tree of spaces described in Theorem~\ref{intro_nicespace}. 

    In the case that the one-ended vertex groups of $G$ and $G'$ are closed, real hyperbolic manifold groups, we apply the work of Tukia~\cite{tukia}, Hinkkanen~\cite{hinkkanen85,hinkkanen90} and Markovic~\cite{markovic06} to replace the one-ended vertex spaces in the simplicial complex $Y$ with copies of $\fhyp$, for varying $n>1$. In the complex, quarternionic and Cayley hyperbolic cases we apply corresponding results due to Chow and Pansu~\cite{Chow96, Pansu89}. See Section~\ref{sec:hyp_model_geo}.

    \subsection{Symmetry Restricted Leighton's Theorem} \label{sec:SymmetryRestrictedLeigthon}
    Having promoted the common model geometry to a tree of spaces $X$ constructed from copies of $\mathbb{H}^n_{\mathbb{F}}$ and simplicial graphs, we are able to formulate the problem of showing that groups $G, G' \leqslant \Isom(X)$ are (weakly) commensurable in topological terms.
    Let $\chi = G \backslash X$ and $\chi' = G' \backslash X$. 
    Both $\chi $ and $\chi'$ are graphs of spaces with respective fundamental groups $G$ and $G'$, and isomorphic universal covers.
    To show that $G$ and $G'$ are commensurable it therefore suffices to prove that $\chi$ and $\chi'$ have homeomorphic finite covers (see Theorem~\ref{thm:weakCommensurabilityInIdeal}).
    Note that if $G$ and $G'$ were one-ended with $X \cong \mathbb{H}^3$ and a trivial graph of groups decomposition, then constructing a common finite cover would be impossible.
    On the other hand, if $\chi$ and $\chi'$ were both graphs, with no one-ended vertex spaces isometric to $\mathbb{H}^n_\mathbb{F}$, then the existence of a common finite cover is Leighton's graph covering theorem~\cite{Leighton}.
    Our argument shows that our situation is closer to the latter than the former.
    
    We take a moment to compare this problem with work of Behrstock--Januszkiewicz--Neumann~\cite{behrstockjanuszkiewiczneumann} concerning free products of free abelian groups. 
    In contrast to the groups considered here, they prove that if $G$ and $G'$ are quasi-isometric free products of free abelian groups, then $G$ and $G'$ are abstractly commensurable. 
    Note that a finite-index subgroup of $\mathbb{Z}^n$ is isomorphic to $\mathbb{Z}^n$; equivalently, a finite-sheeted cover of a torus is still a torus.
    The fact that the volume of a closed hyperbolic manifold increases when finite covers are taken, and that the groups do not remain isomorphic, is a source of subtlety and difficulty.
    
    A key ingredient we employ in the proof of Theorem~\ref{thm:weakCommensurabilityInIdeal} is a generalization of Leighton's graph covering theorem to lattices inside {\it symmetry restricted} automorphism groups of trees. 
    Let $T$ be a locally finite simplicial tree with cocompact automorphism group $G = \Aut(T)$.
    We assume, after possibly subdividing edges or passing to an index-two subgroup of $G$ (see~\cite[Proposition 6.3]{Bass93}), that $G$ acts on $T$ without edge inversions.
    A \emph{free uniform lattice} $F \leqslant G$ is a finitely generated free subgroup that acts freely and cocompactly on $T$. 
    In the language of covering spaces, such a lattice corresponds to a finite graph $X$ and a covering map $T \rightarrow X$, where $F$ is the group of deck transformations given by $\pi_1(X)$.

    Leighton's Graph Covering Theorem~\cite{Leighton} states that any two free uniform lattices $F, F' \leqslant G$, are \emph{weakly commensurable in $G$}. That is, there exists some $g \in G$ such that $F^g \cap F'$ is a finite-index subgroup of both $F^g$ and $F'$ in $G$, where $F^g = gFg^{-1}$ (see Definition~\ref{defn:commensurabilityRelations}).
    In the language of covering spaces, this condition is equivalent to saying that any pair of finite graphs $X$ and $X'$ with isomorphic universal covers have isomorphic finite-sheeted covers.    
    Subsequent to Leighton's original proof, Bass--Kulkarni~\cite{BassKulkarni90} revisited the problem, setting it in the context of Bass-Serre theory and addressing the issue of lattice existence.
    Recently, the second author~\cite{Woodhouse18} gave a new proof, using Haar measure to solve certain gluing equations, that generalizes Leighton's theorem to \emph{graphs with fins} and has applications to a quasi-isometric rigidity result for free groups with line patterns.
    
    Walter Neumann posed a generalization of Leighton's theorem as an open problem.
    The motivation for this generalization was potential applications to quasi-isometric rigidity questions, such as generalizing Behrstock-Neumann's results for non-geometric $3$-manifolds~\cite{behrstockneumann12}.
    Shepherd and, independently (but in the appendix of the same paper), Gardam and the second author~\cite{ShepherdGardamWoodhouse}, recently solved Neumann's problem as follows, christening the generalization \emph{symmetry restricted Leighton's theorem}.
    
    Fix some $R > 0$.
    Given a vertex $v \in VT$, let $B_R(v)$ denote the closed $R$-neighborhood of $v$.
    For an element $g \in G$ and a vertex $v \in VT$, let $g_v : B_R(v) \rightarrow B_R(gv)$ denote the restriction of $g$ to $B_R(v)$.
    
    \begin{defi} \label{def:symmres}
     The \emph{$R$-symmetry restricted closure} of $H \leqslant G$ is the closed subgroup
     \[
     \mathscr{S}_R(H):= \{ g \in G \mid \forall v \in VT, \exists h \in H \textrm{ s.t. } g_v = h_v : B_R(v) \rightarrow B_R(gv) \}
     \]
     A subgroup $H \leq G$ is {\it $R$-symmetry restricted} if $H = \mathscr{S}_R(H)$.
    \end{defi}

    \begin{rema}
     Determining if a closed subgroup $H \leqslant \Aut(T)$ is a symmetry restricted group can be a subtle question.
     As discussed in~\cite[Remark A.3]{ShepherdGardamWoodhouse}, the group $\textrm{SL}_2(\mathbb{Q}_p)$ acts on its Bruhat-Tits building, which is a locally finite tree $T$, but it is not a symmetry restricted subgroup of $\textrm{Aut}(T)$ for any~$R$.
    \end{rema}
    
    \begin{thm}\cite{ShepherdGardamWoodhouse} \label{thm:symmetryRestrictedLeighton}
     Let $F, F'$ be free uniform lattices in $G$, contained in an $R$-symmetry restricted subgroup $H \leqslant G$.
     Then $F$ and $F'$ are weakly commensurable in $H$. That is to say, there exists $h \in H$ such that $F^h \cap F'$ is a finite-index subgroup of $F^h$ and $F'$. 
    \end{thm}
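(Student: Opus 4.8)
The plan is to translate the statement into a common-finite-cover problem for a pair of finite graphs carrying the $R$-local data recorded by $H$, and then to build the cover by solving a system of gluing equations via Haar measure, following the strategy of Woodhouse~\cite{Woodhouse18} and its adaptation in~\cite{ShepherdGardamWoodhouse}. First I would set up the dictionary. Since $F$ and $F'$ act freely and cocompactly on $T$, the quotients $X := F \backslash T$ and $X' := F' \backslash T$ are finite graphs with $\pi_1(X) \cong F$, $\pi_1(X') \cong F'$, and a common universal cover $T$. Because $F, F' \leqslant H$ and $H = \mathscr{S}_R(H)$, membership of an automorphism of $T$ in $H$ is detected purely by its restrictions to $R$-balls, so I would record on $T$ the ``$R$-type'' data: to each vertex (resp.\ edge) assign the $H$-orbit of the pointed ball $B_R(v)$ together with the $H$-charts realising it. This descends to decorations of $X$ and $X'$, and the desired conclusion that $F$ and $F'$ are weakly commensurable in $H$ is equivalent to: the decorated graphs $X$ and $X'$ admit finite covers $\widetilde X \to X$, $\widetilde X' \to X'$ together with a decoration-preserving isomorphism $\widetilde X \cong \widetilde X'$ — decoration-preservation being exactly what forces the induced automorphism of $T$ to agree $R$-locally with elements of $H$ and hence, since $H = \mathscr{S}_R(H)$, to lie in $H$.

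Second, I would construct the common cover. Form the finite ``correspondence complex'' $P$ whose cells are pairs of a cell-type of $X$ and a cell-type of $X'$ that are realised simultaneously by a single cell of $T$ with matching $R$-type, carrying the induced $H$-chart; then $P$ maps to both $X$ and $X'$ but typically fails to be a covering because the fibre multiplicities over the two sides are unbalanced. The heart of the argument is to find positive integer weights on the cells of $P$ (equivalently, to pass to a weighted blow-up) so that at every vertex the weighted link matches the link of $X$ on one side and of $X'$ on the other; these balancing conditions form a finite system of linear equations, and any positive integral solution yields a genuine common finite cover whose gluing data is $H$-local. To produce a solution I would use the Haar measure on $H$ — a closed, hence locally compact, subgroup of $\Aut(T)$ — to define a canonical \emph{real} positive solution by measuring cylinder sets in the relevant double-coset space of $H$, and then invoke density of $\mathbb{Q}$ in $\mathbb{R}$ together with clearing denominators to obtain a positive integral solution. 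The $R$-symmetry restriction is precisely what makes this work: it guarantees that the only constraints are $R$-local, so the Haar-measure solution respects the decorations and the resulting lifting isomorphism lands in $H$ rather than merely in $\Aut(T)$.

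The step I expect to be the main obstacle is verifying that the gluing equations extracted from the decorated correspondence complex are consistent and that the Haar-measure construction genuinely solves \emph{these} equations — i.e.\ that averaging over $H$ is compatible with the $R$-type bookkeeping and creates no mismatch along the boundary spheres of the $R$-balls. Concretely, one must show that the closures $\overline F, \overline F'$ inside $H$ act on the $R$-ball data with enough transitivity that the ``type count'' really is the correct local invariant, and that Woodhouse's ``graphs with fins'' device, used to absorb the discrepancy between the two sides, can be carried out decoration-equivariantly so that the auxiliary fins still respect $H$. Once this consistency is established, passing from a real solution to an integral one, and from an integral solution to an actual finite cover, is routine, and weak commensurability of $F$ and $F'$ in $H$ follows.
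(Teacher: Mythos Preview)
The present paper does not prove this theorem at all: it is stated with a citation to \cite{ShepherdGardamWoodhouse} and used as a black box in Section~\ref{sec:commonCover}. There is therefore no ``paper's own proof'' to compare your attempt against. Your outline is a reasonable high-level sketch of the Haar-measure approach of \cite{Woodhouse18} and its symmetry-restricted extension in \cite{ShepherdGardamWoodhouse}, and that is indeed where the proof lives; but for the purposes of reviewing this paper you should simply cite the result rather than attempt to reprove it.
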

   
     Section~\ref{sec:commonCover} of the present paper is devoted to the challenge of arranging covers of the spaces $\chi = G \backslash X$ and $\chi' = G' \backslash X$ so that we are in a situation where Theorem~\ref{thm:symmetryRestrictedLeighton} can be applied. A key point is that Theorem~\ref{thm:symmetryRestrictedLeighton} applies only to locally finite trees and to groups that act freely and cocompactly on such a tree. While the groups $G$ and $G'$ naturally act on the Bass--Serre tree associated to the space $X$, this tree is not locally finite, and the actions are not free. In Section~\ref{sec:commonCover}, we produce a series of covering space arguments to find a common (infinite-sheeted) cover $\breve{\chi}$ of the spaces $\chi$ and $\chi'$ so that the underlying tree is locally finite and so that $\pi_1(\chi)$ and $\pi_1(\chi')$ virtually act freely on the underlying tree by deck transformations. 

    \subsection*{Acknowledgments} The authors are thankful for helpful discussions with Tullia Dymarz.
    We thank Kevin Schreve for explaining Remark~\ref{rem:proportionality}.
    The first author was supported by the Azrieli Foundation, was supported in part at the Technion by a Zuckerman Fellowship, and was partially supported by the NSF RTG grant $\#$1840190. The second author was supported by the Israel Science Foundation (grant 1026/15).

    \section{Preliminaries}

    We will use the following notation throughout the paper. 
    
    \begin{nota}
     If $A \subset X$, let $N_R(A)$ denote the open $R$-neighborhood of $A$ in $X$. Let $B_R(x)$ denote the closed ball of radius $R$ around a point $x$. If $H \leq G$ and $g \in G$, let $H^g := gHg^{-1}$. 
    \end{nota}

     The next elementary lemma can be deduced easily from standard techniques; see \cite{bowditch06,bridsonhaefliger}. 

    \begin{lemma}\label{lemma:subgroup}
        Let $X$ be a proper metric space, and let $G$ be a group which acts properly on~$X$.
        If $H \leqslant G$ acts cocompactly on $X$, then $G$ acts on $X$ cocompactly, and $H$ is a finite-index subgroup of $G$.
    \end{lemma}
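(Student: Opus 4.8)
The plan is to deduce the statement from the Milnor--Schwarz lemma together with a standard argument comparing fundamental domains. First I would observe that since $H \leqslant G$ acts cocompactly on the proper geodesic (or at least proper length) space $X$, the Milnor--Schwarz lemma applies to $H$: pick a basepoint $x_0 \in X$, then $H$ is finitely generated and the orbit map $h \mapsto h \cdot x_0$ is a quasi-isometry $H \to X$. In particular $X$ is coarsely connected and has the coarse geometry of the Cayley graph of $H$.

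Next I would use properness of the $G$-action to control the larger group. Choose a compact set $K \subseteq X$ with $H \cdot K = X$; enlarging $K$ we may assume $K = \overline{N_r(x_0)}$ for some $r > 0$, so $K$ is a closed ball. Since $G$ acts on $X$ and $X = H \cdot K \subseteq H \cdot K$, certainly $G \cdot K = X$ as well, so $G$ acts cocompactly on $X$. The key point is to bound the index $[G:H]$. For any $g \in G$, the point $g \cdot x_0$ lies in $X = H \cdot K$, so there is $h \in H$ with $d(g \cdot x_0, h \cdot x_0) \leqslant \operatorname{diam}(K) =: D$; equivalently $d(h^{-1} g \cdot x_0, x_0) \leqslant D$. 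Thus every coset $Hg$ has a representative $g' = h^{-1}g$ with $g' \cdot x_0 \in \overline{B_D(x_0)}$.

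The main obstacle — really the only substantive point — is to show there are only finitely many such coset representatives, and this is exactly where properness is used. Consider the set $S = \{ g' \in G : d(g' \cdot x_0, x_0) \leqslant D \}$. By properness of the $G$-action on the proper space $X$, the set $\overline{B_D(x_0)}$ is compact, and $\{ g' \in G : g' \cdot \overline{B_D(x_0)} \cap \overline{B_D(x_0)} \neq \emptyset \}$ is finite; since $g' \in S$ forces $g' \cdot x_0 \in \overline{B_D(x_0)} \cap g' \cdot \overline{B_D(x_0)}$, we get $S$ finite. As every right coset of $H$ in $G$ meets $S$, the index $[G:H]$ is at most $|S| < \infty$. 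Hence $H$ has finite index in $G$, and since $X$ is proper and $H$ (a fortiori $G$) acts properly and now cocompactly, $G$ acts geometrically on $X$. This completes the proof; the two claims are $G$ acts cocompactly (immediate from $H \leqslant G$ and $H$ cocompact) and $[G:H] < \infty$ (the finiteness argument above).
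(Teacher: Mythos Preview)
Your argument is correct, and it is the standard one: the paper does not actually supply a proof but simply says the lemma ``can be deduced easily from standard techniques'' and cites Bowditch and Bridson--Haefliger. Your fundamental-domain argument (every right coset $Hg$ has a representative moving $x_0$ into a fixed compact ball, and properness makes the set of such representatives finite) is precisely the kind of argument those references contain.

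One small comment: the opening paragraph invoking the Milnor--Schwarz lemma is extraneous and does not quite match the hypotheses, since the lemma only assumes $X$ is a proper metric space, not a geodesic or length space. You do not use it anywhere in the actual argument, so you should simply delete that paragraph; the proof stands on the second and third paragraphs alone. Also, the phrase ``$H$ (a fortiori $G$) acts properly'' in your last sentence has the implication backwards---it is $G$ that acts properly by hypothesis, hence $H$ does as a subgroup---but this is a wording slip, not a mathematical one.
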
     
      
         There exist within the literature various notions of commensurability for subgroups and groups. Terminology can vary, so we make clear here what we mean.
 
   \begin{defi} \label{defn:commensurabilityRelations}

    \begin{enumerate}
     \item A pair of subgroups $\Gamma_1, \Gamma_2 \leqslant G$ are \emph{commensurable in $G$} if their intersection $\Gamma_1 \cap \Gamma_2$ is finite index in both $\Gamma_1$ and $\Gamma_2$.
     \item A pair of subgroups $\Gamma_1, \Gamma_2 \leqslant G$ are \emph{weakly commensurable} if there exists $g \in G$ such that the conjugate $\Gamma_1^g$ is commensurable in $G$ with $\Gamma_2$. In which case we say $g$ \emph{commensurates} $\Gamma_1$ to $\Gamma_2$.
     \item A pair of groups $\Gamma_1$ and $\Gamma_2$ are \emph{abstractly commensurable} if they have isomorphic finite-index subgroups.
     \item A pair of groups $\Gamma_1$ and $\Gamma_2$ are \emph{virtually isomorphic} if there are finite-index subgroups $H_i \leqslant \Gamma_i$ and finite normal subgroups $F_i \trianglelefteq H_i$ such that the quotients $H_1 / F_1$ and $H_2 / F_2$ are isomorphic.
    \end{enumerate}
   \end{defi}

 \subsection{The boundary of a hyperbolic space and the weak convex hull} \label{sec:gos_via_hulls}
 
  We refer the reader to~\cite{bridsonhaefliger} for background on Gromov hyperbolic spaces and their Gromov boundary.
  Let $X$ be a proper geodesic metric space, and suppose it satisfies Gromov's $\delta$-thin triangle condition.
  Associated to $X$ is its \emph{boundary} $\partial X$, a compact topological, metrizable space.
  As a set, $\partial X$ consists of geodesic rays $\gamma : [0, \infty) \rightarrow X$ up to an equivalence, where $\gamma \sim \gamma'$ if their respective images in $X$ have finite Hausdorff distance between them.
  A set in the basis for the topology on $\partial X$ is defined by fixing a ray $\gamma$ and taking all rays based at $\gamma(0)$ that fellow travel with $\gamma$ for some prescribed duration; the associated set of equivalence classes is an open set containing~$[\gamma]$.
  The group of isometries $\Isom(X)$ has an induced action on $\partial X$ by homeomorphisms.
 
  \begin{defi} \label{def:wch}
    Let $X$ be a proper geodesic hyperbolic metric space. The {\it weak convex hull} of a set $A \subset \partial X$, denoted $\textrm{WCH}_X(A)$, is the union of the geodesic lines in $X$ which have both endpoints in the subset $A$. Given a subset $S \subseteq X$, let $\Lambda S = \overline{S} \cap \partial X$ denote the {\it limit set} of $S$, where $\overline{S}$ denotes the closure of $S$ in $X \cup \partial X$. If $H \leq \Isom(X)$, then the {\it limit set} of the subgroup $H$ is $\Lambda H := \Lambda (H \cdot x)$ where $x \in X$. The limit set $\Lambda H$ does not depend on the choice of $x$.
  \end{defi}
    
  \begin{thm}\cite[Main Theorem]{swenson} \label{thm:pdc_on_hull}
   Let $G$ act properly and cocompactly by isometries on $X$.
   If $H$ is a quasi-convex subgroup of $G$, then $H$ acts properly and cocompactly on $\textrm{WCH}_X(\Lambda H) \subseteq X$.
  \end{thm}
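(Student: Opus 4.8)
The plan is to reduce both conclusions to a single geometric statement: for any basepoint $x_0 \in X$, the weak convex hull $\textrm{WCH}_X(\Lambda H)$ lies within finite Hausdorff distance of the orbit $Q := H \cdot x_0$. Granting this, the rest is formal. Since $X$ is proper and $G$ acts properly on it, so does the subgroup $H$, and hence $H$ acts properly on its invariant subset $\textrm{WCH}_X(\Lambda H)$. For cocompactness, if $\textrm{WCH}_X(\Lambda H) \subseteq N_R(Q)$, then each point $p$ of the hull satisfies $d(p, h x_0) \le R$ for some $h \in H$, so $h^{-1} p$ lies in the compact set $\overline{\textrm{WCH}_X(\Lambda H)} \cap B_R(x_0)$; thus $H$ carries this compact set onto the hull (strictly, onto its closure, which suffices). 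If $H$ is finite then $\Lambda H = \emptyset$ and the assertion is vacuous, so from now on I assume $H$ is infinite and $|\Lambda H| \ge 2$.

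Both inclusions forming the Hausdorff bound use that, since $H$ is quasi-convex in $G$ and $G$ acts geometrically on $X$, the group $H$ is finitely generated and the orbit map $\phi \colon \Cay(H) \to X$, $h \mapsto h x_0$, is a quasi-isometric embedding onto a quasi-convex subset $Q$ of $X$; consequently $\phi$ extends to a topological embedding $\partial\phi \colon \partial H \hookrightarrow \partial X$ with image $\Lambda Q = \Lambda H$. The easy inclusion $\textrm{WCH}_X(\Lambda H) \subseteq N_R(Q)$ then goes as follows: a geodesic line $\ell$ in $X$ with both endpoints in $\Lambda H = \Lambda Q$ can be approximated by geodesic segments joining points of $Q$ that converge to those two endpoints; each such segment stays in a uniform neighborhood of $Q$ by quasi-convexity, and since in a $\delta$-hyperbolic space all bi-infinite geodesics with a fixed pair of endpoints are uniformly Hausdorff-close, $\ell$ itself lies in $N_R(Q)$ for a uniform $R$.

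The main step is the reverse inclusion $Q \subseteq N_R(\textrm{WCH}_X(\Lambda H))$. I would first establish the intrinsic statement that $\textrm{WCH}_{\Cay(H)}(\partial H)$ is coarsely dense in $\Cay(H)$, by an equivariance argument: the function $p \mapsto d\big(p, \textrm{WCH}_{\Cay(H)}(\partial H)\big)$ is invariant under $H$ because $\partial H$ is $H$-invariant and forming weak convex hulls commutes with isometries, and it is finite at every point because $|\partial H| \ge 2$ forces the hull to be nonempty; as $H$ acts cocompactly on $\Cay(H)$, this invariant continuous function is bounded, say by $D$. Transporting through $\phi$: for $q = \phi(h) \in Q$, choose a bi-infinite geodesic $c$ in $\Cay(H)$ with endpoints in $\partial H$ and $d(h, c) \le D$; then $\phi(c)$ is a quasigeodesic line in $X$ whose endpoints are the images under $\partial\phi$ of the endpoints of $c$, both lying in $\Lambda H$, and it passes within a uniform distance of $q$. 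By stability of quasigeodesics in the hyperbolic space $X$, $\phi(c)$ fellow-travels a genuine geodesic line $\ell$ with the same endpoints, so $\ell \subseteq \textrm{WCH}_X(\Lambda H)$ and $d\big(q, \textrm{WCH}_X(\Lambda H)\big) \le R$ for a uniform $R$. Combining the two inclusions yields the Hausdorff bound, and the first paragraph completes the argument.

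I expect the coarse-density statement for $\textrm{WCH}_{\Cay(H)}(\partial H)$ to be the heart of the matter; everything else is a routine combination of the Morse lemma with quasi-convexity. The only points needing care are the two-ended case of $H$, where $\Cay(H)$ is itself a quasi-line and the hull consists of the geodesic lines joining its two ends, and the cosmetic gap between $\textrm{WCH}_X(\Lambda H)$ and its closure when one makes ``cocompactly'' precise.
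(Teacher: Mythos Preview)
The paper does not prove this statement: it is quoted from \cite{swenson} as a black box and used without argument. So there is no ``paper's own proof'' to compare against.

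That said, your sketch is essentially the standard route to Swenson's result and is correct in outline. The reduction to a finite Hausdorff distance between $Q = H\cdot x_0$ and $\textrm{WCH}_X(\Lambda H)$ is the right move; the inclusion $\textrm{WCH}_X(\Lambda H)\subseteq N_R(Q)$ via quasi-convexity and the Morse lemma is routine, and your $H$-equivariance argument for coarse density of $\textrm{WCH}_{\Cay(H)}(\partial H)$ in $\Cay(H)$ is clean. One minor point: in the cocompactness paragraph you pass to the closure of the hull; it is worth noting (or proving) that $\textrm{WCH}_X(\Lambda H)$ is already closed in $X$ when $X$ is proper and $\Lambda H$ is compact, so no fudge is needed. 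Otherwise the proposal is sound.
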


    \subsection{The Stallings--Dunwoody decomposition}
    
    This paper concerns fundamental groups of finite graphs of groups. For background, see \cite{scottwall}, \cite{serre}. We use the following notation. 
  
    \begin{defi} \label{def:gog_gosp} 
      A \emph{graph of groups} $\mathcal{G}$ is a graph $\Gamma = (V\Gamma, E\Gamma)$ with a \emph{vertex group} $G_v$ for each $v \in V\Gamma$, an \emph{edge group} $G_e$ for each $e \in E\Gamma$, and \emph{edge maps}, which are injective homomorphisms $\Theta^{\pm}_e: G_e \rightarrow G_{\pm e}$ for each $e =(-e,+e) \in E\Gamma$.    
      A \emph{graph of spaces} associated to a graph of groups $\mathcal{G}$ is a space $Z$ constructed from a pointed \emph{vertex space} $(Z_v, z_v)$ for each $v \in V\Gamma$ with $\pi_1(Z_v,z_v) = G_v$, a pointed {\it edge space} $(Z_e, z_e)$ for each $e = (-e,+e) \in E\Gamma$ such that $\pi_1(Z_e,z_e) = G_e$, and maps $\theta^{\pm}_e: (Z_e,z_e) \rightarrow (Z_{\pm e}, z_{\pm{e}})$ such that $(\theta^{\pm}_e)_* = \Theta^{\pm}_e$. The space $Z$ is $$\left(\bigsqcup_{v \in V\Gamma} Z_v \bigsqcup_{e \in E\Gamma} \left( Z_e \times [-1,1]\right)\right) \; \Big/ \; \left\{(z,\pm 1) \sim \theta_e^{\pm}(z) \mid (z, \pm 1) \in Z_e\times [-1,1] \right\}. $$  The \emph{fundamental group} of the graph of groups $\mathcal{G}$ is $\pi_1(Z)$. The {\it underlying graph} of the graph of groups $\cG$ is the graph $\G$. A group $G$ \emph{splits as graph of groups} if $G$ is the fundamental group of a non-trivial graph of groups. 
    \end{defi}
    
    \begin{example} \label{ex:ideal_graph_of_spaces}
    Free products of closed hyperbolic manifold groups have natural graph of groups decompositions.
    If $G = \pi_1(M_1) \ast \cdots \ast \pi_1(M_k) \ast F_n$, then we make some choice of graph of groups decomposition with underlying graph $\Gamma$ and so that each vertex group $G_v$ is either the trivial group or $\pi_1(M_i)$.
    All the edge groups are trivial.
    A graph of spaces $Z$ can then be obtained by letting the vertex spaces $Z_v$ be either a point or $M_i$, and the edge space $Z_e$ also a point. 
    Indeed, in this paper we will allow all compact $M_i$ such that the universal cover $\tilde{M}_i$ is a rank-$1$ symmetric space $\mathbb{H}^n_\mathbb{F}$.
    \end{example}
    
    \begin{defi} \label{defn:ideal}
     We refer to the graph of spaces $Z$ constructed in Example~\ref{ex:ideal_graph_of_spaces} as an \emph{ideal} graph of spaces associated to a free product.
     The universal cover $\tilde{Z}$ of an ideal graph of spaces is an \emph{ideal tree of spaces}.
     If a model geometry $X$ is isometric to such a $\tilde{Z}$ then $X$ is said to be an \emph{ideal model geometry}.
     Note that the model geometry given by Proposition~\ref{prop:newModelSpace2} is an ideal model geometry.
    \end{defi}

    The graph of groups decomposition given in the next theorem is called a {\it Stallings--Dunwoody decomposition} of $G$.
     
    \begin{thm}[\cite{dunwoody85, stallings}] \label{thm:dunwoody}
      If $G$ is a finitely presented group, then $G$ splits as a finite graph of groups with finite edge groups and vertex groups that have at most one end. 
    \end{thm}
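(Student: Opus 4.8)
The plan is to obtain this as the combination of two classical facts: Stallings' theorem on the structure of finitely generated groups with more than one end, and Dunwoody's accessibility theorem, the latter supplying the termination that makes an iterative argument finite. I would not reprove either; the proof is really a reduction to these.

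\textbf{Base cases and the splitting step.} If $G$ is finite or one-ended we are done: the trivial graph of groups with a single vertex $v$, $G_v = G$, and no edges already has the required form. Otherwise $G$ has at least two ends, and Stallings' theorem~\cite{stallings} gives a nontrivial splitting of $G$ as an amalgam $A *_C B$ or an HNN extension $A *_C$ with $C$ finite; equivalently, $G$ is the fundamental group of a graph of groups with one edge, finite edge group, and one or two vertices. I would then iterate this on the vertex groups. For the iteration to make sense the vertex groups must again be finitely presented, and this holds because $G$ acts cocompactly on the Bass--Serre tree of the splitting with finite edge stabilizers, so each vertex stabilizer is finitely presented. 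Note also that a two-ended vertex group is not yet terminal (it has two ends, not at most one), but being virtually infinite cyclic it again splits over a finite subgroup; so the two-ended case is not an obstruction to continuing.

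\textbf{Termination via accessibility.} Replacing a vertex group $G_v$ in a finite graph of groups decomposition of $G$ with finite edge groups by its Stallings splitting over a finite subgroup, and re-attaching the edges incident to $v$, again produces a finite graph of groups decomposition of $G$ with finite edge groups; doing this whenever some vertex group has more than one end gives a sequence of refinements. Dunwoody's accessibility theorem~\cite{dunwoody85} asserts that, since $G$ is finitely presented, $G$ is \emph{accessible}: there is a uniform bound on the number of edges in any reduced graph of groups decomposition of $G$ with finite edge groups (reduced meaning that no edge group maps isomorphically onto an incident vertex group). Hence the refinement process must terminate after finitely many steps in a finite graph of groups $\cG$ with finite edge groups in which no vertex group splits over a finite subgroup. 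By the converse direction of Stallings' theorem, a finitely generated group that does not split over a finite subgroup has at most one end; so every vertex group of $\cG$ has at most one end, which is the assertion.

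\textbf{The main difficulty.} Stallings' theorem alone does not finish the argument: it only guarantees that one can keep splitting, never that one must stop — indeed Dunwoody constructed finitely generated (necessarily not finitely presented) groups for which this procedure never terminates. Thus the genuine content is Dunwoody's accessibility theorem, that finite presentability bounds the complexity of splittings over finite subgroups, and I would cite it rather than reprove it. The remaining ingredients — the base cases, the fact that vertex groups of a splitting of a finitely presented group over finite subgroups are again finitely presented, and the bookkeeping that a refinement of a finite graph of groups decomposition is again one — are routine.
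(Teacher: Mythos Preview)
Your sketch is correct and is the standard reduction: Stallings' theorem supplies the splitting step, finite presentability passes to vertex groups over finite edge groups, and Dunwoody's accessibility bounds the process. The paper, however, does not prove this statement at all; it is quoted as a black box from~\cite{dunwoody85, stallings} and used only as input. So there is nothing to compare on the level of argument --- you have supplied a proof outline where the paper simply cites the literature, and your outline matches the content of those citations.
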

   
    The ends of a group is a quasi-isometry invariant, and any finitely presented group with more than one end has a non-trivial Stallings-Dunwoody decomposition.
    For a hyperbolic group $G$, the ends correspond to the components of $\partial G$.
    The Stallings--Dunwoody decomposition allows us to generalize from free products in Theorem~\ref{thm_first} to the quasi-isometry class of groups containing such free products (see Theorem~\ref{thm_FINAL}).
    In particular, the following theorem of Papasoglu-Whyte implies that the free product of closed hyperbolic manifold groups is quasi-isometric to a group with Stallings-Dunwoody decomposition whose one-ended vertex groups are quasi-isometric to $\mathbb{H}^n_{\mathbb{F}}$ (for possibly many different $n > 1$ and $\mathbb{F} \in \{ \mathbb{R}, \mathbb{C}, \mathbb{H}, \mathbf{Ca}\}$).
    
    \begin{thm}\cite{papasogluwhyte} \label{thm:papasogluwhyte}
     Let $G$ and $G'$ be finitely presented groups with infinitely many ends.
     The Stallings--Dunwoody decompositions of $G$ and $G'$ have the same set of quasi-isometry types of one ended vertex groups (not counting multiplicity) if and only if $G$ and $G'$ are quasi-isometric.
    \end{thm}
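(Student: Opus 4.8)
The plan is to reduce the statement to the coarse geometry of trees of spaces, following Papasoglu--Whyte. Fix a Stallings--Dunwoody decomposition of $G$ with Bass--Serre tree $T$, one-ended vertex groups $V_1,\dots,V_m$ (canonical up to conjugacy, as noted above), and finite edge groups; fix a Cayley graph $Y_i$ for each $V_i$, and set $\mathcal{S}(G):=\{[Y_1],\dots,[Y_m]\}$, the set of quasi-isometry classes of one-ended vertex groups. Build a proper geodesic space $X$ carrying a cocompact $G$-action together with a $G$-equivariant map $p\colon X\to T$ by ``blowing up'' each vertex $v\in VT$ to a $G_v$-cocompact vertex space $p^{-1}(v)$ -- a point when $G_v$ is finite, and quasi-isometric to $Y_i$ when $G_v$ is conjugate to $V_i$ -- and attaching bounded edge spaces along the edges of $T$ (bounded because the edge groups are finite). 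By the Milnor--Schwarz lemma $G$ is quasi-isometric to $X$, and likewise $G'$ is quasi-isometric to a tree of spaces $X'$ over its Bass--Serre tree $T'$. Since $G$ has infinitely many ends and acts cocompactly on $T$, the tree $T$ is infinite, is not a line, and is $G$-cocompact, hence is \emph{bushy} (every vertex lies within bounded distance of a vertex with at least three unbounded complementary components); moreover, each $G$-orbit of vertices of $T$, and in particular the orbit of vertices carrying any given $Y_i$, is $R$-dense in $T$ for a uniform $R$. The theorem thus reduces to: $X$ and $X'$ are quasi-isometric if and only if $\mathcal{S}(G)=\mathcal{S}(G')$.

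For the implication $\mathcal{S}(G)=\mathcal{S}(G')\Rightarrow G$ quasi-isometric to $G'$, the goal is to prove that the quasi-isometry type of $X$ is determined by $\mathcal{S}(G)$. If $\mathcal{S}(G)=\varnothing$, every vertex group is finite, so $G$ acts on $T$ with finite stabilizers and is virtually free with infinitely many ends, hence quasi-isometric to $F_2$; the same holds for $G'$, so we are done. Assume then $\mathcal{S}(G)\neq\varnothing$. I would first normalize: absorbing the point vertex spaces and the bounded edge spaces into neighbouring one-ended vertex spaces (possible since every vertex of $T$ is within bounded distance of one carrying some $Y_i$) yields a quasi-isometric tree of spaces $X_0$ in which every vertex space is a copy of some $Y_i$, attached along a bushy tree in which each $Y_i$ occurs $R$-densely. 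Crucially, multiplicities are invisible to quasi-isometry, so one is free to pad $X_0$ with extra $R$-dense copies of any $Y_i$; after padding, $X_0$ depends up to quasi-isometry only on the set $\mathcal{S}(G)$. Given two such normalized spaces $X_0, X_0'$ with the same label set, one builds a quasi-isometry between them by a back-and-forth/greedy matching: construct a label-preserving bijection of the vertex sets of the two bushy trees that is simultaneously a quasi-isometry of trees -- processing the trees in synchronized breadth-first fashion, at each step using $R$-density to find a vertex carrying the required label within bounded distance and using bushiness to absorb valence discrepancies by rerouting entire branches -- and then extend it over the vertex spaces using the finitely many uniform self-quasi-isometries $Y_i\to Y_i$ and over the bounded edge spaces in the evident way. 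I expect \emph{this tree-matching step to be the crux}: the two bushy trees need not have matching valences or label patterns, and reconciling them is exactly where bushiness (furnishing room to reroute branches) and the freedom in multiplicities are essential; this is the point at which the ``big and flexible'' nature of trees of spaces over bushy trees is exploited, in the spirit of Mosher--Sageev--Whyte's quasi-action techniques.

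For the converse, $G$ quasi-isometric to $G'\Rightarrow\mathcal{S}(G)=\mathcal{S}(G')$, I would argue that $\mathcal{S}(G)$ is a quasi-isometry invariant via coarse accessibility. Because the edge spaces of $X$ are bounded, the coarse tree-of-spaces structure of $X$ -- its decomposition into one-ended pieces glued along bounded coarse cut-sets, the one-ended pieces being exactly the $p^{-1}(v)$ with $G_v$ one-ended, up to bounded Hausdorff distance -- is canonical (a coarse form of Dunwoody's accessibility) and is therefore transported by any quasi-isometry $X\to X'$, up to bounded error, onto the corresponding structure of $X'$, forcing the sets of quasi-isometry types of one-ended pieces to agree. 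Equivalently, and perhaps more cleanly: since all edge groups are finite, $G$ is hyperbolic relative to $\{V_1,\dots,V_m\}$, and quasi-isometric rigidity of peripheral structure for relatively hyperbolic groups gives a bijection between the peripheral collections of $G$ and $G'$ under which corresponding subgroups are quasi-isometric; passing to quasi-isometry classes yields $\mathcal{S}(G)=\mathcal{S}(G')$. Either route completes the proof.
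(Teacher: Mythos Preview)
The paper does not prove this theorem: it is quoted from Papasoglu--Whyte \cite{papasogluwhyte} and used as a black box, so there is no in-paper argument to compare against. Your sketch is a reasonable outline of the original Papasoglu--Whyte strategy, and the forward direction in particular---normalizing to a tree of spaces over a bushy tree with $R$-dense label classes, then building a label-preserving quasi-isometry of trees via a greedy/back-and-forth construction---is faithful to their approach.

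One remark on the converse. Your second route, via relative hyperbolicity and quasi-isometric rigidity of the peripheral structure, is valid but anachronistic: those rigidity results (Dru\c{t}u--Sapir, Behrstock--Dru\c{t}u--Mosher) postdate Papasoglu--Whyte. The original argument is closer to your first route: a quasi-isometry sends one-ended vertex spaces to within bounded Hausdorff distance of one-ended vertex spaces, because these are characterized coarsely as the maximal one-ended subsets (equivalently, their limit sets are the non-atomic components of the boundary of ends, or of the Gromov boundary in the hyperbolic case). If you want a self-contained write-up, that is the step to flesh out; the ``coarse accessibility'' phrasing is correct in spirit but would need to be made precise.
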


    Theorem~\ref{thm:papasogluwhyte} combined with the following lemma proves that any residually finite group quasi-isometric to a free product  given in the statement of Theorem~\ref{thm_first}, is virtually such a free product.
    
   \begin{lemma} \label{lemma:torsion_free}
   Let $G$ be an infinite-ended hyperbolic group such that the one-ended vertex groups in a Stallings--Dunwoody decomposition are residually finite.
   Then $G$ is residually finite and virtually torsion-free.
   \end{lemma}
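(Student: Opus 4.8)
The plan is to build a finite-index torsion-free subgroup of $G$ using residual finiteness of the pieces, and then conclude residual finiteness of $G$ itself from the fact that it is virtually a free product of residually finite groups. First I would invoke Theorem~\ref{thm:dunwoody} to write $G$ as the fundamental group of a finite graph of groups $\mathcal{G}$ with underlying finite graph $\Gamma$, finite edge groups $G_e$, and vertex groups $G_v$ that have at most one end; by hypothesis the one-ended $G_v$ are residually finite, and the finite-ended $G_v$ are either finite or virtually free (hence residually finite), so \emph{every} vertex group and every edge group of $\mathcal{G}$ is residually finite. The key structural input is that the group $G$ acts on the Bass--Serre tree $T$ of $\mathcal{G}$ with finite edge stabilizers and with vertex stabilizers the conjugates of the $G_v$; since the quotient graph $G\backslash T = \Gamma$ is finite, there are only finitely many conjugacy classes of vertex and edge groups, and each is a finitely generated residually finite group.

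The central step is to produce a torsion-free finite-index subgroup. Here I would appeal to a standard result on graphs of groups with residually finite vertex groups and finite edge groups: such a group is \emph{virtually torsion-free}, and in fact contains a finite-index subgroup acting freely on $T$ (equivalently, a finite-index free subgroup of a graph of groups whose pieces are manageable). Concretely, one uses that a finitely generated group which is virtually a graph of finite groups over a residually finite base admits a finite-index subgroup $G_0$ meeting every conjugate of every finite subgroup trivially; such a $G_0$ is torsion-free because any torsion element of $G$ lies in a conjugate of some vertex group $G_v$ (by the torsion theorem for graphs of groups, elements of finite order are elliptic in $T$) and each $G_v$ is virtually torsion-free by residual finiteness and finite generation. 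Passing to a common finite-index subgroup that is torsion-free in each vertex group and then to the intersection of its $G$-conjugates (a normal finite-index subgroup of $G$, since there are finitely many conjugates by finite index) yields a torsion-free finite-index normal subgroup $N \trianglelefteq G$. By the Kurosh subgroup theorem applied to the action of $N$ on $T$, the group $N$ splits as a free product of a free group with conjugates of finite-index subgroups of the $G_v$; in particular $N$ is a free product of residually finite groups, hence residually finite. Finally, since residual finiteness passes up from finite-index subgroups to the overgroup (an element $g \in G \setminus \{1\}$ either lies in $N$, where it is separated using residual finiteness of $N$, or is separated by the quotient $G \to G/N$ composed with the action on cosets), $G$ is residually finite, and it is virtually torsion-free since $N$ is.

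The main obstacle I anticipate is the torsion-freeness step: one must argue carefully that a single finite-index subgroup can simultaneously kill \emph{all} conjugates of \emph{all} finite subgroups of $G$, not merely the torsion inside one fixed vertex group. This is where finiteness of the graph $\Gamma$ is essential (so there are only finitely many vertex-group conjugacy classes to treat), together with the fact that each vertex group is finitely generated and residually finite (so each has a finite-index torsion-free subgroup, and a characteristic one can be arranged), and the fact that finite subgroups of $G$ are conjugate into vertex groups (the elliptic-element structure of a group acting on a tree with finite edge stabilizers). Once these three ingredients are combined, intersecting the relevant preimages over the finitely many vertices and then over the finitely many $G$-conjugates produces the desired normal torsion-free finite-index subgroup, and the rest is the routine ``free product of residually finite groups is residually finite'' plus ``residual finiteness is inherited by overgroups of finite index.''
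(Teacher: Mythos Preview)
Your proposal has a genuine gap. The assertion that ``each $G_v$ is virtually torsion-free by residual finiteness and finite generation'' is false in general: the Grigorchuk group is finitely generated, residually finite, and a torsion group, hence not virtually torsion-free. More to the point, you never invoke the hypothesis that $G$ is \emph{hyperbolic}, and this is precisely the ingredient that controls the torsion.

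The paper's proof is shorter and runs in the opposite order. First, residual finiteness of $G$ follows directly from the classical fact that amalgamated free products and HNN extensions over \emph{finite} subgroups preserve residual finiteness (Baumslag, Tretkoff, Baumslag--Tretkoff), so there is no need to pass through a torsion-free subgroup first. Second, since $G$ itself is hyperbolic it has only finitely many conjugacy classes of finite-order elements; residual finiteness of $G$ then produces a single finite quotient in which a representative of each such class survives, and the kernel is the desired torsion-free finite-index normal subgroup. Your route can be repaired by observing that the one-ended vertex groups are quasi-convex in $G$ (the edge groups being finite) and hence themselves hyperbolic, so each $G_v$ does have only finitely many conjugacy classes of torsion and \emph{is} virtually torsion-free once you add residual finiteness; but this vertex-by-vertex argument, followed by the somewhat vague assembly step you sketch, is considerably longer than applying hyperbolicity to $G$ directly.
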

   
   \begin{proof}
    Since $G$ has residually finite vertex groups and finite edge groups, the group $G$ is itself residually finite. 
    Indeed, residual finiteness is preserved under HNN extensions and amalgamated free products over finite subgroups~\cite{Baumslag63,Tretkoff73, BaumslagTretkoff78}. 
    As $G$ is a hyperbolic group, it contains only finitely many conjugacy classes of finite elements.
    Thus, after passing to a finite-index normal subgroup that excludes a finite list of representatives from each conjugacy class, one obtains a torsion-free finite-index subgroup.
   \end{proof}
    
    \begin{coro} \label{cor:qiClosed}
     Let $G = \pi_1(M_1) * \ldots * \pi_1(M_k) * \mathbb{F}_n$, where each $M_i$ is a closed hyperbolic manifold.
     Suppose that $G'$ is residually finite and quasi-isometric to $G$. 
     Then, $G'$ is virtually a free product of the form $\pi_1(M_1') * \ldots * \pi_1(M_{\ell}') * \mathbb{F}_m$, where $\widetilde{M}_i'$ is a rank-$1$ symmetric space.
    \end{coro}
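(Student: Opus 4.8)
The plan is to combine the Papasoglu--Whyte quasi-isometry classification (Theorem~\ref{thm:papasogluwhyte}) with quasi-isometric rigidity of rank-$1$ symmetric spaces, having first used residual finiteness to reduce to a torsion-free situation. To begin, I would record the features of $G$ that we need. Each $\pi_1(M_i)$ is a uniform lattice in $\Isom(\widetilde M_i)$, where $\widetilde M_i = \mathbb{H}^{n_i}_{\mathbb{F}_i}$ is a rank-$1$ symmetric space, so by the Milnor--Schwarz lemma $\pi_1(M_i)$ is quasi-isometric to $\mathbb{H}^{n_i}_{\mathbb{F}_i}$, and it is one-ended, being a cocompact lattice in the one-ended group $\Isom(\mathbb{H}^{n_i}_{\mathbb{F}_i})$. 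Consequently $G$ is hyperbolic, hence so is $G'$, and the one-ended vertex groups of any Stallings--Dunwoody (equivalently Grushko) decomposition of $G$ are, up to quasi-isometry and not counting multiplicity, exactly $\pi_1(M_1),\dots,\pi_1(M_k)$, each quasi-isometric to a rank-$1$ symmetric space. If $k=0$ then $G$ is free, so $G'$ is virtually free and the conclusion holds with no manifold factors; if $k=1$ and $n=0$ then $G$ is one-ended and the argument below simplifies by skipping the free-product decomposition. So we may assume $G$, and hence $G'$, has infinitely many ends.

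Next I would pass to a torsion-free finite-index subgroup. Since $G'$ is residually finite and hyperbolic it is virtually torsion-free: a hyperbolic group has only finitely many conjugacy classes of finite-order elements, and residual finiteness allows one to pass to a finite-index normal subgroup avoiding a representative of each, exactly as in the proof of Lemma~\ref{lemma:torsion_free} (alternatively apply Lemma~\ref{lemma:torsion_free} directly, since subgroups of $G'$ are residually finite). Let $G_0'\leqslant G'$ be torsion-free of finite index. Then $G_0'$ is quasi-isometric to $G'$, hence to $G$, and in particular is a finitely presented, torsion-free group with infinitely many ends. By Stallings' theorem together with Grushko's theorem, $G_0'\cong A_1 * \cdots * A_\ell * \mathbb{F}_m$ with each $A_j$ one-ended (being torsion-free, finitely generated, freely indecomposable, and not infinite cyclic) and $\mathbb{F}_m$ free; since $G$ has a one-ended free factor, Theorem~\ref{thm:papasogluwhyte} forces $\ell\geq 1$.

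Then I would match the factors and recognize them as manifold groups. Applying Theorem~\ref{thm:papasogluwhyte} to the quasi-isometric pair $(G_0',G)$, the one-ended vertex groups of their Stallings--Dunwoody decompositions have the same quasi-isometry types, so each $A_j$ is quasi-isometric to some $\pi_1(M_i)$, and therefore to the rank-$1$ symmetric space $\mathbb{H}^{n_i}_{\mathbb{F}_i}$. By quasi-isometric rigidity of rank-$1$ symmetric spaces --- Tukia~\cite{tukia} in the real case, and Chow~\cite{Chow96} and Pansu~\cite{Pansu89} in the complex, quaternionic, and Cayley cases (see the discussion in the introduction: any group quasi-isometric to $\mathbb{H}^n_{\mathbb{F}}$ acts geometrically on $\mathbb{H}^n_{\mathbb{F}}$) --- the group $A_j$ acts properly and cocompactly by isometries on $\mathbb{H}^{n_i}_{\mathbb{F}_i}$. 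Since $A_j$ is torsion-free this action is free, so $A_j\cong\pi_1(M_j')$ where $M_j' := \mathbb{H}^{n_i}_{\mathbb{F}_i}/A_j$ is a closed manifold with universal cover $\widetilde{M_j'}=\mathbb{H}^{n_i}_{\mathbb{F}_i}$, a rank-$1$ symmetric space. Hence $G_0'\cong \pi_1(M_1') * \cdots * \pi_1(M_\ell') * \mathbb{F}_m$, exhibiting $G'$ as virtually a free product of the required form. (In the degenerate case $k=1$, $n=0$, the group $G_0'$ is itself quasi-isometric to $\mathbb{H}^{n_1}_{\mathbb{F}_1}$, and the same rigidity-plus-torsion-free argument gives $G_0'\cong\pi_1(M_1')$.)

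This is essentially an assembly of known results rather than an argument with a deep obstacle; the two substantive external inputs are the quasi-isometric rigidity of rank-$1$ symmetric spaces, which supplies a geometric action of each one-ended piece on the appropriate $\mathbb{H}^n_{\mathbb{F}}$, and the Papasoglu--Whyte classification, which transports the one-ended pieces across the quasi-isometry. The one point that genuinely requires attention is the order of operations: virtual torsion-freeness (coming from residual finiteness) must be secured \emph{before} decomposing, so that the geometric action on $\mathbb{H}^n_{\mathbb{F}}$ produced by quasi-isometric rigidity can be upgraded to a free action, and hence to an honest closed-manifold structure. Everything else is bookkeeping with free-product decompositions.
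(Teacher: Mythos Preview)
Your proof is correct and follows essentially the same approach as the paper: both combine Papasoglu--Whyte (Theorem~\ref{thm:papasogluwhyte}) with quasi-isometric rigidity of rank-$1$ symmetric spaces and use residual finiteness to secure virtual torsion-freeness. The only difference is the order of operations---the paper first takes the Stallings--Dunwoody decomposition of $G'$, applies rigidity to the vertex groups, and only then passes to a torsion-free finite-index subgroup via Lemma~\ref{lemma:torsion_free}, whereas you pass to a torsion-free subgroup first and then decompose---but this is a cosmetic reordering rather than a different argument.
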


    \begin{proof}
     As the number of ends is a quasi-isometry invariant, $G'$ has a non-trivial Stallings--Dunwoody decomposition.
     By Theorem~\ref{thm:papasogluwhyte} the one-ended vertex groups in this decomposition are each quasi-isometric to a rank-$1$ symmetric space $\mathbb{H}^d_\mathbb{F}$ for some $d \geq 2$.
     By rigidity results of Gabai~\cite{gabai92}, Tukia~\cite{tukia}, Chow~\cite{Chow96}, and Pansu~\cite{Pansu89}, these vertex groups act geometrically on rank-$1$ symmetric spaces.
     By Lemma~\ref{lemma:torsion_free}, the group $G'$ has a torsion-free subgroup of finite index, in which the vertex groups will embed in the isometry group of the symmetric space,  and so the induced splitting will give the desired free product decomposition. 
    \end{proof}

    \subsection{Abstract commensurability classes}
    
    We explain in this section that within the class of free products we are considering, each quasi-isometry class contains infinitely many abstract commensurability classes.
    When all the one-ended factors are cocompact Fuchsian groups this result follows from work of  Whyte~\cite[Theorem 1.6]{whyte}.
    For uniform lattices in the isometry groups of higher-dimensional rank-1 symmetric spaces, one can form incommensurable free products by forming free products of incommensurable lattices using the various means available (see, for example~\cite{MaclachlanReid, GromovPiatetski-Shapiro, NeumannReid}).
    For our purposes, there is a far simpler means of constructing incommensurable free products using a variation of Whyte's trick via the co-volume of lattices.
    
    \begin{lemma} \label{lem:infinitelyManyClasses}
     Each quasi-isometry class of free products of uniform lattices in the isometry groups of rank-1 symmetric spaces contains infinitely many abstract commensurability classes.
    \end{lemma}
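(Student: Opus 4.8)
The plan is to exploit the fact that abstract commensurability classes of free products of the given type are constrained by a single numerical invariant built from the covolumes of the one-ended factors, while there are infinitely many choices of such covolumes within a fixed quasi-isometry class. First I would fix a quasi-isometry class, which by Theorem~\ref{thm:papasogluwhyte} is determined by the (multiplicity-free) set of quasi-isometry types of the one-ended factors; concretely, fix a collection of rank-$1$ symmetric spaces $\{\Hy^{n_1}_{\F_1}, \ldots, \Hy^{n_s}_{\F_s}\}$ and work within the class of groups $G = H_1 * \cdots * H_k * F_m$ where each $H_i$ is a uniform lattice in $\Isom(\Hy^{n_j}_{\F_j})$ for some $j$, each type $j$ occurring at least once, and $m \geq 0$ arbitrary. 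Within this class I would produce infinitely many groups and show that only finitely many of them can lie in any single abstract commensurability class.

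The key invariant is the following. If $G$ and $G'$ are abstractly commensurable, they have isomorphic finite-index subgroups $G_0 \leqslant G$, $G_0' \leqslant G'$ of indices $d$ and $d'$ respectively. Passing to a finite-index subgroup of a free product (via the Kurosh subgroup theorem) yields again a free product of conjugates of finite-index subgroups of the $H_i$ together with a free group, and the conjugates of $H_i$ appearing, counted with multiplicity, together with the Euler-characteristic/covolume bookkeeping, transform predictably: for each rank-$1$ type $j$, the sum $\sum_i \Vol(H_i \backslash \Hy^{n_j}_{\F_j})$ over the factors $H_i$ of type $j$ scales by the index $d$ upon passing to $G_0$. (Here I use that covolume of a uniform lattice is multiplicative in finite index, and that the Kurosh decomposition of a finite-index subgroup of index $d$ in a free product distributes the ``mass'' of each factor by total weight $d$.) Hence the ratio of total covolumes of type $j$, for each $j$, is the same rational number $d'/d$ for all $j$ simultaneously; in particular the tuple $\big(\sum_i \Vol(H_i\backslash \Hy^{n_j}_{\F_j})\big)_{j=1}^{s}$, considered up to simultaneous scaling by a positive rational, is an abstract commensurability invariant within the fixed quasi-isometry class.

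With this invariant in hand, the construction is routine. Fix one reference group $G_1 = H_1 * \cdots * H_k$ realizing the chosen quasi-isometry type, with total type-$1$ covolume equal to some $V_1 > 0$. Now for each integer $t \geq 1$ form a new group $G_t$ in the same quasi-isometry class by taking a free product that contains, among its type-$1$ factors, $t$ copies of a fixed uniform lattice $L$ in $\Isom(\Hy^{n_1}_{\F_1})$ (plus one fixed lattice of each other type $j \geq 2$, and no free part): then the type-$1$ total covolume is $t \cdot \Vol(L \backslash \Hy^{n_1}_{\F_1})$, while the type-$j$ total covolume for $j \geq 2$ is a fixed constant $c_j$ independent of $t$. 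Two such groups $G_t, G_{t'}$ with $t \neq t'$ cannot be abstractly commensurable: simultaneous scaling would force $t/t' = c_j/c_j = 1$ from any type $j \geq 2$, a contradiction. (If $s = 1$, i.e. there is only one quasi-isometry type among the factors, one instead uses two distinct incommensurable lattices $L, L'$ of that type with $\Vol(L'\backslash\Hy^{n_1}_{\F_1}) / \Vol(L\backslash \Hy^{n_1}_{\F_1})$ irrational — available by arithmetic constructions, or for surfaces by Whyte's genus trick — and builds $G_t$ using $t$ copies of $L$ and one copy of $L'$; then the pair $(t\,\Vol(L\backslash\cdot) + \Vol(L'\backslash\cdot))$ alone does not pin things down, so one must additionally track the multiset of commensurability classes of factors appearing, which is likewise a commensurability invariant via Kurosh, and conclude.) Thus $\{G_t : t \geq 1\}$ meets infinitely many abstract commensurability classes, proving the lemma.

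The main obstacle is the bookkeeping in the previous paragraph: making precise how the Kurosh subgroup theorem, applied to a finite-index subgroup of a free product of the given shape, distributes the one-ended factors and their covolumes, and checking that the resulting numerical data is genuinely a simultaneous-scaling invariant rather than something subtler. Once that lemma about finite-index subgroups of such free products is nailed down — essentially a covolume/Euler-characteristic additivity statement combined with multiplicativity under finite covers — the construction of infinitely many incommensurable examples is immediate.
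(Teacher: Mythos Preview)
Your approach is essentially the paper's: both isolate the tuple of total covolumes indexed by the rank-$1$ types appearing, show it is well-defined up to simultaneous positive-rational scaling under abstract commensurability (the paper proves exactly your ``Kurosh bookkeeping'' step as an explicit Claim, phrased via a graph-of-spaces degree count rather than invoking Kurosh by name), and then exhibit infinitely many values of this invariant. The only real differences are cosmetic. First, the paper varies the lattices while keeping the number of factors fixed, whereas you fix the lattices and vary the number of copies; both produce infinitely many inequivalent tuples. Second, in the single-type case ($s=1$) the paper works with two-factor products $H_1*H_2$ versus $H_1'*H_2'$ and compares the pair $(v_1,v_2)$ directly, which is cleaner than your detour through irrational volume ratios and the undeveloped appeal to ``multisets of commensurability classes of factors.'' In fact your hedge there is unnecessary: if $\Vol(L')/\Vol(L)\notin\Q$ then the single number $t\cdot\Vol(L)+\Vol(L')$ already distinguishes the $G_t$ up to rational scaling, so your original invariant suffices; the only genuine exception is $\Hy^2$, where all such ratios are rational by Gauss--Bonnet and you correctly fall back on Whyte's Euler-characteristic argument.
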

    
    \begin{proof}
    The quasi-isometry class of a free product is determined by the set of quasi-isometry classes of its one-ended factors by Theorem~\ref{thm:papasogluwhyte}.
    First, suppose that we are considering a quasi-isometry class determined by a set of $n > 1$ distinct quasi-isometry classes of one-ended factors, each corresponding to a unique rank-1 symmetric space $\mathbb{H}_{\mathbb{F}_i}^{d_i}$ for $1 \leq i \leq n$. 
    Let $G = H_1 \ast \cdots \ast H_n$ and $G' = H_1' \ast \ldots \ast H_n'$ where $H_i$ and $H_i'$ are uniform lattices in $\Isom(\mathbb{H}_{\mathbb{F}_i}^{d_i})$. 
    Suppose that $G$ and $G'$ contain isomorphic finite-index subgroups $\hat{G} \leqslant G$ and $\hat{G}' \leqslant G'$. 
    Then, $\hat{G} \cong \hat{G}' = \hat{H}_1 \ast \ldots \ast \hat{H}_m \ast \mathbb{F}_k$, and for each subgroup $\hat{H}_j$ there exists an index $i$ so that $\hat{H}_j$ is conjugate in $G$ to a finite-index subgroup of $H_i$ and $\hat{H}_j$ is conjugate in $G'$ to a finite-index subgroup of $H_i'$. Let the covolumes of the lattices be $v_i = \Vol(H_i \backslash \mathbb{H}_{\mathbb{F}_i}^{d_i} )$ and $v_i' = \Vol( H_i' \backslash\mathbb{H}_{\mathbb{F}_i}^{d_i})$.
    We now assert we can express the index of the subgroups in the following terms:    
    
    \begin{claim} For each $i \in \{1, \ldots, n\}$ 
    \[
     [G : \hat{G}] = \frac{\sum_{\hat{H}_j \sim H_i} \Vol({\hat{H}_j} \backslash \mathbb{H}_{\mathbb{F}_i}^{d_i})  }{v_i}
     \;\; \textrm{ and }  \;\;
     [G' : \hat{G}'] = \frac{\sum_{\hat{H}_j \sim H_i'} \Vol({\hat{H}_j} \backslash \mathbb{H}_{\mathbb{F}_i}^{d_i})  }{v_i'},
    \]
    where $\hat{H}_j \sim H_i$ indicates that after conjugating in $G$, the subgroup $\hat{H}_j$ is a finite-index subgroup of $H_i$, and $\hat{H}_j \sim H_i'$ is similarly defined.
    \end{claim}
    
    \begin{proof}[Proof of Claim 1.]
    We prove the expression for $G$, and $G'$ follows similarly.
    Let $X$ be a graph of spaces for $G$ corresponding to the given free splitting with underlying graph $\Upsilon$, a star with root vertex $u_0$ with $X_{u_0}$ a point, each edge space a point, and each vertex space $X_{v_i}$ a presentation complex for $H_i$.
    Let $p: \hat X \rightarrow X$ be the finite cover corresponding to the subgroup $\hat G \leqslant G$.
    Then $p^{-1}(X_{v_i})$ is a disjoint collection of vertex spaces in the inducted decomposition $\{ \hat X_{\hat u_1}, \ldots, \hat X_{\hat u_k} \}$ that are in correspondence with the $\hat H_j \sim H_i$ in the free factorization of $\hat G$.
    Then the degree of $p$, which is equal to $[G : \hat G]$ can be read off by summing the degrees of each covering $\hat X_{\hat u_j} \rightarrow X_i$ which is equal to $\Vol({\hat{H}_j} \backslash \mathbb{H}_{\mathbb{F}_i}^{d_i}) / \Vol({H_i} \backslash \mathbb{H}_{\mathbb{F}_i}^{d_i})$ since covolume of a lattice is multiplicative by the degree of a finite-index subgroup.
    Thus the equality given in the statement follows by taking the sum.
    \end{proof}
    
    The summation of volumes in the numerators of the left and right hand-side are equal for each $i$. So, we deduce that for all $i$,
    \[\frac{v_i}{v_i'} = \frac{[G' : \hat{G}']}{[G : \hat{G}]}. \] 
    Therefore, choosing suitable $H_i$ and $H_i'$ to give distinct ratios for each $i$ produces infinitely many incommensurable $G$ and $G'$.
    
    In the case $n=1$, and there is a single quasi-isometry class of one-ended factors, let $G= H_1 \ast H_2$ and $G' = H_1' \ast H_2'$.
    Suppose again you can find isomorphic finite-index subgroups $\hat{G} \cong \hat{G}' \cong \hat{H}_1 \ast \ldots \ast \hat{H}_m \ast \mathbb{F}_k$. 
    Each factor $\hat{H}_j$ is conjugate in $G$ [resp. $G'$] into a factor $H_1$ or $H_2$ [resp. $H_1'$ and $H_2'$].
    We can then sum covolumes over these respective partitions of the factors to verify that
    
    \[
     [G: \hat{G}] = \frac{\sum_{\hat{H}_j^g \leq H_i} \Vol(\hat{H}_j \backslash \mathbb{H}_\mathbb{F}^d)}{v_i}
     \;\; \textrm{ and } \;\;
     [G': \hat{G}'] = \frac{\sum_{\hat{H}_j^g \leq H_i'} \Vol(\hat{H}_j \backslash \mathbb{H}_\mathbb{F}^d)}{v_i'}
    \]

    Thus we again obtain that $v_i / v_i' = [G' : \hat{G}'] / [G : \hat{G}]$ and we can obtain incommensurable $G$ and $G'$ by choosing factors with different covolume ratios.
    \end{proof}

    \section{A common simplicial model geometry} \label{sec:common_simp_geo}
  
      This section is devoted to proving Theorem~\ref{intro_nicespace}.
      We will let $G$ denote an infinite-ended hyperbolic group that is not virtually free. 
      Let $\cG$ be a Stallings--Dunwoody decomposition of $G$.
      Let $\{G_i\}_{i=1}^k$  denote the one-ended vertex groups in $\cG$.
      Let $T$ be the Bass-Serre tree for the graph of groups decomposition $\cG$.
      Suppose that $G$ acts geometrically on a proper geodesic metric space $X$, and let $H = \Isom(X)$.

     \subsection{A simplicial model geometry from intersecting weak convex hulls}
     
     As detailed by Martin--\'{S}wiatkowski~\cite[Section 2]{martinswiatkowski}, the boundary $\p G$ decomposes as 
     \[\p G \cong \bigsqcup \p G_i^g \sqcup  \p T. \]  
     The {\it atomic components} of the boundary $\p G$ are the singletons corresponding to the ends of the tree~$T$, and the {\it non-atomic components} of the boundaries are the components $\p G_i^g$, homeomorphic to the boundaries of the one-ended vertex groups. 
     The homeomorphism $\phi_G:\p G \rightarrow \p X$ induced by the geometric action of $G$ on $X$ yields well-defined {\it atomic components} and {\it non-atomic components} of $\p X$. 
     
     Let $\{ S_{\alpha} \, | \, \alpha \in I\}$ be the set of non-atomic components of $\p X$, indexed in some fashion by $I$.
     We note that $I$ is non-empty as $G$ is not virtually free.
     Let $X_\alpha = WCH(S_\al)$ be the weak convex hull of $S_{\al}$ in $X$. 
     By Theorem~\ref{thm:pdc_on_hull}, the weak convex hull $X_\alpha$  is quasi-isometric to $G_i$, a one-ended vertex group in $\cG$. 
     For $\alpha, \beta \in I$ and $r \in \R$, let
     \[ U_{\alpha}^{\beta}(r): = N_r(X_{\al}) \cap N_r(X_{\beta}). \] 
     Note that $U_{\al}^{\beta}(r) = U^{\al}_{\beta}(r)$. 
     For a specified value of $r$ we will let $U_\alpha^\beta := U_\alpha^\beta(r)$.
     
     \begin{lemma} \label{lemma_r_large}
      There exists $r \in \R$ sufficiently large so that 
      \begin{enumerate}
       \item[(i)] $X \subset \bigcup_{\alpha \in I} N_r(X_\alpha)$;
       \item[(ii)] $X_{\alpha} \subset \bigcup_{\beta \in I - \{\alpha\}} N_r(X_\beta)$; 
       \item[(iii)] $X \subseteq \bigcup_{\alpha, \beta \in I} U_\alpha^\beta(r)$.
      \end{enumerate}
      Moreover, for a given value of $r > 0$, 
      \begin{enumerate}
       \item[(iv)] there exists a constant $B = B(r) >0$ such that $\diam(U_{\alpha}^\beta(r)) < B$, and 
       \item[(v)] there is an upper bound $N = N(r)$ on the number of subsets $U_{\gamma}^{\delta}(r)$ that can intersect a given subset $U_\alpha^\beta(r)$.
      \end{enumerate}
    \end{lemma}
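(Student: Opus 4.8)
The plan is to establish the five assertions of Lemma~\ref{lemma_r_large} in turn, exploiting that $G$ acts cocompactly on $X$ and that the weak convex hulls $X_\alpha$ are the $G$-translates of finitely many ``model'' hulls, one for each conjugacy class of one-ended vertex group $G_i$. Fix a basepoint $x_0 \in X$ and let $D = \diam(G_{\mathrm{stab}}\backslash X)$ be a cocompactness constant, so every point of $X$ is within $D$ of some $G$-orbit representative.

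\medskip

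\emph{Covering statements (i)--(iii).} For (i): the sets $\{X_\alpha\}$ are $G$-equivariantly indexed by the cosets $G/G_i$ (over the finitely many $i$), so it suffices to show that some fixed point, say $x_0$, lies in $N_r(X_\alpha)$ for some $\alpha$ — then cocompactness spreads this over all of $X$ with $r$ replaced by $r + D$. Since $G$ is not virtually free, $I \neq \emptyset$; pick any $\alpha$ and note that $d(x_0, X_\alpha)$ is some finite number, so take $r$ at least that plus $D$. For (ii): $X_\alpha$ is quasi-isometric to $G_i$ by Theorem~\ref{thm:pdc_on_hull}, hence $G_i$ acts cocompactly on $X_\alpha$; it therefore suffices to check that one point of $X_\alpha$ lies in $N_r(X_\beta)$ for some $\beta \neq \alpha$, and again cocompactness of the $G_i$-action finishes it. To see that such a $\beta$ exists: in the Bass--Serre tree $T$, the vertex stabilized by (the conjugate of) $G_i$ is adjacent to another vertex whose stabilizer is either another one-ended vertex group or an infinite-ended (virtually free, hence containing a one-ended-vertex-group conjugate deeper down) piece — more carefully, since $G$ is one-ended on no factor but $\partial G$ is connected enough via the tree, any non-atomic component $S_\alpha$ is a limit of other non-atomic components in $\partial X$, so geodesics realizing points of $S_\alpha$ are approached by hulls $X_\beta$. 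I would phrase this using that the action on $T$ is cocompact and minimal, so every vertex has a neighbor; the neighbor's stabilizer, being infinite (edge groups are finite but vertex groups infinite), has $\partial$-points that are limits from non-atomic components, so $X_\beta$ comes within bounded distance. Statement (iii) is then immediate: (i) gives $x \in N_r(X_\alpha)$, and applying (ii)-type reasoning near the point of $X_\alpha$ closest to $x$ gives a $\beta$ with $x \in N_{r'}(X_\alpha) \cap N_{r'}(X_\beta)$ for a slightly larger $r'$; alternatively combine (i) applied at $x$ and at a nearby orbit point.

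\medskip

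\emph{Finiteness statements (iv)--(v).} Statement (iv) — uniform bound on $\diam(U_\alpha^\beta(r))$ — is the crucial one and the main obstacle. The point is that distinct non-atomic components $S_\alpha, S_\beta$ correspond to vertices $v_\alpha \neq v_\beta$ of $T$, and $X_\alpha, X_\beta$ have bounded coarse intersection: a bi-infinite geodesic with both endpoints in $S_\alpha$ and one with both endpoints in $S_\beta$ can fellow-travel only over a region mapping under $p: X \to T$ (coarsely) into the geodesic segment $[v_\alpha, v_\beta]$, and in fact into a bounded neighborhood of the finite edge groups along that segment, which are \emph{finite} — hence their stabilized regions in $X$ are bounded. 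Concretely: there is a coarsely Lipschitz $G$-equivariant map $X \to T$; $N_r(X_\alpha)$ maps into a bounded neighborhood of the ray/subtree hanging off $v_\alpha$, similarly for $\beta$; their intersection maps into a bounded neighborhood of the path between them, so $U_\alpha^\beta(r)$ lies in a bounded neighborhood of $p^{-1}(\text{bounded set})$, and since that bounded set contains only finitely-generated-by-finite-edge-group pieces, $\delta$-hyperbolicity forces a uniform diameter bound $B(r)$. I would make the ``bounded'' quantitative using $\delta$-thinness of triangles and that the edge groups are finite (so act with bounded orbits). Finally (v): given that each $U_\alpha^\beta(r)$ has diameter $< B(r)$, any $U_\gamma^\delta(r)$ meeting it lies in the $B(r)$-neighborhood of a fixed point; by properness of $X$ that region is compact, and by (iv) applied again each such $U_\gamma^\delta(r)$ is pinned down to bounded size, while a local-finiteness / discreteness argument — the hulls $X_\gamma$ through a compact region are finite in number because $G$ acts properly and the index set is $G$-equivariant with finite vertex-stabilizer-mod-hull — bounds the number of pairs $(\gamma,\delta)$, giving $N(r)$. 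The hard part, as noted, is (iv): making precise the claim that coarse intersections of weak convex hulls of distinct non-atomic boundary components are uniformly bounded, for which I would lean on the projection to the Bass--Serre tree and the finiteness of edge groups.
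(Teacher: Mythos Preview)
Your overall strategy is in the same spirit as the paper's --- exploit the Bass--Serre tree structure and the finiteness of edge groups --- but several steps have genuine gaps.

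\textbf{On (iv).} Your argument via a coarse projection $p:X\to T$ does not close. You write that $U_\alpha^\beta(r)$ lies in a bounded neighborhood of $p^{-1}(\text{bounded set in }T)$ and then conclude bounded diameter. But the preimage under $p$ of a bounded subset of $T$ --- say the geodesic $[v_\alpha,v_\beta]$ --- contains every vertex space along that path, and these are typically \emph{unbounded} (each one is a copy of some $\mathbb{H}^n_{\mathbb F}$ coarsely). So ``contained in $p^{-1}(\text{bounded})$'' gives you nothing. The paper avoids this by working in an explicit tree of spaces $\tilde Z$ for $G$ and observing that any point within $R$ of both $\tilde Z_v$ and $\tilde Z_u$ must be within $R$ of one of the two \emph{edge spaces} $\tilde Z_{e_1},\tilde Z_{e_2}$ at the ends of the path $[v,u]$, since any path realizing the short distance to $\tilde Z_v$ (resp.\ $\tilde Z_u$) must cross the separating edge space $\tilde Z_{e_1}$ (resp.\ $\tilde Z_{e_2}$). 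Edge spaces are finite, so this gives the diameter bound. You need this separation argument, not merely boundedness of the image in $T$.

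\textbf{On (v).} Your local-finiteness sketch (``the hulls $X_\gamma$ through a compact region are finite in number because $G$ acts properly'') is exactly the assertion that needs proof. Properness of the $G$-action tells you that finitely many \emph{group elements} move a point into a ball; it does not directly bound the number of \emph{cosets} $gG_i$ whose corresponding hull meets that ball, because the witness $y\in X_{\alpha_0}$ with $gy$ in the ball can be arbitrarily far from the basepoint. This is the content of \emph{bounded packing} for quasiconvex subgroups, which the paper invokes (Hruska--Wise). You should either cite this or supply an argument.

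\textbf{On (ii).} Your discussion of the neighbor vertex in $T$ is muddled: vertex groups in a Stallings--Dunwoody decomposition have at most one end, so the neighbor of a one-ended vertex is either one-ended or \emph{finite}, not ``infinite-ended''. When the neighbor is finite you must pass further along the tree to reach another one-ended vertex space; this is fine (the quotient graph is finite), but your sketch does not handle it. The paper sidesteps this by working in $\tilde Z$, where each vertex space is covered by neighborhoods of its incident edge spaces, and then enlarging $R$.
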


    \begin{proof}[Proof of Lemma~\ref{lemma_r_large}]
    If (i),  (ii), and (iii) hold for some $r > 0$ then they also hold for any $r' > r$.
    If (i) and (ii) hold for $r$, then (iii) holds for $2r$.
    Indeed,
    \begin{align*}
     X & \subseteq \bigcup_{\alpha \in I} N_r(X_{\alpha}) \\
      & \subseteq  \bigcup_{\alpha \in I} \bigcup_{\beta \in I -\{\alpha\}}  N_{2r}(X_{\alpha}) \cap N_{2r}(X_\beta) \\
      & = \bigcup_{\alpha \in I} \bigcup_{\beta \in I -\{\alpha\}} U_\alpha^\beta(2r)
    \end{align*}
     where the first line follows from (i), the second from (ii), and the third by definition.
     Since the action of $G$ preserves the sets $\{ X_\alpha \}_{\alpha \in I}$, and since the $G$ action on $X$ is cobounded, there exists $r$ such that $(i)$ holds.
     To obtain $r$ for (ii), and check (iv) and (v), we need to compare the geometry of $X$ to $G$.
     
    The group $G$ is the fundamental group of a compact graph of spaces $Z$ with associated Bass-Serre tree $T$.
     By abusing notation somewhat, we let $\tilde{Z}$ denote the $1$-skeleton of the universal cover of $Z$ equipped with the path metric, which is a model geometry for ${G}$.
     By an appropriate application of the Milnor-Schwartz lemma, there exists a ${G}$-equivariant quasi-isometry $f: \tilde{Z} \rightarrow X$. 
     After identifying $\partial \tilde{Z}$ with $\partial X$, 
     there exists $D' > 0$ such that for all $\alpha \in I$, 
     the weak convex hull $\tilde{Z}_\alpha$ of $S_\alpha$ is within Hausdorff distance $D'$ from a vertex space $\widetilde{Z}_v$ in $\widetilde{Z}$. 
     There are corresponding subsets $V_\alpha^\beta$ in $\tilde{Z}$ defined in a similar fashion to $U_\alpha^\beta$ for some sufficiently large constant~$R$.
        We will show the claims hold for the subsets $V_\alpha^\beta$ in $\tilde{Z}$ and then deduce they hold in $X$ as well. 
    
       To obtain (ii) for $\{V_\alpha^\beta\}$, observe that each vertex space $Z_v$ is contained in a finite neighborhood of its incident edge spaces, so we can choose $R$ to be sufficiently large such that 
       $$\tilde Z_v \subseteq \bigcup_{(u,v) \in ET} N_R(\tilde Z_u).$$
       Therefore, using the existence of the constant $D'$, we can ensure that $R$ is large enough that
       $$
       \tilde Z_\alpha \subseteq \bigcup_{\beta \in I -\{\alpha\}} N_R(\tilde Z_\beta).
       $$
       
       For (iv), to obtain the upper bound on the diameter of $V_{\alpha}^\beta$, we let $\tilde Z_v$ be the vertex space Hausdorff distance $D'$ from $\tilde Z_\alpha$, and $\tilde Z_u$ be the vertex space Hausdorff distance $D'$ from $\tilde Z_\beta$.
       Let $e_1$ be the edge in $T$ incident to $v$ that is closest to $u$, and let $e_2$ be the edge in $T$ incident to $u$ that is closest to $v$.
       Then
       \begin{align*}
        V_\alpha^\beta & = N_R(\tilde Z_\alpha) \cap N_R(\tilde Z_\beta) \\
                       & \subseteq N_{R + D'}(\tilde Z_v) \cap N_{R+D'}(\tilde Z_u) \\
                       & \subseteq N_{R + D'}(\tilde Z_{e_1}) \cup N_{R+D'}(\tilde Z_{e_2}).
       \end{align*}
       To see the final inequality, observe that if $x$ lies in $N_{R + D'}(\tilde Z_v) \cap N_{R+D'}(\tilde Z_u)$ then there are paths $\gamma_1$ and $\gamma_2$ of length at most $R+D'$ that respectively connect $\tilde Z_v$ to $x$ and $\tilde Z_u$ to $x$.
       By considering the tree of spaces decomposition we conclude that either $\gamma_1$ must pass through $\tilde Z_{e_1}$ or $\gamma_2$ must pass through $\tilde Z_{e_2}$, so $x$ lies in the $R+D'$ neighborhood of one of these edge spaces.
       Since edge spaces in $\tilde Z$ have finite diameter, (iv) follows.
       
    For (v) suppose that for all $m >0$ there is some $V_\alpha^\beta$ such that $V_{\gamma_i}^{\delta_i} \cap V_\alpha^\beta \neq \emptyset$ for distinct sets $ V_{\gamma_i}^{\delta_i}$ with $1 \leq i \leq m$ and $\gamma_i$ distinct from $\alpha$ or $\beta$.
         Then since all these subsets have diameter bounded above by a constant $B'$, we deduce that the subsets $\{V_{\gamma_i}^{\delta_i}\}$ are pairwise within distance $B'$ of each other.
         This implies that the subspaces $\{\tilde Z_{\gamma_i}\}$ are pairwise distance at most $B' + 2R$ apart.
         If $\tilde Z_{v_i}$ is the vertex space Hausdorff distance $D'$ from $\tilde Z_{\gamma_i}$, then we conclude that $\{\tilde Z_{v_i}\}$ are pairwise distance at most $B' + 2R+ 2D'$ apart.
         As $m$ was arbitrary, this violates the bounded packing of the one-ended vertex groups $\{G_{\gamma_i}\}$; see~\cite{HruskaWise09} for definition and details of bounded packing.
    
    We now deduce (ii), (iv), and (v) for $X$, using standard quasi-isometry and $\delta$-hyperbolicity arguments, which we include for the benefit of the reader.
    The sufficient size of the constant $r \geq 0$ is determined by $R$, the quasi-isometry constants $(\lambda, \epsilon)$ for the map $f: \tilde Z \rightarrow X$, and $\delta \geq 0$ large enough so that both $X$ and $\widetilde{Z}$ are $\delta$-hyperbolic.
    First, there exists a constant $D = D(\lambda,\epsilon,\delta)$ so that for all $\alpha \in I$, the Hausdorff distance between $f(\tilde{Z}_\alpha)$ and $X_\alpha$ is at most $D$.
    Indeed, by the Morse Lemma~\cite[Theorem III.H.1.7]{bridsonhaefliger}, there exists a $D_1 = D_1(\lambda, \epsilon, \delta)$ sufficiently large such that a bi-infinite $(\lambda,\epsilon)$-quasi-geodesic lies at distance at most $D_1$ from an actual geodesic. 
    Thus, $f(\tilde{Z}_\alpha) \subseteq N_{D_1}(X_\alpha)$. 
    For the converse inclusion we consider a quasi-inverse $\bar f$ of $f$, which has quasi-isometry constants $( \bar \lambda, \bar \epsilon)$ determined by $(\lambda,\epsilon)$.
    Thus, by the Morse Lemma, there exists $D_2 = D_2(\lambda, \epsilon, \delta) > 0$, such that $\bar{f}(X_\alpha) \subseteq N_{D_2}(\tilde{Z}_\alpha)$.
    Then, by the definition of quasi-inverse, there exists a constant $k = k(\lambda, \epsilon)$ so that $$ X_\alpha \subseteq N_k\bigl(f \circ \bar{f}(X_\alpha)\bigr) \subseteq N_k\bigl(f(N_{D_2}(\tilde{Z}_\alpha))\bigr).$$
    so $X_\alpha \subseteq N_D(f(\tilde{Z}_\alpha))$ if $D > \lambda D_2 + \epsilon +k$. Thus we take $D > \max\{ D_1, \lambda D_2 + \epsilon +k \}$.
    
    Since (ii) holds for $\tilde{Z}$,
    we can deduce that
    \begin{align*}
     X_\alpha & \subseteq N_D(f(\tilde Z_\alpha)) \\
              & \subseteq N_D\Bigg(\bigcup_{\beta \in I - \{ \alpha \} } f(N_R(\tilde Z_\beta)) \Bigg)\\
              & \subseteq N_D\Bigg(\bigcup_{\beta \in I - \{ \alpha \} } N_{\lambda R + D +\epsilon}(X_\beta) \Bigg),
    \end{align*}
    so (ii) holds for $X$ with $r > \lambda R + 2D +\epsilon$.
      
      To see (iv) and (v) for $X$ observe that
      \begin{align*}
       \bar f(U_\alpha^\beta(r)) & = \bar f(N_r(X_\alpha) \cap N_r(X_\beta)) &\\
                            & \subseteq \bar f(N_r(X_\alpha)) \cap \bar f(N_r(X_\beta)) & \\
                            & \subseteq N_{\bar \lambda r + \bar \epsilon}(\bar f(X_\alpha)) \cap N_{\bar \lambda r + \bar \epsilon}(\bar f(X_\beta)) \\
                            & \subseteq N_R(\tilde Z_\alpha) \cap N_R( \tilde Z_\beta) = V_\alpha^\beta(R)
      \end{align*}
      where $R  > \bar \lambda r + \bar \epsilon +D_2$.
      If $\diam(U_\alpha^\beta(r))$ were unbounded over $\alpha,\beta \in I$, then $\diam(V_\alpha^\beta(R))$ would be unbounded too.
      If $U_\alpha^\beta(r)$ intersects arbitrarily large collections of distinct sets $U_\gamma^\delta(r)$, then $V_\alpha^\beta(R)$ would intersect the corresponding arbitrarily large collection of distinct sets of the form $V_\gamma^\delta(R)$.
    \end{proof}

     Define a (1-dimensional) simplicial complex $\cY$ as follows. Let $r \in \R$ be sufficiently large, as to satisfy the conclusions of Lemma~\ref{lemma_r_large}. 
     Let $\cY$ be the $1$-skeleton of the nerve of the cover of $X$ by the sets $\{ U_\al^\beta \, | \, U_\al^\be \neq \emptyset\}$.
     That is, the vertices of $\cY_{\alpha}$ are in one-to-one correspondence with elements of the set $\{ U_\al^\be \, | \, U_\al^\be \neq \emptyset\}$, and there is an edge $\{ U_{\al}^\be, U_\gamma^\delta \}$ if and only if $U_\alpha^\beta \cap U_\gamma^\delta \neq \emptyset$.
     We view $\cY$ as a metric space by equipping each edge with the Euclidean metric, such that each edge has length one. 
     As $H$ acts by isometries on $X$ and permutes the non-atomic components of $\p X$, we deduce that $H$ preserves the open cover $\{ U_\al^\beta \, | \, U_\al^\be \neq \emptyset\}$ so there is an induced $H$-action on $\cY$. 
 
      \begin{lemma} \label{lemma_y_qi}      
        The graph $\cY$ is connected and locally finite. The group $G$ acts geometrically on $\cY$.
        The $H$-action on $X$ is \emph{quasi-conjugate} to the $H$-action on $\cY$.
        That is, there is a quasi-isometry $\phi : X \rightarrow \cY$ and a constant $L >0$ such that for all $x\in X$ $$ d_{\cY}(h \cdot \phi(x), \phi(h\cdot x)) < L.$$ 
       \end{lemma}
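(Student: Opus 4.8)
The plan is to build the quasi-isometry $\phi\colon X\to\cY$ directly from the open cover and to verify each of the four assertions (connectedness, local finiteness, geometric $G$-action, quasi-conjugacy) using the five properties of $r$ supplied by Lemma~\ref{lemma_r_large}. First, I would define $\phi$ by making an arbitrary choice: for each $x\in X$, pick some pair $\alpha,\beta\in I$ with $x\in U_\alpha^\beta$ (possible by property (iii)) and set $\phi(x)$ to be the vertex of $\cY$ corresponding to $U_\alpha^\beta$. This is not canonical, but any two valid choices land on vertices whose corresponding sets intersect (they both contain $x$), hence are adjacent in $\cY$, so $\phi$ is well-defined up to bounded error --- which is all a quasi-isometry needs.

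Next I would establish the coarse Lipschitz bound in both directions. If $d_X(x,y)\le 1$, then $\phi(x)$ and $\phi(y)$ correspond to sets $U_\alpha^\beta$, $U_\gamma^\delta$, each of diameter $<B$ by property (iv); since $x,y$ are close and lie in the respective sets, a chain argument using property (v) (bounded valence) shows $\phi(x),\phi(y)$ are joined by a path of uniformly bounded length in $\cY$. Conversely, adjacent vertices in $\cY$ correspond to intersecting sets, each of diameter $<B$, so choosing a point of $X$ in each set moves it by at most $2B$; this gives a coarse inverse $\psi\colon\cY\to X$. Coarse surjectivity of $\phi$ is immediate because every nonempty $U_\alpha^\beta$ contains a point of $X$ and hence is hit (up to adjacency) by $\phi$, and $\psi\circ\phi$, $\phi\circ\psi$ are within bounded distance of the identities by properties (iv) and (v). Connectedness of $\cY$ then follows: $X$ is geodesic hence connected, $\phi$ is a quasi-isometry with connected source and coarsely dense image, and adjacency in $\cY$ already accounts for the bounded jumps, so $\cY$ is connected. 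Local finiteness is exactly property (v): each vertex $U_\alpha^\beta$ is adjacent only to the boundedly-many $U_\gamma^\delta$ that meet it.

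For the $H$-action (and hence the $G$-action), note $H=\Isom(X)$ permutes the non-atomic components $\{S_\alpha\}$ of $\partial X$, hence permutes the weak convex hulls $\{X_\alpha\}$, hence permutes the cover $\{U_\alpha^\beta\}$, inducing a simplicial $H$-action on $\cY$; this was already observed before the lemma. The map $\phi$ is then $H$-quasi-equivariant: for $h\in H$ and $x\in U_\alpha^\beta$ we have $hx\in U_{h\alpha}^{h\beta}=h\cdot U_\alpha^\beta$, so $h\cdot\phi(x)$ and $\phi(hx)$ correspond to sets both containing $hx$ --- they are equal or adjacent, giving $d_\cY(h\cdot\phi(x),\phi(h\cdot x))<L$ with $L$ depending only on $B$ and $N$. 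Finally, since $G$ acts geometrically on $X$ and $\phi$ is a quasi-isometry quasi-conjugating the actions, $G$ acts geometrically on $\cY$: properness transfers because $\phi$ is coarsely proper and quasi-equivariant, and cocompactness transfers because $\phi$ has coarsely dense image and $G$ acts coboundedly on $X$ (one can invoke the Milnor--Schwarz lemma in the form of Lemma~\ref{lemma:subgroup}, or argue directly that $\cY/G$ has boundedly many vertex orbits).

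The main obstacle is the coarse Lipschitz estimate from $X$ to $\cY$: unlike the reverse direction, which only uses that edges correspond to intersecting bounded-diameter sets, controlling how fast $\phi$ can ``hop'' between vertices as $x$ moves a bounded distance requires simultaneously invoking the uniform diameter bound (iv), the bounded-intersection-number bound (v), and the covering property (iii) to produce a chain of overlapping sets of uniformly bounded length. Everything else --- connectedness, local finiteness, the quasi-conjugacy inequality, and transfer of the geometric action --- is then a formal consequence of being a quasi-isometry that is quasi-equivariant, modulo standard $\delta$-hyperbolicity and Milnor--Schwarz bookkeeping.
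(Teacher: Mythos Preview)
Your approach is correct and essentially matches the paper's argument (which, incidentally, contains two versions of this proof). Your explicit construction of $\phi$ by sending $x$ to some vertex $U_\alpha^\beta$ containing it, and the quasi-conjugacy check (the vertices $h\cdot\phi(x)$ and $\phi(hx)$ correspond to sets both containing $hx$, hence are equal or adjacent, giving $L=2$), agree with the paper verbatim.

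The one place the paper is sharper than your sketch is the $X\to\cY$ Lipschitz bound, which you correctly flagged as the main obstacle but resolved only by a vague ``chain argument using (iv), (v)''. Property (v) alone does not suffice: you need to know that \emph{nearby} points of $X$ lie in a common $U_\alpha^\beta$, not just that few $U_\gamma^\delta$ meet a given one. The clean resolution is Lebesgue's number lemma: since $G$ acts cocompactly on $X$ and $\{U_\alpha^\beta\}$ is an open cover, there is $\ell>0$ so that any set of diameter $<\ell$ lies in some $U_\alpha^\beta$. Subdividing a geodesic from $x$ to $x'$ into $\lceil d_X(x,x')/\ell\rceil$ pieces then gives a chain of pairwise-intersecting cover sets and the bound $d_\cY(\phi(x),\phi(x'))\le \ell^{-1}d_X(x,x')+2$.

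The paper's other proof takes a slightly different route you might find instructive: it verifies the geometric $G$-action on $\cY$ \emph{directly} (stabilizers are finite by (iv) since they stabilize a bounded set in $X$; cocompactness follows since every orbit of $U_\alpha^\beta$ meets a fixed compact $K\subset X$ and only finitely many do by (v)), and then obtains the $H$-quasi-conjugacy from a general lemma---a mild upgrade of \v{S}varc--Milnor asserting that whenever $G\le H$ acts on proper geodesic spaces $X$ and $Y$ with the $G$-actions geometric, there is an $H$-equivariant quasi-isometry $X\to Y$. This avoids building $\phi$ by hand.
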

       
       The existence of the quasi-conjugacy will be due to the following, more general lemma:
       
       \begin{lemma} \label{lem:MSenvelope}
 Let $H$ be a group acting on proper geodesic metric spaces $X$ and $Y$.
 Let $G \leqslant H$ be a subgroup such that the restrictions of the action of $H$ to $G$ on $X$ and $Y$ are geometric.
 Then, there is an $H$-equivariant quasi-isometry $f: X \to Y$.
\end{lemma}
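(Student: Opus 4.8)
The plan is to fix basepoints and transport one space onto the other through the common group $G$, using the Milnor–Schwarz lemma to compare each of $X$ and $Y$ with a fixed word metric on $G$, and then to upgrade the resulting quasi-isometry to an $H$-equivariant one by a standard ``coarse averaging / equivariant replacement'' argument. In detail: pick basepoints $x_0 \in X$ and $y_0 \in Y$. Since the $G$-action on each of $X$ and $Y$ is geometric, the orbit maps $g \mapsto g\cdot x_0$ and $g \mapsto g \cdot y_0$ are quasi-isometries from $G$ (with any word metric from a finite generating set) to $X$ and to $Y$ respectively; composing one with a quasi-inverse of the other yields a quasi-isometry $f_0 : X \to Y$ which is $G$-coarsely-equivariant, i.e.\ there is a constant $C_0$ with $d_Y(f_0(g\cdot x), g\cdot f_0(x)) < C_0$ for all $g \in G$, $x \in X$. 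This is the content of the usual proof that two geometric model spaces of the same group are equivariantly quasi-isometric.

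The remaining point is to make $f_0$ equivariant for the whole of $H$, not just $G$. First I would observe that $G$ has finite index in $H$: the restriction of the $H$-action on $X$ to $G$ is cocompact, so by Lemma~\ref{lemma:subgroup} (with the roles of $G$ and $H$ there played by $H$ and $G$ here) the subgroup $G$ is finite index in $H$. Choose coset representatives $h_1, \dots, h_n$ for $G \backslash H$. The idea is then to ``spread'' $f_0$ over the cosets: since each $h_i$ is a fixed isometry, the map $x \mapsto h_i \cdot f_0(h_i^{-1} \cdot x)$ is again a quasi-isometry $X \to Y$ at bounded distance from $f_0$ on any $G$-orbit, and one can define $f$ by selecting, for each point $x$ in a fixed fundamental domain for the $G$-action on $X$, the value $f_0(x)$ and extending $H$-equivariantly — more precisely, fix a set-theoretic section $s \colon X \to G$ for a $G$-fundamental domain, and on the full space set $f(x) := s(x) \cdot f_0(s(x)^{-1}\cdot x)$ whenever this is well-defined up to bounded error, then post-compose with the analogous $H$-level construction. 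The cleanest formulation: define $f$ to be any $H$-equivariant map that agrees with $f_0$ on a chosen point of each $H$-orbit meeting the $G$-fundamental domain; $H$-equivariance is then automatic by construction, and one checks $f$ is still a quasi-isometry and still at bounded distance from $f_0$ because only finitely many isometries $h_i$ are used to move between cosets and each moves points a bounded amount relative to the (cocompact, hence bounded-diameter-quotient) structure.

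The main obstacle — and the step requiring care — is the last one: verifying that the $H$-equivariant replacement $f$ is genuinely a quasi-isometry and not merely a coarsely Lipschitz map, and that the implied constants are uniform. The subtlety is that ``pick a value on each orbit and extend equivariantly'' can produce a wildly discontinuous map; what saves us is (i) $[H:G] < \infty$, so only finitely many ``twists'' $h_i$ occur, each an honest isometry; (ii) the $G$-action on $X$ is cocompact, so every $x$ is within a bounded distance $R$ of the chosen orbit point, and isometries preserve distances; and (iii) $f_0$ is already $G$-coarsely-equivariant with constant $C_0$. Combining these, for $x, x' \in X$ one estimates $d_Y(f(x), f(x'))$ by pushing both points to the $G$-fundamental domain, applying $f_0$ there, and pushing back, picking up at most $2R$ from (ii), $2C_0$ from (iii) per step, and absorbing the finitely many $h_i$-contributions into an additive constant $\max_i \operatorname{diam}(\text{relevant bounded sets})$. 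The resulting inequalities show $f$ is a $(\lambda, \epsilon')$-quasi-isometry with $\epsilon'$ depending only on the original data and on $n = [H:G]$, and $H$-equivariance is immediate, completing the proof.
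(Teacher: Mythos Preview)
Your overall strategy matches the paper's: build a $G$-equivariant quasi-isometry $f_0=f_Y\circ\bar f_X$ from the \v{S}varc--Milnor lemma, then replace it by an $H$-equivariant map by choosing a value on one point of each $H$-orbit and extending.  The divergence, and the genuine gap in your argument, is the claim that $[H:G]<\infty$.  Lemma~\ref{lemma:subgroup} requires the \emph{larger} group to act properly on $X$; in the hypotheses of Lemma~\ref{lem:MSenvelope} only the subgroup $G$ is assumed geometric, and nothing forces $H$ to act properly.  The example $G=\mathbb{Z}\leqslant H=\mathbb{R}$ acting by translations on $X=Y=\mathbb{R}$ already satisfies all hypotheses with $[H:G]=\infty$.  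Every estimate you make afterwards (``only finitely many twists $h_i$ occur'', absorbing $\max_i$ contributions into an additive constant, uniformity depending on $n=[H:G]$) rests on this finite-index claim, so the verification that $f$ is a quasi-isometry does not go through as written.

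The paper's proof avoids finite index entirely.  The point it uses instead is that the \v{S}varc--Milnor orbit maps and the quasi-inverse $\bar f_X$ can be taken \emph{strictly} $G$-equivariant (not merely coarsely, as you wrote), so $f_0$ itself is strictly $G$-equivariant.  The $H$-equivariant extension $f$ is then automatically $G$-equivariant because $G\leqslant H$, so the discrepancy $d(f(p),f_0(p))$ is $G$-invariant and one only has to bound it on a single ball $N_r(x_0)$ with $G\cdot N_r(x_0)=X$.  On that ball both maps land near $f_Y(e)$, giving a uniform bound and hence the quasi-isometry.  No coset representatives of $H/G$ enter the argument at all.
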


\begin{proof}
 The \v{S}varc-Milnor Lemma gives $G$-equivariant quasi-isometries:
 \[
  f_X : \Cay(G,S) \to X \; \; \; \textrm{ and } \; \; \; f_Y : \Cay(G, S) \to Y. 
 \]
 So, there is a $G$-equivariant quasi-isometry $f_Y \circ \bar{f}_X: X \rightarrow Y$, where $\bar{f}_X$ is a $G$-equivariant quasi-inverse for $f_X$. 
 However, this map is not necessarily $H$ equivariant.
 
 Let $x_0 = f_X(e)$.
 There exists $r >0$ such that the $r$-neighbourhood of $G\cdot x_0$ covers $X$, and there exists $R>0$ so that $f_Y \circ \bar{f}_X(N_r(x_0)) \subseteq N_R(f_Y(e))$.
 For each $H$-orbit in $X$, choose a representative $x\in N_r(x_0)$.
 Define $f(hx) := h \bigl( f_Y \circ \bar{f}_X(x) \bigr)$.
 Then $f$ is $H$-equivariant, and it is coarsely equivalent to $f_Y \circ \bar{f}_X$ since if
 $x \in X$, then there is $g \in G$ such that $gx \in N_r(x_0)$, so
 \[
  d(f(x), f_Y \circ \bar{f}_X(x)) = d(f(gx), f_Y \circ \bar{f}_X(gx)) \leq 2R.
 \]
\end{proof}

       \begin{proof}[Proof of Lemma~\ref{lemma_y_qi}]
        The space $\cY$ is connected by \emph{(iii)} of Lemma~\ref{lemma_r_large}, and locally finite by \emph{(v)} of Lemma~\ref{lemma_r_large}. 
        To see that $G$ acts properly on $\cY$ observe that $\Stab_G(U_\alpha^\beta(r))$ stabilizes a finite diameter set in $X$, by \emph{(iv)} of Lemma~\ref{lemma_r_large}, so the properness of $G$ on $X$ implies that $\Stab_G(U_\alpha^\beta(r))$ is finite, and thus the action on $\cY$ is proper.
        Cocompactness follows by observing that cocompactness of $G$ on $X$ implies that there exists some compact set $K \subseteq X$ such that $G K = X$.
        Thus every $G$-orbit of $U_\alpha^\beta$ has at least one representative intersecting $K$.
        By \emph{(v)} of Lemma~\ref{lemma_r_large}, only finitely many $U_\alpha^\beta(r)$ intersect $K$, which allows us to conclude that there are only finitely many vertex orbits in $\cY$, so cocompactness of the $G$ action follows from local finiteness of $\cY$.
        Finally the actions are quasi-conjugate by Lemma~\ref{lem:MSenvelope}.
       \end{proof}

      \begin{lemma}       
       \begin{enumerate} 
        \item The graph $\cY$ is connected and locally finite.
        \item The $H$-action on $X$ is \emph{quasi-conjugate} to the $H$-action on $\cY$.
        That is, there is a quasi-isometry $\phi : X \rightarrow \cY$ and a constant $B >0$ such that for all $x\in X$ $$ d_{\cY}(h \cdot \phi(x), \phi(h\cdot x)) < B.$$ 
       \end{enumerate}
       Consequently, $G$ acts geometrically on $\cY$. 
      \end{lemma}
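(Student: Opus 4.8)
The statement essentially reproduces Lemma~\ref{lemma_y_qi}, so the plan is to assemble the proof directly from Lemma~\ref{lemma_r_large} and Lemma~\ref{lem:MSenvelope}, with no new idea required. First I would dispatch part (1). Connectedness of $\cY$ comes from part (iii) of Lemma~\ref{lemma_r_large}: the sets $\{U_\alpha^\beta \mid U_\alpha^\beta \neq \emptyset\}$ cover $X$, so given two points of $X$ one tracks a geodesic between them, extracts the finitely many cover elements it meets, and checks that consecutive ones overlap (since they are genuine neighborhoods, hence have nonempty interior, and $X$ is geodesic), producing an edge-path in the nerve between the corresponding vertices. Local finiteness is immediate from part (v): each vertex $U_\alpha^\beta$ of $\cY$ is adjacent to at most $N=N(r)$ other vertices.

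Next I would establish that $G$ acts geometrically on $\cY$, which is needed both for the final sentence of the lemma and to license the use of Lemma~\ref{lem:MSenvelope}. For properness, $\Stab_G(U_\alpha^\beta)$ preserves the subset $U_\alpha^\beta \subseteq X$, which has diameter at most $B(r)$ by part (iv) of Lemma~\ref{lemma_r_large}; properness of the $G$-action on $X$ then forces $\Stab_G(U_\alpha^\beta)$ to be finite, so the $G$-action on the vertex set of $\cY$ has finite point stabilizers. For cocompactness, pick a compact $K \subseteq X$ with $GK=X$; every $G$-orbit of a set $U_\alpha^\beta$ has a representative meeting $K$, and by part (v) only finitely many $U_\alpha^\beta$ meet $K$, so $\cY$ has finitely many vertex orbits, and together with local finiteness this yields cocompactness of the $G$-action.

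For part (2), I note that $H=\Isom(X)$ acts on $X$ and permutes the non-atomic components of $\partial X$, hence permutes the cover $\{U_\alpha^\beta\}$ and so acts on $\cY$. Since $G \leqslant H$ acts geometrically on both $X$ (by hypothesis) and $\cY$ (just shown), Lemma~\ref{lem:MSenvelope} produces an $H$-equivariant quasi-isometry $\phi : X \to \cY$. In particular $d_\cY(h\cdot\phi(x), \phi(h\cdot x)) = 0 < B$ for any choice of $B>0$, so the $H$-actions on $X$ and $\cY$ are quasi-conjugate. The concluding sentence, that $G$ acts geometrically on $\cY$, was already verified in the previous paragraph (and would in any case follow by transporting the geometric $G$-action on $X$ across the coarsely $G$-equivariant quasi-isometry $\phi$).

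I do not expect any genuine obstacle: every ingredient is a previously proved lemma or a routine properness/cocompactness argument. The only spot that warrants a careful sentence is the connectedness claim, where one must confirm that cover elements met successively along a geodesic actually intersect — using that the $U_\alpha^\beta$ are $r$-neighborhood intersections (hence relatively open-ish sets with nonempty interior) and that geodesics in $X$ can be covered by overlapping pieces.
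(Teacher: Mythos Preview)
Your proposal is correct. It follows exactly the approach the paper uses for Lemma~\ref{lemma_y_qi} (which, as you observe, this statement duplicates): first verify directly that $G$ acts geometrically on $\cY$ via parts (iv) and (v) of Lemma~\ref{lemma_r_large}, then invoke Lemma~\ref{lem:MSenvelope} to obtain an $H$-equivariant quasi-isometry.

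The proof the paper attaches to \emph{this} particular restatement, however, is different and more hands-on. It defines $\phi(x)$ to be the vertex corresponding to some $U_\alpha^\beta$ containing $x$, verifies by hand that $\phi$ is a quasi-isometry (the diameter bound (iv) gives one inequality, a Lebesgue-number argument along geodesics gives the other, and quasi-surjectivity is immediate), and then observes that $h\cdot\phi(x)$ and $\phi(h\cdot x)$ are always adjacent vertices, so one may take $B=2$. The geometric $G$-action on $\cY$ is then read off as a consequence of the quasi-conjugacy rather than established beforehand. Your route is cleaner once Lemma~\ref{lem:MSenvelope} is available; the paper's explicit construction buys a concrete description of $\phi$ and an explicit constant, and reverses the logical order so that geometricity of the $G$-action on $\cY$ comes out at the end rather than being needed as input.
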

      
      \begin{proof}
       The space $\cY$ is connected by \emph{(iii)} of Lemma~\ref{lemma_r_large}, and locally finite by \emph{(v)} of Lemma~\ref{lemma_r_large}. 
        
        We construct a quasi-isometry $\phi$ as follows: for $x \in X$ let $\phi(x)$ be the vertex corresponding to some choice of $U_{\alpha}^\beta$ containing $x$.
        (This is possible by \emph{(iii)} in Lemma~\ref{lemma_r_large}.)
        To verify that $\phi$ is a quasi-isometry we first observe that $$d_X(x, x') \leq 2rd_\cY(\phi(x), \phi(x')) +2r$$ for $x,x' \in X$.
        Indeed, let $n = d_\cY(\phi(x), \phi(x'))$ and $\phi(x) = v_0, \ldots, v_n = \phi(x')$ are the vertices in a geodesic, where $v_i$ corresponds to $U_{\alpha_i}^{\beta_i}$. Then, since $U_{\alpha_i}^{\beta_i} \cap U_{\alpha_{i+1}}^{\beta_{i+1}} \neq \emptyset$ and by the upper bound $\diam(U_{\alpha}^\beta) < B$, we can construct a path of length at most $2rn+2r$ from $x \in U_{\alpha_1}^{\beta_1}$ to $x' \in U_{\alpha_n}^{\beta_n}$.
        
        To obtain an upper bound on $d_\cY(\phi(x), \phi(x'))$ in terms of $d_X(x, x')$, observe that the action of $G$ on $X$ is cocompact, and $\{U_{\alpha}^{\beta}\}$ is an open cover of $X$, so we can apply Lesbesgue's number lemma to obtain $\ell> 0$  such that each subset of diameter less than $\ell$ is contained in some $U_\alpha^\beta$.
        We can then subdivide a geodesic joining $x$ to $x'$ into $m := d_X(x,x')\slash \ell$ intervals.
        Let $U_{\alpha_i}^{\beta_i}$ contain the $i$-th interval and observe that the vertex given by $\phi(x)$ must be adjacent to $U_{\alpha_1}^{\beta_1}$ and the vertex given by $\phi(x')$ must be adjacent to $U_{\alpha_m}^{\beta_m}$.
        Thus, we conclude that
        \[
         d_\cY(\phi(x), \phi(x')) \leq \frac{1}{\ell}d_X(x,x') + 2
        \]
    Finally, we deduce that $\phi$ is quasi-surjective, since given a vertex corresponding to the subset $U_\alpha^\beta$, if $x \in  U_\alpha^\beta$, then $\phi(x)$ is adjacent to $U_\alpha^\beta$. 
    Thus $\tilde{Z}$ is contained in the $1$-neighborhood of $\phi(X)$.

    To see that $\phi$ quasi-conjugates the respective $H$-actions, let $x \in X$ and let $\phi(x) = U_\alpha^\beta$ and let $\phi(h\cdot x) = U_\gamma^\delta$.
    Then since $h \cdot x \in hU_\alpha^\beta \cap U_\gamma^\delta$ we conclude that $h \cdot \phi(x)$ and $\phi(h\cdot x)$ are adjacent, so we can let $B =2$. Since $G$ acts geometrically on $X$, the group $G$ acts geometrically on $\cY$. 
      \end{proof}

      We extend $\cY$ to a simply connected $2$-complex  $\hat \cY$ as follows. 
      The Milnor-Schwartz lemma combined with Lemma~\ref{lemma_y_qi} proves that the space $\cY$ is quasi-isometric to $G$ and is therefore a Gromov hyperbolic space.
      Let $P_D(\cY)$ denote the \emph{Rips complex} of $\cY$: the simplicial complex with vertex set $\cY^{(0)}$, the vertex set of the graph $\cY$, and $n$-simplices given by all subsets of vertices of $\cY$ of diameter at most $D$. 
      As $\cY$ is $\delta$-hyperbolic, for all $D > 4\delta +3 $ the complex $P_D(\cY)$ is a locally finite, contractible complex~\cite[III.$\Gamma$ Prop 3.23]{bridsonhaefliger}.  
      Let $\hat \cY$ be the $2$-skeleton of $P_D(\cY)$ for some $D > 4 \delta + 3$.
      The $H$-action on $\cY$ extends to $P_D(\cY)$ and $\hat \cY$, and the embedding of $\cY$ into $\hat \cY$ is a quasi-isometry that will quasi-conjugate the respective actions.
      We let $\hat \phi : X \rightarrow \hat \cY$ denote the resulting quasi-conjugacy of the $H$-actions.

   \subsection{Tracks and a new tree of spaces model geometry}
   
    We employ the notion of \emph{tracks}, as first defined by Dunwoody~\cite{dunwoody85}. 
  
	\begin{defi}
	Let $K$ be a simplicial $2$-complex.
	 A {\it track} $\tau$ is a connected subset of $K$ (or rather its topological realization) such that
	 \begin{enumerate}
	  \item for each $2$-simplex $\sigma$ of $K$, the intersection of $\tau$ with $\sigma$ is the union of finitely many disjoint straight lines joining distinct edges of $\sigma$.
	  \item If $e$ is a $1$-simplex in $K$ not contained in a $2$-simplex, then either $\tau$ does not intersect $e$, or $\tau$ is a single point in the interior of $e$. 
	 \end{enumerate}
	\end{defi}

	We will employ the following theorem of Dunwoody~\cite{dunwoody85} in the form stated by Mosher--Sageev--Whyte~\cite{moshersageevwhyteI}. Alternatively, consult~\cite[Chapter 20]{DrutuKapovich}.

    \begin{thm} \cite{dunwoody85} \cite[Theorem 15]{moshersageevwhyteI} \label{thm:dunwoodyTracks}
     Let $K$ be a locally finite, simply connected, simplicial $2$-complex with cobounded isometry group.
     There exists a disjoint union of finite tracks $\tau = \bigsqcup \tau_i$ in $K$ invariant under the action of $\Isom(K)$ such that the closure of each component of $K - \tau$ has at most one end.
    \end{thm}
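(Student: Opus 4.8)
The plan is to recover the required track system by running \emph{Dunwoody's resolution}. The first observation is that in a simply connected complex every finite track is two-sided (as $H^1(K;\Z/2\Z)=0$) and separates $K$: a non-separating finite track would, by taking intersection numbers with loops, force $H_1(K;\Z)\neq 0$, which is impossible. Attach to each finite track $\tau$ the \emph{weight} $w(\tau)\in\N$ equal to the number of points of $\tau\cap K^{(1)}$ (equivalently the number of bands of $\tau$ crossing the $2$-simplices of $K$); this quantity is $\Isom(K)$-invariant. Call $\tau$ \emph{essential} if both complementary components are infinite. If $K$ itself has at most one end we take $\tau=\emptyset$; otherwise $K$ has at least two ends and we must build a nonempty invariant pattern.

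First I would produce one essential finite track. Coboundedness of $\Isom(K)$ together with the existence of at least two ends gives a finite edge cut of $K^{(1)}$ separating it into two infinite pieces; Dunwoody's desingularization lemma converts the $\Z/2\Z$-coboundary defining such a cut into a finite track, and after discarding bounded and parallel components we obtain an essential finite track. Among all essential finite tracks, fix one of minimal weight, say $\tau_0$.

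Next I would upgrade $\tau_0$ to an invariant pattern $\mathcal T:=\bigcup_{g\in\Isom(K)}g\tau_0$. The crucial claim is that two translates $g\tau_0,g'\tau_0$ are either equal or disjoint. If they crossed transversally, then $\tau_0\cap g\tau_0$ would be a nonempty finite union of arcs, and a cut-and-paste (exchange) argument along an outermost such arc produces finite tracks $\tau',\tau''$ with $w(\tau')+w(\tau'')=w(\tau_0)+w(g\tau_0)=2w(\tau_0)$ and strictly fewer intersection arcs; the standard analysis shows one of $\tau',\tau''$ remains essential, and iterating this exchange contradicts the minimal choice of $\tau_0$ unless no crossings ever occur (the residual case of parallel but unequal translates is ruled out because $w$ is orbit-constant while $K$ is locally finite). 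Since $w$ is constant on the orbit and $K$ is locally finite, only finitely many translates meet any given $2$-simplex, so $\mathcal T$ is a locally finite, $\Isom(K)$-invariant disjoint union of finite tracks, as required.

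Finally I would iterate and appeal to accessibility. Cutting $K$ along $\mathcal T$ yields finitely many orbits of complementary components, each locally finite and each with setwise stabiliser in $\Isom(K)$ acting coboundedly; one arranges (refining the pattern if necessary) that every multi-ended component again satisfies the hypotheses of the theorem, so the construction above may be repeated inside it, and the union over all rounds of the patterns produced stays $\Isom(K)$-invariant. The substantive point --- indeed the content of the theorem --- is that this process stops: this is Dunwoody's accessibility theorem, which for a simply connected, locally finite complex with cobounded symmetry group bounds the number of $\Isom(K)$-orbits of pairwise non-parallel essential tracks that a single invariant pattern can contain. A pattern attaining that bound admits no further essential finite track in any complementary component, i.e. every component of $K-\tau$ has at most one end. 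I expect the desingularization, the exchange argument and the bookkeeping of the iteration to be routine though delicate, and the accessibility bound to be the real obstacle --- it is the deep ingredient that this theorem is essentially repackaging.
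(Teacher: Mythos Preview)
The paper does not give a proof of this theorem: it is quoted as a black-box result from Dunwoody's accessibility paper, in the formulation of Mosher--Sageev--Whyte, and is then applied without further argument in the proof of Theorem~\ref{intro_nicespace}. So there is nothing in the paper to compare your proposal against.

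That said, your sketch is broadly faithful to Dunwoody's actual argument: minimal-weight essential tracks, the exchange (cut-and-paste) argument to make the $\Isom(K)$-orbit disjoint, and accessibility to terminate the process. Two points would need care if you were to flesh this out. First, the parenthetical about ``parallel but unequal translates'' is muddled: non-crossing distinct translates are perfectly permissible in the pattern and need not be ruled out; what the exchange argument eliminates is only transverse crossing. Second, the iteration as you phrase it---``every multi-ended component again satisfies the hypotheses of the theorem''---is not how the argument is organised, because the complementary pieces after cutting along a track need not themselves be simply connected, so one cannot literally re-apply the theorem to them. Dunwoody instead works globally in $K$: one enlarges the invariant pattern by adjoining further minimal-weight essential tracks that are nested with (i.e.\ disjoint from) the tracks already chosen, and the accessibility bound is on the number of $\Isom(K)$-orbits of pairwise non-parallel tracks in such a nested family. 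The substance is what you describe, but the bookkeeping is cleaner when kept inside the original complex.
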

    
    \begin{proof}[Proof of Theorem~\ref{intro_nicespace}]
     As $\hat{\cY}$ is a locally finite, simply connected, simplicial $2$-complex with cobounded isometry group, Theorem~\ref{thm:dunwoodyTracks} yields an $H$-equivariant set of finite tracks $\tau = \bigsqcup_{i \in J} \tau_i$ such that the closure of each component $\hat \cY - \tau$ has at most one end.
     Let $\cT$ be the dual tree to this set of tracks. 
     Each vertex $v$ in $\cT$ corresponds to a component $\hat \cY_v$ of $\hat \cY - \tau$.
     Reindex the set of tracks so that each edge $e$ in $\cT$ corresponds to a track $\tau_e$ in $\tau$.
     
     Construct an $H$-equivariant map $q: \hat \cY \rightarrow \cT$ as follows.
     For each track $\tau_e$ there exists a product neighborhood of $\tau_e$ in $\hat \cY$ homeomorphic to $\tau_e \times [0,1]$.
     These product neighborhoods can be chosen $H$-equivariantly and disjoint from the set of $0$-simplices and from each other to obtain an $H$-equivariant set $\tau \times [0,1] = \bigsqcup_{e \in E\cT} \tau_e \times [0,1]$.
     Each component of $\hat \cY - \left(\tau \times [0,1]\right)$ is a subspace $\hat \cY_v' \subseteq \hat \cY_v$.
     Note that $\hat \cY_v'$ has the same number of ends as $\hat \cY_v$, as each $0$-simplex in $\hat \cY_v$ is a $0$-simplex in $\hat \cY_v'$ as well.
     Define the $H$-equivariant map $q$ so that $q(\hat \cY_v') = v$ and $\tau_e \times [0,1]$ is mapped to $e$ by projection onto the second factor.
     Then, the map $q$ decomposes the space $\hat \cY$ as a tree of spaces.  
      
       \begin{figure}
	\begin{overpic}[width=.7\textwidth,tics=5, ]{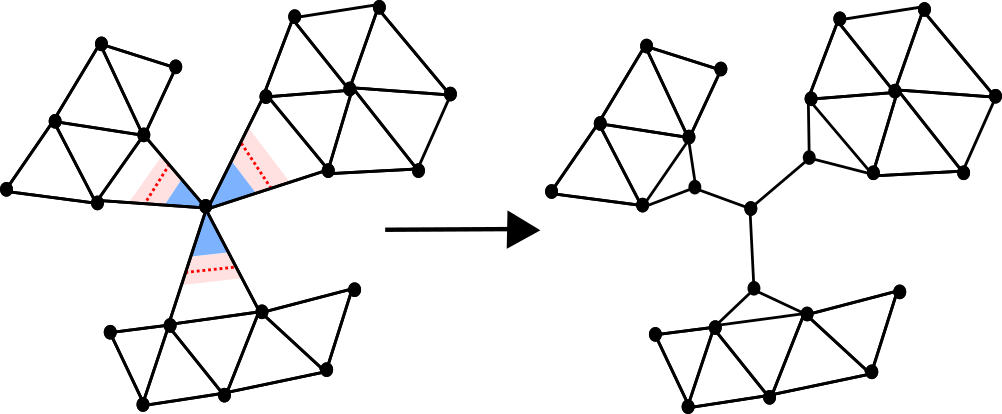} 
 	   \put(7.3,17.6){$\tau \times [0,1]$}
 	   \put(18.6, 24.2){$\hat \cY_v'$}
 	   \put(73.5,23.1){$Y_v$}
 	   \put(42,14){$\hat{q}$}
          \end{overpic}
	\caption{An illustration of the map $q$. The dashed lines on the left denote tracks and their shaded neighborhoods are collapsed to edges on the right. The blue region around the central vertex is the space $\hat {\cY}_v'$ and is collapsed to the vertex $Y_v$ on the right.}
	\label{fig:collapse}
    \end{figure}
     
     Let $Y$ be obtained from $\cY$ by collapsing the vertex spaces $\hat \cY_v'$ with zero ends to vertices and collapsing each subspace $\tau_e \times [0,1]$ to an edge.
     Then, the map $q$ can be factored $\hat \cY \rightarrow Y \rightarrow \cT$, where the first map $\hat{q}: \hat \cY \rightarrow Y $ is given by collapsing as above.
     See Figure~\ref{fig:collapse}.
     The $1$-ended vertex spaces $\hat \cY_v'$ have their (finite) intersections $\hat \cY_v' \cap \tau_e \times [0,1]$ crushed to points to obtain new $1$-ended vertex spaces $ Y_v$.
     Indeed, the $0$-simplices in $\hat \cY_v'$ embed in $Y_v$.
     
     The quotient map $\hat{q}: \hat \cY \rightarrow Y$ is $H$-equivariant and obtained by collapsing sets of universally bounded diameter, so $\hat{q}$ is a quasi-conjugacy.
     By composing the $H$-quasi-conjugacies $\hat q \circ \hat \phi : X \rightarrow \hat \cY \rightarrow Y$ we obtain the quasi-conjugation from the statement of the theorem.
    \end{proof}

    \section{A common hyperbolic model geometry} \label{sec:hyp_model_geo}
    
    To obtain an ideal common model geometry that is built out of copies of rank-1 symmetric spaces, we will apply the next theorem. 
   
    \begin{thm} \label{thm:actionPromotion}
     Let $H$ act cocompactly on a proper geodesic metric space $X$ which is quasi-isometric to a rank-$1$ symmetric space $\mathbb{H}^n_\mathbb{F}$.
     Then, $H$ acts cocompactly on $\mathbb{H}^n_{\mathbb{F}}$ and there is a quasi-isometry $f: X \rightarrow \mathbb{H}^n_{\mathbb{F}}$ and a constant $D \geq 0$ such that $d(h \cdot f(x), f(h\cdot x)) < D$ for all $x \in X$ and $h \in H$.  
    \end{thm}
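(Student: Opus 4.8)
The plan is to transport the cocompact $H$-action on $X$ across a fixed quasi-isometry $X \to \mathbb{H}^n_{\mathbb{F}}$, obtaining a \emph{cobounded quasi-action} of $H$ on $\mathbb{H}^n_{\mathbb{F}}$, then to straighten that quasi-action into a genuine isometric action using quasi-isometric rigidity of rank-$1$ symmetric spaces, and finally to reassemble the resulting quasi-conjugacies.

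First I would fix a quasi-isometry $\psi \colon X \to \mathbb{H}^n_{\mathbb{F}}$ together with a quasi-inverse $\bar\psi$, so that $\bar\psi \circ \psi$ is uniformly close to $\mathrm{id}_X$. For $h \in H$ set $\rho(h) := \psi \circ h \circ \bar\psi$. Because each $h$ acts on $X$ isometrically, every $\rho(h)$ is a $(K,C)$-quasi-isometry of $\mathbb{H}^n_{\mathbb{F}}$ with $K,C$ independent of $h$, while $\rho(e)$ is uniformly close to the identity and $\rho(h_1)\rho(h_2)$ is uniformly close to $\rho(h_1 h_2)$ (using $\bar\psi\psi \approx \mathrm{id}_X$); thus $\rho$ is a quasi-action. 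It is cobounded, since cocompactness of $H$ on $X$ gives $R_0$ with $X = N_{R_0}(H\cdot x_0)$ for a basepoint $x_0$, and pushing forward by $\psi$ yields $\mathbb{H}^n_{\mathbb{F}} = N_{R_1}(\rho(H)\cdot \psi(x_0))$ for some $R_1$.

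Next I would invoke quasi-isometric rigidity of rank-$1$ symmetric spaces. When $\mathbb{F} \in \{\mathbb{H}, \mathbf{Ca}\}$, Pansu's theorem~\cite{Pansu89} states that every self-quasi-isometry of $\mathbb{H}^n_{\mathbb{F}}$ lies at bounded distance from a unique isometry; since an isometry at bounded distance from the identity induces the identity on $\partial \mathbb{H}^n_{\mathbb{F}}$ and therefore fixes every geodesic line, hence is trivial, the assignment sending $h$ to the isometry near $\rho(h)$ is a genuine homomorphism $\sigma \colon H \to \Isom(\mathbb{H}^n_{\mathbb{F}})$ with $\rho(h)$ uniformly close to $\sigma(h)$, and one may take $q := \mathrm{id}$ below. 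When $\mathbb{F} \in \{\mathbb{R}, \mathbb{C}\}$, the quasi-action $\rho$ induces an action of $H$ on $\partial \mathbb{H}^n_{\mathbb{F}}$ by uniformly quasiconformal homeomorphisms; by Tukia~\cite{tukia} for $\mathbb{H}^n_{\mathbb{R}}$ with $n \geq 3$, by Hinkkanen~\cite{hinkkanen85, hinkkanen90} and Markovic~\cite{markovic06} for $\mathbb{H}^2_{\mathbb{R}}$, and by Chow~\cite{Chow96} for $\mathbb{H}^n_{\mathbb{C}}$, this action is quasiconformally conjugate to an action by conformal maps, i.e.\ by isometries of $\mathbb{H}^n_{\mathbb{F}}$; extending the conjugating quasiconformal homeomorphism of $\partial \mathbb{H}^n_{\mathbb{F}}$ to a quasi-isometry of $\mathbb{H}^n_{\mathbb{F}}$ then produces a homomorphism $\sigma \colon H \to \Isom(\mathbb{H}^n_{\mathbb{F}})$, a quasi-isometry $q$ of $\mathbb{H}^n_{\mathbb{F}}$, and a constant $D_0$ with $d(q\,\rho(h)\,z,\ \sigma(h)\,q\,z) < D_0$ for all $z$ and $h$.

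Finally I would set $f := q \circ \psi \colon X \to \mathbb{H}^n_{\mathbb{F}}$, a composition of quasi-isometries. Since $\rho(h)\psi(x) = \psi\bigl(h\,\bar\psi\psi(x)\bigr)$ and $\bar\psi\psi(x)$ is uniformly close to $x$, the point $\rho(h)\psi(x)$ is uniformly close to $\psi(h\cdot x)$; applying the coarsely Lipschitz map $q$ and then the quasi-conjugacy estimate for $q$ gives
\[
 d\bigl(\sigma(h)\cdot f(x),\, f(h\cdot x)\bigr) = d\bigl(\sigma(h)\,q\psi(x),\, q\psi(h\cdot x)\bigr) < D
\]
for a uniform $D$, which is the asserted bound. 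For cocompactness of $\sigma$: the orbit $\sigma(H)\cdot f(x_0)$ lies within $D$ of $f(H\cdot x_0)$, which is coarsely dense in $\mathbb{H}^n_{\mathbb{F}}$ because $f$ is a quasi-isometry and $H\cdot x_0$ is coarsely dense in $X$; hence $\mathbb{H}^n_{\mathbb{F}} = N_R\bigl(\sigma(H)\cdot f(x_0)\bigr)$ for some $R$, and since $\mathbb{H}^n_{\mathbb{F}}$ is proper the compact ball $\bar B_R(f(x_0))$ satisfies $\sigma(H)\cdot \bar B_R(f(x_0)) = \mathbb{H}^n_{\mathbb{F}}$, so the action is cocompact. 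The only substantial ingredient is the external quasi-isometric rigidity of rank-$1$ symmetric spaces, which enters through the four families $\mathbb{R},\mathbb{C},\mathbb{H},\mathbf{Ca}$ via different theorems; on the present side, the one delicate point is that coboundedness of the quasi-action --- rather than any properness hypothesis, which is not available here --- already forces the straightened isometric action to be cocompact.
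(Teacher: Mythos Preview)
Your proposal is correct and follows essentially the same route as the paper: transport the action across a quasi-isometry to obtain a cobounded quasi-action on $\mathbb{H}^n_{\mathbb{F}}$, then straighten it to an isometric action via rank-$1$ rigidity, and read off $f$ and the quasi-conjugacy bound. The only difference is packaging: the paper cites the rigidity step as a single black box (a special case of Kleiner--Leeb~\cite{KleinerLeeb09}, itself built on~\cite{Pansu89, tukia, Chow96, hinkkanen85, hinkkanen90, markovic06, gabai92, CassonJungreis94}), whereas you unpack the four families of symmetric spaces and name the relevant theorem for each; your version is strictly more detailed.
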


   The natural language for proving Theorem~\ref{thm:actionPromotion} is that of \emph{quasi-actions}.
   For background on quasi-isometries and quasi-actions we refer the reader to  Drutu--Kapovich~\cite[Section 8.5]{DrutuKapovich}.

  \begin{defi} \label{defn_quasiaction}
    Let $G$ be a group and $X$ a metric space. An {\it $(L,A)$-quasi-action} of $G$ on $X$ is a map $\phi: G \rightarrow Map(X,X)$ such that 
    \begin{enumerate}
     \item $\phi(g)$ is an $(L,A)$-quasi-isometry of $X$ for all $g \in G$;
     \item $d(\phi(id), Id_X) \leq A$;
     \item $d(\phi(g_1g_2), \phi(g_1)\phi(g_2)) \leq A$ for all $g_1, g_2 \in G$. 
    \end{enumerate}
    A quasi-action is {\it cobounded} if there exists $x \in X$ and a constant $R$ such that for all $x' \in X$ there exists $g \in G$ so that $d(x', \phi(g)(x)) \leq R$.
  \end{defi}

  Given an action of a group $G$ on a geodesic metric space $X$ and a quasi-isometry $q: X \rightarrow Y$ to another geodesic metric space, one obtains a \emph{conjugate quasi-action} $\phi: G \rightarrow Map(Y,Y)$ given by $\phi(g) = q \circ g \circ \bar{q}$ where $\bar{q}$ denotes a quasi-inverse of $q$. 
  The conjugate quasi-action yields the following lemma, a version of the \emph{quasi-action principal}.
  
  \begin{lemma} \label{lem:quasiActionPrincipal}
     If $G$ is a group that acts by isometries on a metric space $X$, and $X$ is quasi-isometric to a metric space $Y$, then there is a quasi-action of $G$ on $Y$. Moreover, if the action of $G$ on $X$ is cocompact, then the quasi-action of $G$ on $Y$ is cobounded. 
  \end{lemma}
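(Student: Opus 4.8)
The plan is to use the \emph{conjugate quasi-action} described immediately before the statement. Fix a quasi-isometry $q \from X \to Y$, say an $(L_0, A_0)$-quasi-isometry, together with an $(L_0, A_0)$-quasi-inverse $\bar q \from Y \to X$, chosen (after enlarging $A_0$ if necessary) so that $d_Y(q \bar q(y), y) \le A_0$ and $d_X(\bar q q(x), x) \le A_0$ for all $x \in X$, $y \in Y$. Define $\phi \from G \to Map(Y,Y)$ by $\phi(g) = q \circ g \circ \bar q$. I would then verify the three conditions of Definition~\ref{defn_quasiaction} with uniform constants $(L,A)$ depending only on $L_0$ and $A_0$, and finally check coboundedness.

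For condition (1): each $g \in G$ acts by an isometry of $X$, so $\phi(g) = q \circ g \circ \bar q$ is a composition of an $(L_0,A_0)$-quasi-isometry, an isometry, and an $(L_0,A_0)$-quasi-isometry; composing quasi-isometries multiplies the multiplicative constants and adds (with distortion) the additive ones, so $\phi(g)$ is an $(L,A)$-quasi-isometry with $L = L_0^2$ and $A = A(L_0,A_0)$ independent of $g$. For (2): $\phi(\mathrm{id}) = q \circ \mathrm{id}_X \circ \bar q = q\bar q$, and $d_Y(\phi(\mathrm{id})(y), y) \le A_0 \le A$ by the choice of $\bar q$. For (3): given $g_1, g_2 \in G$ and $y \in Y$, compare $\phi(g_1 g_2)(y) = q g_1 g_2 \bar q(y)$ with $\phi(g_1)\phi(g_2)(y) = q g_1 (\bar q q) g_2 \bar q(y)$; since $d_X(\bar q q (x), x) \le A_0$ with $x = g_2\bar q(y)$, and $q g_1$ is an $(L_0, A)$-quasi-isometry ($g_1$ being an isometry), the two images differ by at most $L_0 A_0 + A$. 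Replacing $A$ by the maximum of these constants gives a genuine quasi-action.

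For the ``moreover'' clause, assume the action of $G$ on $X$ is cocompact. Then there is a bounded set whose $G$-translates cover $X$, so for any basepoint $x_0 \in X$ the orbit $G \cdot x_0$ is $r$-dense in $X$ for some $r > 0$. Put $y_0 = q(x_0)$. Given $y \in Y$, quasi-surjectivity of $q$ yields $x \in X$ with $d_Y(q(x), y) \le A_0$; choose $g \in G$ with $d_X(g x_0, x) \le r$, so $d_Y(q(g x_0), q(x)) \le L_0 r + A_0$. Finally $q(g x_0)$ is close to $\phi(g)(y_0) = q g \bar q q(x_0)$: since $d_X(\bar q q(x_0), x_0) \le A_0$ and $q g$ distorts distances by at most $(L_0, A)$, we get $d_Y(q(g x_0), \phi(g)(y_0)) \le L_0 A_0 + A$. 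Combining the three estimates gives $d_Y(y, \phi(g)(y_0)) \le 2A_0 + L_0 r + L_0 A_0 + A =: R$, so the quasi-action is cobounded with basepoint $y_0$ and constant $R$.

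I do not expect a serious obstacle: this is the formal ``quasi-action principle,'' and its entire content is that all constants can be taken uniformly in $g$, which works precisely because each $g$ acts as an \emph{isometry} (not merely a quasi-isometry) of $X$, so only the fixed constants of $q$ and $\bar q$ enter. The one point that deserves explicit care is the bookkeeping in condition (3) and in the coboundedness estimate, where an $A_0$-error must be pushed through the quasi-isometry $q g$ and hence picks up a multiplicative factor of $L_0$; this is harmless but should be written out rather than asserted.
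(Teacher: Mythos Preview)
Your proposal is correct and is precisely the approach the paper indicates: the paper does not write out a proof of this lemma, but simply declares that the conjugate quasi-action $\phi(g) = q \circ g \circ \bar q$ ``yields the following lemma,'' and your argument supplies the routine verification of Definition~\ref{defn_quasiaction} and of coboundedness that the paper leaves to the reader.
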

    
    Lemma~\ref{lem:quasiActionPrincipal} applied to a group $H$ acting cocompactly by isometries on a geodesic metric space quasi-isometric to $\mathbb{H}^n_{\mathbb{F}}$ yields a cobounded quasi-action of $H$ on $\mathbb{H}^n_{\mathbb{F}}$.
    
   Theorem~\ref{thm:actionPromotion} is thus the consequence of the following statement, a special case of Theorem 1.4 in~\cite{KleinerLeeb09} (see also~\cite{KleinerLeeb01}), that is the consequence of a wide body of work~\cite{Pansu89, tukia, Chow96, hinkkanen85, hinkkanen90, markovic06, gabai92, CassonJungreis94}.
    We refer the reader to~\cite{DrutuKapovich} for recent exposition on the real hyperbolic case.     
    
    \begin{thm} 
     A cobounded quasi-action $\phi$ of a group $G$ on the rank-1 symmetric space $\mathbb{H}^n_{\mathbb{F}}$ is quasi-isometrically conjugate to an isometric action.
    \end{thm}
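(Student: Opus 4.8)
The plan is to transfer the quasi-action to the boundary $\partial \mathbb{H}^n_{\mathbb{F}}$, where coboundedness turns it into a uniform convergence action by uniformly quasi-M\"obius homeomorphisms, straighten that boundary action with the appropriate rigidity theorem, and then push the resulting conjugacy back into the interior. For the first step, write $\phi(g)$ for the image quasi-isometries; since $\phi$ is a quasi-action there are fixed constants $L,A$ with every $\phi(g)$ an $(L,A)$-quasi-isometry of $\mathbb{H}^n_{\mathbb{F}}$, so each $\phi(g)$ extends to a homeomorphism $\partial\phi(g)$ of $\partial \mathbb{H}^n_{\mathbb{F}}$. Conditions (2) and (3) of Definition~\ref{defn_quasiaction} say that $\phi(\mathrm{id})$ and $\phi(g_1 g_2)$ differ by bounded distance from $\mathrm{Id}$ and $\phi(g_1)\phi(g_2)$ respectively, and bounded perturbations of a quasi-isometry induce the same boundary map, so $\bar\phi\colon g \mapsto \partial\phi(g)$ is a genuine homomorphism $G \to \operatorname{Homeo}(\partial \mathbb{H}^n_{\mathbb{F}})$. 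As the $\phi(g)$ share the constants $L,A$, the maps $\partial\phi(g)$ are uniformly quasi-M\"obius with respect to the visual metric on $\partial \mathbb{H}^n_{\mathbb{F}}$: a round metric on a sphere when $\mathbb{F}=\mathbb{R}$, and a Carnot--Carath\'eodory metric on a sphere when $\mathbb{F}\in\{\mathbb{C},\mathbb{H},\mathbf{Ca}\}$ (see, e.g.,~\cite{DrutuKapovich, bridsonhaefliger}). Finally, coboundedness of $\phi$ translates, via the coarse correspondence between a point of $\mathbb{H}^n_{\mathbb{F}}$ and a geodesic between a pair of boundary points passing near it, into the statement that $\bar\phi$ is a \emph{uniform convergence action} on $\partial \mathbb{H}^n_{\mathbb{F}}$.

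Next I would straighten this boundary action, using that $\Isom(\mathbb{H}^n_{\mathbb{F}})$ acts on $\partial \mathbb{H}^n_{\mathbb{F}}$ as the full group of conformal --- in the non-real cases, Carnot-conformal --- homeomorphisms. The required input depends on $\mathbb{F}$ and $n$. If $\mathbb{F}=\mathbb{R}$ and $n\geq 3$, a uniformly quasiconformal uniform convergence group on $S^{n-1}$ is quasiconformally conjugate to a cocompact group of M\"obius transformations, by Tukia~\cite{tukia} (building on Sullivan). If $\mathbb{F}=\mathbb{R}$ and $n=2$, the boundary is $S^1$ and a uniform convergence action there is conjugate to a cocompact Fuchsian action by the Convergence Group Theorem of Gabai~\cite{gabai92} and Casson--Jungreis~\cite{CassonJungreis94}, and since the maps are also uniformly quasisymmetric one gets a quasisymmetric conjugacy by Hinkkanen~\cite{hinkkanen85, hinkkanen90} and Markovic~\cite{markovic06}. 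If $\mathbb{F}=\mathbb{C}$, Chow~\cite{Chow96} provides a quasiconformal conjugacy into the Carnot-conformal group of $S^{2n-1}$. If $\mathbb{F}\in\{\mathbb{H},\mathbf{Ca}\}$, Pansu's differentiability theorem~\cite{Pansu89} shows every quasiconformal self-homeomorphism of $\partial \mathbb{H}^n_{\mathbb{F}}$ is already Carnot-conformal, so $\bar\phi(G)\leqslant \Isom(\mathbb{H}^n_{\mathbb{F}})$ and no conjugacy is needed. In every case we obtain a quasi-M\"obius homeomorphism $\psi$ of $\partial \mathbb{H}^n_{\mathbb{F}}$ --- the identity in the quaternionic and Cayley cases --- together with a representation $\rho\colon G \to \Isom(\mathbb{H}^n_{\mathbb{F}})$ satisfying $\psi \circ \partial\phi(g)\circ \psi^{-1} = \rho(g)|_{\partial \mathbb{H}^n_{\mathbb{F}}}$ for all $g\in G$.

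It remains to transport the conjugacy into the interior. A quasi-M\"obius homeomorphism of $\partial \mathbb{H}^n_{\mathbb{F}}$ extends to a quasi-isometry of $\mathbb{H}^n_{\mathbb{F}}$; let $\Psi\colon \mathbb{H}^n_{\mathbb{F}}\to\mathbb{H}^n_{\mathbb{F}}$ be such an extension of $\psi$ and $\bar\Psi$ a quasi-inverse. For each $g$, the quasi-isometries $\Psi\circ\phi(g)\circ\bar\Psi$ and $\rho(g)$ of $\mathbb{H}^n_{\mathbb{F}}$ induce the same boundary homeomorphism, namely $\rho(g)|_{\partial \mathbb{H}^n_{\mathbb{F}}}$; and two quasi-isometries of $\mathbb{H}^n_{\mathbb{F}}$ inducing the same boundary homeomorphism lie within distance at most a constant depending only on their quasi-isometry constants and the hyperbolicity constant. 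As $\Psi$ and all the $\phi(g)$ have fixed quasi-isometry constants, so do the composites $\Psi\circ\phi(g)\circ\bar\Psi$, whence there is a single constant $D\geq 0$ with
\[
 d\bigl(\Psi\circ\phi(g)\circ\bar\Psi(x),\, \rho(g)(x)\bigr) < D \qquad \text{for all } x\in\mathbb{H}^n_{\mathbb{F}} \text{ and } g\in G.
\]
Thus $\Psi$ quasi-conjugates $\phi$ to the isometric action $\rho$, and since $\phi$ is cobounded and $\Psi$ is a quasi-isometry the action $\rho$ is cobounded as well. (All cases are also subsumed by the quasi-isometric rigidity results of Kleiner--Leeb~\cite{KleinerLeeb09, KleinerLeeb01}.)

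The main obstacle is the straightening step: each of its four inputs is a substantial theorem, and combining them cleanly calls for care in the borderline low-dimensional real case --- where one must argue with quasisymmetry rather than mere quasiconformality and cannot assume $G$ is discrete --- and, in the complex, quaternionic and Cayley cases, in matching the quasi-M\"obius condition inherited from the interior with the correct notion of quasiconformality for the Carnot--Carath\'eodory boundary and with the conformal group of that boundary. By contrast, the transfer arguments in the first and last steps --- promoting uniform constants on the interior to uniform constants on the boundary and back, and recognizing coboundedness as a uniform convergence condition --- are routine coarse hyperbolic geometry.
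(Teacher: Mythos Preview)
The paper does not actually prove this theorem: it is stated as a citation, described as ``a special case of Theorem 1.4 in~\cite{KleinerLeeb09}'' and ``the consequence of a wide body of work~\cite{Pansu89, tukia, Chow96, hinkkanen85, hinkkanen90, markovic06, gabai92, CassonJungreis94},'' with the reader referred to~\cite{DrutuKapovich} for exposition in the real case. Your proposal goes further than the paper by sketching the standard proof strategy behind those citations --- pass to a genuine action by uniformly quasi-M\"obius maps on the boundary, straighten it case-by-case using the appropriate rigidity theorem, and pull the conjugacy back to the interior --- and the sketch is correct, including the case split and the identification of the delicate points. In short: the paper treats the statement as a black box from the literature, while you have (accurately) outlined what is inside the box.
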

      
    We can  now apply Theorem~\ref{thm:actionPromotion} to build a new common ideal model geometry as in Definition~\ref{defn:ideal}.
    
     \begin{prop} \label{prop:newModelSpace2}
      Let $G$ be an infinite-ended group with a Stallings-Dunwoody decomposition in which every one-ended vertex group $G_v \leq G$ is quasi-isometric to a rank-$1$ symmetric space $\mathbb{H}^{n(v)}_{\mathbb{F}}$ for some $n(v) \geq 2$, and such that there is at least one such vertex group. 
      If $X$ is a model geometry for $G$, and $H = \Isom(X)$, then the $H$-action on $X$ is quasi-conjugate to an $H$-action on an ideal model geometry. 
     \end{prop}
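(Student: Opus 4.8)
The plan is to carry out the surgery prepared by Theorems~\ref{intro_nicespace} and~\ref{thm:actionPromotion}: first replace $X$ by an $H$-equivariant tree of spaces with point edge spaces, and then replace each one-ended vertex space in turn by the appropriate rank-$1$ symmetric space, taking care that all choices are made $H$-equivariantly.

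First I would apply Theorem~\ref{intro_nicespace}. Its hypotheses hold: $G$ is hyperbolic, being a finite graph of groups with finite edge groups and hyperbolic one-ended vertex groups (rank-$1$ symmetric spaces are $\delta$-hyperbolic); it is infinite-ended by assumption; and it is not virtually free, since it has a one-ended vertex group. So there is a locally finite, simply connected complex $\cY$ with an $H$-action, an $H$-equivariant tree-of-spaces decomposition $p\colon \cY \to \cT$ with every edge space a point and every vertex space $\cY_v := p^{-1}(v)$ one-ended or a point, and an $H$-quasi-conjugacy $\hat\phi\colon X \to \cY$; moreover each one-ended $\cY_v$ is quasi-isometric to a one-ended vertex group of $G$, hence to $\mathbb{H}^{n(v)}_{\mathbb{F}}$ for some $n(v)\geq 2$ and some $\mathbb{F}$ that depend only on the $H$-orbit of $v$. (Note $\cT$ need not be locally finite.) For a one-ended vertex $v$ set $H_v = \Stab_H(v)$; since $\cT/G$ is finite, $G_v := \Stab_G(v)$ acts geometrically on $\cY_v$, and as $G_v \leqslant H_v$ the group $H_v$ acts cocompactly on $\cY_v$. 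By Theorem~\ref{thm:actionPromotion}, $H_v$ acts cocompactly on $\mathbb{H}^{n(v)}_{\mathbb{F}}$ and there is an $H_v$-quasi-conjugacy $f_v\colon \cY_v \to \mathbb{H}^{n(v)}_{\mathbb{F}}$; choosing such data on one representative of each of the finitely many $H$-orbits of one-ended vertices and transporting by $H$, I may assume the family $\{f_v\}$ is $H$-equivariant with a uniform quasi-conjugacy constant $D$.

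Next I would fix the new gluing data. For an edge $e$ of $\cT$ incident to a one-ended vertex $v$, the $0$-simplex $y_{v,e}\in \cY_v$ at which $e$ is attached is fixed by $\Stab_H(e)$, so the orbit $\Stab_H(e)\cdot f_v(y_{v,e})$ has diameter at most $2D$; its circumcenter in the complete $\CAT(-1)$ space $\mathbb{H}^{n(v)}_{\mathbb{F}}$ is a point $x_{v,e}$ fixed by $\Stab_H(e)$ with $d(x_{v,e}, f_v(y_{v,e}))\leq D$. Making these choices on orbit representatives and extending equivariantly gives an $H$-equivariant assignment of attaching points. Now let $X'$ be the tree of spaces over $\cT$ obtained from $\cY$ by leaving the point vertex spaces and the edges unchanged, replacing each one-ended $\cY_v$ by $\mathbb{H}^{n(v)}_{\mathbb{F}}$, and attaching the endpoint of each edge $e$ at $v$ to $x_{v,e}$; the $H$-actions on the pieces assemble, via the standard induced-action construction over the $H$-sets of vertices and edges of $\cT$, to an $H$-action on $X'$ by isometries. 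I would then verify the required properties. The space $X'$ is geodesic, and it is proper even though $\cT$ is not locally finite: cocompactness of the $H_v$-action shows the attaching points $\{x_{v,e} : e \ni v\}$ form finitely many $H_v$-orbits, hence a discrete, locally finite subset of $\mathbb{H}^{n(v)}_{\mathbb{F}}$, so each closed ball in $X'$ meets only finitely many vertex spaces. It is simply connected by van Kampen, since $\cT$ is a tree and all vertex and edge spaces are contractible. The restricted $G$-action is cocompact (coboundedness passes through the surgery, and cobounded isometric actions on proper spaces are cocompact) and proper (on $\mathbb{H}^{n(v)}_{\mathbb{F}}$ the group $G_v$ acts geometrically, since its geometric action on $\cY_v$ yields a proper cobounded quasi-action that the promotion theorem conjugates to an isometric one; the edge stabilizers are finite). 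Hence $X' \cong \widetilde{X'/G}$ with $X'/G$ a finite graph of spaces with point edge spaces and vertex spaces points or closed rank-$1$ symmetric orbifolds, so $X'$ is an ideal model geometry in the sense of Definition~\ref{defn:ideal}.

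Finally, since $\cY$ and $X'$ are trees of spaces over the same tree $\cT$ with the same point vertex and edge spaces, with $\cY_v$ uniformly quasi-isometric to $\mathbb{H}^{n(v)}_{\mathbb{F}}$ via $f_v$ and with uniformly bounded movement of attaching points, the map equal to $f_v$ on each one-ended vertex space and to the identity on the point vertex spaces and edges is a quasi-isometry $\psi\colon \cY \to X'$, and equivariance of the choices makes it an $H$-quasi-conjugacy; composing, $\psi\circ\hat\phi\colon X \to X'$ is the desired $H$-quasi-conjugacy to an ideal model geometry. I expect the main obstacle to be the equivariant surgery of the third paragraph: arranging that the (a priori non-proper) $H$-action descends to $X'$, choosing the new attaching points consistently with it, and — most subtly — checking that $X'$ is a genuine proper geodesic model geometry despite the underlying Bass--Serre tree $\cT$ being non-locally-finite, which relies on the discreteness of the attaching-point sets furnished by cocompactness of the promoted vertex actions.
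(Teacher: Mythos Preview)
Your proposal is correct and follows essentially the same route as the paper: apply Theorem~\ref{intro_nicespace} to pass to a tree of spaces, apply Theorem~\ref{thm:actionPromotion} on each one-ended vertex space to get an $H_v$-action on $\mathbb{H}^{n(v)}_{\mathbb{F}}$, replace vertex spaces $H$-equivariantly via the induced action over $H/H_v$, and reattach edges at the $\CAT(-1)$ circumcenters of the bounded $\Stab_H(e)$-orbits of the images of the old attaching points. The paper carries out the induced-action construction more explicitly via coset representatives but omits the verifications you supply (properness of $X'$ via discreteness of attaching points, simple connectedness, and that the restricted $G$-action is geometric); these are worthwhile checks the paper leaves implicit.
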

     
     \begin{proof}    
      Apply Theorem~\ref{intro_nicespace} to quasi-conjugate the $H$-action on $X$ to a simplicial $H$-action on a simplicial $2$-complex $Y$ that decomposes as a tree of spaces with underlying graph $T$ and with vertex spaces isomorphic to either points or one-ended simplicial $2$-complexes, and each edge space a point.
      The group $H$ acts on the tree $T$.
      
      Let $Y_v$ be a one-ended vertex space of $Y$. 
      The space $Y_v$ is quasi-isometric to $\mathbb{H}^n_{\mathbb{F}}$ for some $n \geq 2$ since it is stabilized by a one-ended subgroup in the Stallings-Dunwoody decomposition of $G$.
      Let $H_v = \Stab_H(Y_v)$. 
      The group $H_v$ acts on $Y_v$ cocompactly since it acts on $Y_v$ simplicially and contains a subgroup $G_v$ that acts on $Y_v$ cocompactly.
      Apply Theorem~\ref{thm:actionPromotion} to obtain a cocompact $H_v$-action on $\mathbb{H}^n_{\mathbb{F}}$ and a quasi-isometry $f_v : Y_v \rightarrow \mathbb{H}^n_{\mathbb{F}}$ that quasi-conjugates the action of $H_v$ on $Y_v$ to the action of $H_v$ on $\mathbb{H}^n_{\mathbb{F}}$.
      
      To obtain the ideal model geometry we will equivariantly remove the $H$-orbit of $Y_v$ and replace it with a copy of $\mathbb{H}^n_{\mathbb{F}}$. 
      The $H$-orbit of $Y_v$ is the disjoint union of vertex spaces, each of the form $Y_{hv}$ for some $h \in H$. 
      Enumerate the vertices $H\cdot v = \{ v= v_0 , v_1, v_2, \ldots \}$. 
      For each $i \in \N$, choose $h_i \in H$ such that $h_0 = id$ and $h_i \cdot v = v_i$; then $\{h_i\}_{\N}$ is a set of coset representatives for $H/H_v$.
      Realize $H \cdot Y_v$ as the direct product $Y_v \times (H / H_v)$ with induced $H$-action given by
      $$ h \cdot (y, [h_i]) = ( h_j^{-1} h h_i \cdot y, [h_j] = [hh_i]).$$
      Note that $[h_j] = [hh_i]$ implies that $h_j^{-1} h h_i \in H_v$.
      This action on the product is the same as the natural action of $H$ on $H\cdot Y_v$, after identifying the element $h_i Y_v$ with the element $(Y_v, [h_i])$ via the isomorphism  that maps $h_i \cdot y \mapsto (y, [h_i])$, where $y \in Y_v$.
      
      Now, define an action of $H$ on $\mathbb{H}^n_{\mathbb{F}} \times (H / H_v)$ by
      $$ h \cdot (x, [h_i]) = ( h_j^{-1} h h_i \cdot x, [h_j] = [hh_i]).$$
      Take the closure of $Y - H \cdot Y_v$ and the disjoint union
      \[
       \overline{Y - H \cdot Y_v} \sqcup (\mathbb{H}^n_{\mathbb{F}} \times H / H_v)
      \]
      to recover a tree of spaces by equivariantly reattaching the ends of the edge spaces that intersected $H \cdot Y_v$ and to $(\mathbb{H}^n_{\mathbb{F}} \times H / H_v)$ as follows.      
      Let $e$ be an edge of the tree $T$ incident to the vertex $v$. Let $Y_e$ denote the corresponding edge space in $Y$ and $H_e = \Stab_H(Y_e)$. 
      Each edge of $H_v \cdot e$ corresponds to a coset in $H_v / H_e$.
      Let $y_e = Y_v \cap Y_e$.
      The $H_e$-orbit of $f_v(y_e) \in \Hy_{\mathbb{F}}^n$ is a bounded set, since $f_v$ quasi-conjugates the action of $H_v$ on $Y_v$ to the action of $H_v$ on $\Hy_{\mathbb{F}}^n$.
      Thus, the convex hull of $H_e\cdot f_v(y_e)$ has a center $\hat{y}_e$ invariant under $H_e$; see \cite[Proposition II.2.7]{bridsonhaefliger}.
      The $H$-orbit of $(\hat{y}_e, [id])$ in $\mathbb{H}^n_{\mathbb{F}} \times (H / H_v)$ defines the points to which the the edge spaces $H \cdot Y_e$ are reattached.
      More precisely, attach the endpoint $h\cdot y_e$ of the edge $h\cdot e \in \overline{Y - H \cdot Y_v} $ to the point $h \cdot (\hat{y}_e, [id]) \in \mathbb{H}^n_{\mathbb{F}} \times H/H_v$.
      Doing this for all $H_v$-orbits of edges incident to $v$, we obtain a new tree of spaces as a quotient space 
      \[
       \left(\overline{(Y - H \cdot Y_v)} \cup (\mathbb{H}^n_{\mathbb{F}} \times H / H_v)\right) / \sim
      \]
      which has a natural $H$-action and such that $G$ and $G'$ act geometrically.
      Repeating for all $H$-orbits of one-ended vertex spaces yields the desired ideal model geometry.
      \end{proof}
    
    \subsection{Commensurability of certain manifold groups}  
      
    \begin{lemma} \label{lemma:discrete_geometric}
    If $\cP \subset \fhyp$ is a discrete subset stabilized by a uniform lattice $\Gamma$, then $\Gamma$ is a finite-index subgroup of $\Stab(\cP) \leqslant \Isom(\fhyp)$.
    \end{lemma}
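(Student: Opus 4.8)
The plan is to prove that the setwise stabilizer $S := \Stab(\mathcal{P}) \leqslant \Isom(\fhyp)$ acts properly discontinuously on $\fhyp$. Since $\Gamma$ stabilizes $\mathcal{P}$ we have $\Gamma \leqslant S$, and $\Gamma$ acts cocompactly on $\fhyp$, so Lemma~\ref{lemma:subgroup} (with $G = S$, $H = \Gamma$, $X = \fhyp$) then gives at once that $\Gamma$ has finite index in $S$. We may assume $\mathcal{P} \neq \emptyset$, which holds in all of our applications and is necessary for the conclusion. Recall that a \emph{discrete subset} $\mathcal{P}$ of $\fhyp$ is closed and locally finite, so every bounded subset of $\fhyp$ meets $\mathcal{P}$ in a finite set. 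Because $\Isom(\fhyp)$ acts properly on the proper geodesic space $\fhyp$, a subgroup acts properly discontinuously exactly when it is discrete, so the whole problem reduces to showing that $S$ is a discrete subgroup.

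I would first record two facts. The set $\mathcal{P}$ is coarsely dense in $\fhyp$: fixing $p_0 \in \mathcal{P}$ and using cocompactness of the $\Gamma$-action, there is $R > 0$ with $\fhyp = N_R(\Gamma \cdot p_0) \subseteq N_R(\mathcal{P})$, since $\Gamma \cdot p_0 \subseteq \Gamma \cdot \mathcal{P} = \mathcal{P}$. And $S$ is closed in $\Isom(\fhyp)$: if $s_i \to s$ with $s_i \in S$, then for each $p \in \mathcal{P}$ the points $s_i p$ lie in $\mathcal{P}$ and converge to $sp$, so $sp \in \mathcal{P}$ because $\mathcal{P}$ is closed; applying the same to $s_i^{-1} \to s^{-1}$ yields $s\mathcal{P} = \mathcal{P}$, i.e.\ $s \in S$. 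Since $\Isom(\fhyp)$ is a Lie group, $S$ is then a Lie subgroup.

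To finish I would rule out a nontrivial identity component $S^{\circ}$. For each $p \in \mathcal{P}$ the orbit $S^{\circ} \cdot p$ is a connected subset of $\mathcal{P}$, and $\mathcal{P}$ is discrete, so $S^{\circ} \cdot p = \{p\}$; thus every element of $S^{\circ}$ fixes $\mathcal{P}$ pointwise. If $S^{\circ}$ were nontrivial, choose $g_0 \in S^{\circ} \setminus \{\operatorname{id}\}$; then $\Fix(g_0)$ is a totally geodesic submanifold of $\fhyp$ of positive codimension that contains the $R$-dense set $\mathcal{P}$ --- impossible, since a totally geodesic submanifold of positive codimension is never coarsely dense in $\fhyp$. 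Hence $S^{\circ} = \{\operatorname{id}\}$, so $S$ is discrete, and the proof concludes as above.

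The only real obstacle is the discreteness of $S$; the passage through Lemma~\ref{lemma:subgroup} and the closedness of $S$ are formal. The essential geometric inputs are that a nonempty $\Gamma$-invariant set is coarsely dense when $\Gamma$ is a uniform lattice, and that an isometry of $\fhyp$ with coarsely dense fixed set must be the identity (equivalently, the only coarsely dense totally geodesic submanifold of $\fhyp$ is $\fhyp$ itself). If one preferred to avoid the closed-subgroup theorem, discreteness of $S$ can be shown directly: any sequence $s_i \to \operatorname{id}$ in $S$ eventually fixes each point of $\mathcal{P}$ (because $\mathcal{P}$ is discrete), hence eventually lies in the pointwise stabilizer of any given finite subset $F \subseteq \mathcal{P}$, and one can choose $F$ so that this pointwise stabilizer is finite --- since the nested pointwise stabilizers of finite subsets of $\mathcal{P}$ are compact subgroups with trivial intersection --- whence $s_i = \operatorname{id}$ for all large $i$.
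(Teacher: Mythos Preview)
Your proof is correct. Both you and the paper reduce to Lemma~\ref{lemma:subgroup}, but you apply it with $X=\fhyp$ (after showing $S=\Stab(\cP)$ is discrete in $\Isom(\fhyp)$), whereas the paper applies it with $X=\cP$ itself (after showing $S$ acts properly on $\cP$). The paper's argument for properness is elementary: for $y\in\cP$ they choose $m+1$ nearby points of $\cP$ not lying in a codimension-one hyperplane, and inject the point stabilizer $\Stab_S(y)$ into a finite symmetric group permuting those points, the kernel being trivial since an isometry of $\fhyp$ fixing $m+1$ points in general position is the identity. Your route via the identity component $S^\circ$ is cleaner conceptually but imports more structure (the closed-subgroup theorem and the Lie group structure of $\Isom(\fhyp)$); the underlying geometric fact---that an isometry fixing a coarsely dense set, or equivalently a set not contained in a proper totally geodesic subspace, is the identity---is the same in both arguments. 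Your alternative sketch at the end, choosing a finite $F\subset\cP$ with finite pointwise stabilizer, is essentially the paper's permutation argument in disguise.
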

    \begin{proof}
    Equip the subspace $\cP$ with the metric induced by the inclusion $\cP \hookrightarrow \fhyp$. Since $\cP$ is a proper metric space, it suffices to prove that $\Gamma' = \Stab(\cP) \leq \Isom(\fhyp)$ acts properly on $\cP$ by Lemma~\ref{lemma:subgroup}. Let $y \in \mathcal{P}$ and pick $\epsilon>0$ so that $B_\epsilon(y)$ contains no other points of $\mathcal{P}$. 
    Then, if $\gamma \in \Gamma'$ so that $d(y, \gamma\cdot y) < \epsilon$, then $\gamma \cdot y = y$. Thus, it is enough to prove that $\Stab_{\Gamma'}(y)$ is finite. 
    Choose $R>0$ so that $B_R(y)$ contains at least $m+1$ points in $\mathcal{P}$ that are not contained in a {codimension-1} hyperplane, where $m$ is the (real) dimension of $\mathbb{H}^m_{\mathbb{F}}$.
     There is a homomorphism $\Stab_{\Gamma'}(y) \rightarrow \textrm{Sym}({M})$, the group of permutations of $M > m$ elements. 
    Any element of the kernel fixes $m+1$ distinct points in $\mathbb{H}_{\mathbb{F}}^m$ not contained in a codimension-$1$ hyperplane, so the kernel consists of only the trivial isometry.
    Indeed, the stabilizer of a single point will act by isometries on the unit tangent space which is an $(m-1)$-sphere.
    The remaining fixed points will correspond to fixed points in the unit tangent space.
    Since the remaining fixed points in $\mathbb{H}_{\mathbb{F}}^m$ don't stabilize a codimension-1 subspace, the corresponding fixed points in the unit tangent sphere will give a basis for the tangent space, and therefore the entire tangent sphere will be fixed and the isometry will be trivial. 
    Thus, $\Stab_{\Gamma'}(y)$ is finite, as desired. 
    \end{proof}
    
    The next proposition follows from the previous lemma and arguments tacit in the proof of Proposition~\ref{prop:newModelSpace2}.  

    \begin{prop} \label{prop:simp_ac}
    If $G$ and $G'$ are closed hyperbolic manifold groups that act geometrically on the same simplicial complex, then $G$ and $G'$ are virtually isomorphic.
    Moreover, if $G$ and $G'$ are residually finite then $G$ and $G'$ are abstractly commensurable. 
    \end{prop}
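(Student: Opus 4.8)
The plan is to realize both $G$ and $G'$ as uniform lattices in the isometry group of a common rank-$1$ symmetric space, and then to produce a single \emph{discrete} subset of that symmetric space stabilized by both, so that Lemma~\ref{lemma:discrete_geometric} forces them to be commensurable there.

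First I would write $K$ for the common simplicial complex and $H = \Isom(K)$; since $G \leqslant H$ already acts cocompactly on $K$, so does $H$. Because $G$ is a closed hyperbolic manifold group, $K$ is quasi-isometric to a rank-$1$ symmetric space $\mathbb{H}^n_{\mathbb{F}}$ (the universal cover of the relevant manifold), and the same holds for $G'$. Applying Theorem~\ref{thm:actionPromotion} to the $H$-action on $K$ produces a homomorphism $\rho \colon H \to \Isom(\mathbb{H}^n_{\mathbb{F}})$ with cocompact image, a quasi-isometry $f \colon K \to \mathbb{H}^n_{\mathbb{F}}$, and a constant $D$ with $d(\rho(h) f(x), f(h x)) < D$ for all $h \in H$ and $x \in K$. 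A routine argument with this bound and the properness of the geometric $G$-action on $K$ shows that $\ker(\rho|_G)$ is finite and that $\rho(G)$ acts properly discontinuously and cocompactly on $\mathbb{H}^n_{\mathbb{F}}$, i.e.\ is a uniform lattice; likewise for $G'$. Set $\Gamma = \rho(G)$ and $\Gamma' = \rho(G')$.

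The key step --- the point ``tacit in the proof of Proposition~\ref{prop:newModelSpace2}'' --- is to carry the vertex set $VK$ over to a discrete subset of $\mathbb{H}^n_{\mathbb{F}}$ in an $H$-equivariant way, using the circumcenter trick from that proof. For a representative $v$ of each $H$-orbit of vertices, the set $\rho(\Stab_H(v)) \cdot f(v)$ lies in $B_D(f(v))$, hence is bounded and $\rho(\Stab_H(v))$-invariant, so by \cite[Proposition II.2.7]{bridsonhaefliger} its circumcenter is fixed by $\rho(\Stab_H(v))$; I take this point to be $\Phi(v)$ and extend $\Phi$ $H$-equivariantly over the orbit. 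Since $G$ acts cocompactly on the locally finite complex $K$, there are only finitely many $G$-orbits of vertices, so $\mathcal{P} := \Phi(VK)$ is a finite union of $\Gamma$-orbits in $\mathbb{H}^n_{\mathbb{F}}$; as $\Gamma$ acts properly discontinuously, each such orbit is closed and discrete, hence so is $\mathcal{P}$. Because $\Phi$ is $H$-equivariant and $VK$ is $H$-invariant, $\mathcal{P}$ is invariant under $\rho(H)$, and in particular under both $\Gamma$ and $\Gamma'$ (it is also coarsely dense in $\mathbb{H}^n_{\mathbb{F}}$, hence infinite).

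Lemma~\ref{lemma:discrete_geometric} now applies to $\mathcal{P}$: both $\Gamma$ and $\Gamma'$ have finite index in $\Stab_{\Isom(\mathbb{H}^n_{\mathbb{F}})}(\mathcal{P})$, so $\Gamma_0 := \Gamma \cap \Gamma'$ has finite index in each. Then $(\rho|_G)^{-1}(\Gamma_0) \leqslant G$ and $(\rho|_{G'})^{-1}(\Gamma_0) \leqslant G'$ are finite-index subgroups carrying finite normal subgroups $\ker(\rho|_G)$, $\ker(\rho|_{G'})$ with both quotients isomorphic to $\Gamma_0$, which is exactly the definition of virtual isomorphism. If moreover $G$ and $G'$ are residually finite, I would pass to finite-index subgroups of these meeting the finite kernels trivially; these embed as finite-index subgroups of $\Gamma_0$, and since any two finite-index subgroups of $\Gamma_0$ have a common finite-index subgroup, $G$ and $G'$ then have isomorphic finite-index subgroups, i.e.\ are abstractly commensurable. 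The main obstacle I expect is precisely the discreteness of $\mathcal{P}$: the quasi-isometry $f$ does not preserve discreteness, so $VK$ cannot simply be pushed across, and the role of the circumcenter construction is to replace $f|_{VK}$ by a genuinely $\rho(H)$-equivariant map whose image is therefore a finite union of lattice orbits and hence discrete.
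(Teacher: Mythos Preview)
Your proposal is correct and follows essentially the same route as the paper's proof: quasi-conjugate the $\Isom(K)$-action to $\mathbb{H}^n_{\mathbb{F}}$ via Theorem~\ref{thm:actionPromotion}, use the circumcenter trick from Proposition~\ref{prop:newModelSpace2} to produce a discrete $\rho(H)$-invariant set $\mathcal{P}$, and then invoke Lemma~\ref{lemma:discrete_geometric} to conclude commensurability. The only notable difference is in how you establish discreteness of $\mathcal{P}$: the paper argues directly from the quasi-conjugacy inequality and local finiteness of $K$ (bounding $d_K(v,h\cdot v)$ in terms of $d_{\mathbb{H}^n_\mathbb{F}}(v',h\cdot v')$), whereas you observe that $\mathcal{P}$ is a finite union of $\Gamma$-orbits and use that $\Gamma$ is already known to act properly discontinuously---your version is arguably cleaner and sidesteps any worry about whether $H$ itself acts properly on $K$.
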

    \begin{proof}
        Suppose $G$ and $G'$ are closed hyperbolic manifold groups that act geometrically on the same simplicial complex $X$, which is quasi-isometric to $\Hy_{\mathbb{F}}^n$. Let $H = \Isom(X)$. The group $H$ acts on $X$ cocompactly since $X$ is a simplicial complex and the subgroups $G,G' \leq H$ act on $X$ cocompactly. There exists a quasi-isometry $f:X \rightarrow \fhyp$ that quasi-conjugates the $H$ action on $X$ to a cocompact $H$-action on $\fhyp$ by Theorem~\ref{thm:actionPromotion}.  
        
        We define a discrete subset $\cP \subset \fhyp$ stabilized by $H$ to which we may apply Lemma~\ref{lemma:discrete_geometric}. 
        Let $v \in X^{(0)}$ be a vertex, and let $H_v = \Stab_H(v)$. The $H_v$-orbit of $f(v)$ in $\fhyp$ is a bounded set since the map $f$ quasi-conjugates the $H$-action on $X$ to the $H$-action on $\fhyp$. As in Proposition~\ref{prop:newModelSpace2}, the convex hull of $H_v\cdot f(v)$ has a center $v'$ that is invariant under the action of $H_v$. We claim that $H \cdot v'$ is a discrete subset of $\fhyp$.  Since $\Stab_H(v) \subset \Stab_H(v')$, if $h \cdot v' \neq v'$, then $h \cdot v \neq v$. If $d_{\fhyp}(v', h\cdot v')< \epsilon$, then there exists $C = C(\epsilon)$ depending on the quasi-conjugacy constants so that $d_X(v, h\cdot v) <C$. The simplicial complex $X$ is locally finite, so the set $\{h \, | \, d(v,h\cdot v)<C\}$ is finite.  Thus, the set $\{h \, | \, d(v', h\cdot v')<\epsilon\}$ is finite.
        
        The $H$-action on $\Hy_{\mathbb{F}}^n$ is given by a homomorphism $\Phi: H \rightarrow \Isom(\mathbb{H}_{\mathbb{F}}^n)$. 
        Let $\bar G = \Phi(G)$, $\bar G' = \Phi(G')$, and $\bar H = \Phi(H)$. 
        As $G$ and $G'$ act geometrically on $\mathbb{H}_{\mathbb{F}}^n$ we deduce that $\bar G = G / F$ and $\bar G' = G' / F'$, where $F$ and $F'$ are the finite kernels of the respective actions on $\mathbb{H}_{\mathbb{F}}^n$.
        Since $\bar H$ stabilizes $\mathcal{P}$, Lemma~\ref{lemma:discrete_geometric} implies that $\bar H$ is a uniform lattice so $\bar G$ and $\bar G'$ are finite-index subgroups of $\bar H$, so are commensurable in $\bar H$.
        Let $\hat G = \Phi^{-1}(\bar G \cap \bar G') \cap G$ and $\hat{G}' = \Phi^{-1}(\bar G \cap \bar G') \cap G'$.
        Then $\hat G / F \cong \bar G \cap \bar G' \cong \hat G' / F'$, and we conclude that $G$ and $G'$ are virtually isomorphic.
        If $G$ and $G'$ are residually finite, then we can assume that $F$ and $F'$ are trivial by first passing to finite-index subgroups that do not contain the non-trivial elements of $F$ and $F'$ respectively.
        In which case the argument implies that $G$ and $G'$ are abstractly commensurable.
        \end{proof}

    \section{Pairwise Amalgamations}

    We prove Theorem~\ref{thm_intro_surface_case} and Theorem~\ref{thm_intro_amalgamation_case} in this section. We use the following notation throughout. 
    
    \begin{nota}
     Let $G \cong \Sigma_1 * \Sigma_2$ and $G' \cong \Si_1' *\Si_2'$, where $\Sigma_i \cong \pi_1(M_i)$ and $\Sigma_i' \cong \pi_1(M_i')$ are fundamental groups of closed orientable hyperbolic manifolds. Suppose $G$ and $G'$ have a common model geometry. 
     By Proposition~\ref{prop:newModelSpace2}, the groups $G$ and $G'$ act geometrically on an ideal model geometry $Y$.
     Let $\fhyp \equiv Y_v \subset Y$ be a one-ended vertex space. 
     Let $I_v$ be the set of edges incident to $v$ in $T$, and let $y_e = Y_e \cap Y_v$ be the point of intersection between the respective edge and vertex space.
     Then $\mathcal{P}_v = \{y_e \mid e \in I_v\}$ is a discrete subset of points in $Y_v$ that coarsely covers $Y_v$.
     Let $H = \Isom(Y)$. Then, $Stab_H(Y_v) \leq H$, the subgroup of isometries of $Y$ that stabilize $Y_v$, stabilizes the set $\mathcal{P}_v$. 
    \end{nota}    
    
    \begin{lemma} \label{lemma:geo_action}
     The group $\Stab_H(Y_v) / \Fix_H(Y_v)$ acts on $Y_v$ geometrically. 
    \end{lemma}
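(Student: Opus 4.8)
The plan is to identify $\bar H := \Stab_H(Y_v)/\Fix_H(Y_v)$ with a subgroup of $\Isom(Y_v)$ that is wedged between two uniform lattices in $\Isom(Y_v) = \Isom(\fhyp)$, and then to conclude that it too is a uniform lattice acting geometrically on $Y_v$. First I would note that restriction to $Y_v$ gives a homomorphism $\Stab_H(Y_v) \to \Isom(Y_v)$ with kernel exactly $\Fix_H(Y_v)$ (which is thus normal in $\Stab_H(Y_v)$, so the quotient $\bar H$ is defined), and hence $\bar H$ acts faithfully by isometries on $Y_v$. Because $Y$ is a tree of spaces over its underlying tree $T$ and isometries of $Y$ preserve this structure, each $h \in \Stab_H(Y_v)$ permutes the edges of $T$ incident to $v$, and therefore permutes the set $\mathcal{P}_v = \{ y_e \mid e \in I_v\} \subset Y_v$. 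So $\bar H$ is identified with a subgroup of $\Gamma' := \Stab_{\Isom(\fhyp)}(\mathcal{P}_v)$.

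Next I would exhibit a uniform lattice inside $\bar H$. Since $G$ acts geometrically on the tree of spaces $Y$ and $Y_v$ is the vertex space lying over $v \in T$, the vertex group $G_v := \Stab_G(Y_v) = \Stab_G(v)$ acts cocompactly on $Y_v$ --- this is the standard fact that a vertex group of a cocompact action on a tree of spaces acts cocompactly on the corresponding vertex space --- and properly, this being inherited from the action of $G$ on $Y$. Passing to the faithful quotient $\bar G_v := G_v/\Fix_{G_v}(Y_v)$, which by the second isomorphism theorem is a subgroup of $\bar H$, I obtain a faithful, proper, cocompact isometric action of $\bar G_v$ on $Y_v \cong \fhyp$; hence $\bar G_v$ is a discrete cocompact --- that is, uniform --- lattice in $\Isom(\fhyp)$, and it stabilizes $\mathcal{P}_v$.

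Finally I would apply Lemma~\ref{lemma:discrete_geometric} with $\Gamma = \bar G_v$ and $\cP = \mathcal{P}_v$; here $\mathcal{P}_v$ is discrete, as recorded in the Notation above. The lemma yields that $\bar G_v$ has finite index in $\Gamma' = \Stab_{\Isom(\fhyp)}(\mathcal{P}_v)$, and its proof shows that $\Gamma'$ acts properly on $Y_v$; since $\Gamma' \supseteq \bar G_v$, it also acts cocompactly, hence geometrically, on $Y_v$. Because $\bar G_v \leqslant \bar H \leqslant \Gamma'$, the group $\bar H$ acts cocompactly on $Y_v$ (it contains $\bar G_v$) and properly on $Y_v$ (it sits inside $\Gamma'$). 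Thus $\bar H = \Stab_H(Y_v)/\Fix_H(Y_v)$ acts geometrically on $Y_v$, which is the assertion.

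The one genuinely substantive input is Lemma~\ref{lemma:discrete_geometric}, which converts ``$\mathcal{P}_v$ is a discrete set stabilized by the uniform lattice $\bar G_v$'' into ``$\Stab_{\Isom(\fhyp)}(\mathcal{P}_v)$ is a uniform lattice'': this is exactly what prevents the a priori possibly much larger group $\bar H$ from failing to be discrete or cocompact. The rest is bookkeeping, with the points demanding care being that restriction realizes $\bar H$ faithfully in $\Isom(\fhyp)$ with kernel precisely $\Fix_H(Y_v)$; that $G_v$ really does act cocompactly on the vertex space $Y_v$ of the ideal model geometry; and that $\bar G_v$ really is a uniform lattice, so that Lemma~\ref{lemma:discrete_geometric} is applicable.
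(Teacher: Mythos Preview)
Your proposal is correct and follows essentially the same approach as the paper's proof, which also reduces the claim to Lemma~\ref{lemma:discrete_geometric} by observing that the vertex stabilizer $\Sigma_i$ (your $\bar G_v$) is a uniform lattice in $\Isom(\fhyp)$ stabilizing the discrete set $\mathcal{P}_v$. The paper's version is a one-line appeal to that lemma; you have simply unpacked the bookkeeping that is left implicit there.
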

    \begin{proof}
        Since the group $\Sigma_i$ acts cocompactly on $Y_v$,  the lemma follows from Lemma~\ref{lemma:discrete_geometric}.
    \end{proof}    

    Without loss of generality, suppose in the following lemmas that the subgroups $\Si_1 \leq G$ and $\Si_1' \leq G'$ stabilize the vertex space $Y_v$. 
    We briefly introduce some terminology: 
    We will say that a $G$ action on a tree is \emph{reduced} if the action is minimal and for each valence two vertex $v$ in $G \backslash T$, if the corresponding edges are distinct, then at least one of the associated edge groups properly embeds in the vertex group associated to $v$.
    A reduced $G$-tree can be obtained from a $G$ tree by taking a $G$-minimal subtree and then removing bad valence two vertices, so the pair of incident edges become a single edge. 
    We note that our notion of reduced is distinct from the notion in~\cite{bestvinafeighn91}, and is tailored to our particular needs.
    
    \begin{lemma} \label{lem:peripheralTransitivity}
     The groups $\Si_1$ and $\Si_1'$ act transitively on the set $\mathcal{P}_v$. 
    \end{lemma}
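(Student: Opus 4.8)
The plan is to prove the equivalent statement that $\Sigma_1$ and $\Sigma_1'$ act transitively on the set $I_v$ of edges of $T$ incident to $v$; since $\mathcal{P}_v$ is the image of $I_v$ under the $\Stab_H(Y_v)$-equivariant map $e \mapsto y_e$, transitivity on $I_v$ passes to transitivity on $\mathcal{P}_v$. The first step is to pin down the relevant vertex stabilizer. Because every edge space of the ideal model geometry $Y$ is a point and $G$ is torsion-free, the $G$-action on $T$ has trivial edge stabilizers, so the infinite one-ended group $\Sigma_1 \leqslant \Stab_G(Y_v) = \Stab_G(v)$ fixes $v$ and no incident edge. The vertex group $\Stab_G(v)$ of the tree-of-spaces decomposition is either trivial or one-ended, hence one-ended since it contains $\Sigma_1$; being a one-ended subgroup of the free product $\Sigma_1 \ast \Sigma_2$, it acts elliptically on, and fixes a unique vertex of, the Bass--Serre tree of $\Sigma_1 \ast \Sigma_2$, and comparing this fixed vertex with the unique fixed vertex of $\Sigma_1$ forces $\Stab_G(v) = \Sigma_1$. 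The same argument with $G' = \Sigma_1' \ast \Sigma_2'$ gives $\Stab_{G'}(v) = \Sigma_1'$.

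The second step is a structural analysis of the reduced $G$-tree. Pass to the $G$-minimal subtree and collapse bad valence-two vertices so that, after relabeling, the $G$-action on $T$ is reduced; since $\Sigma_1 = \Stab_G(v)$ is one-ended and each edge at $v$ lies on the axis of a hyperbolic element of $G$ (the product of nontrivial elements of the two vertex groups it separates), the vertex $v$ together with all of $I_v$ survives, and collapsing bad valence-two vertices only merges consecutive edges, so the $\Sigma_1$-set $I_v$ is unchanged. Now $G \backslash T$ is a reduced graph of groups with fundamental group $\Sigma_1 \ast \Sigma_2$ and trivial edge groups. As $G$ has no infinite cyclic free factor, its underlying graph has trivial first Betti number, i.e.\ is a tree; as a reduced graph has no trivial leaf vertices and no trivial valence-two vertices, and $G$ has exactly the two one-ended free factors $\Sigma_1,\Sigma_2$, the underlying graph must be a single edge joining a vertex carrying $\Sigma_1$ to a vertex carrying $\Sigma_2$. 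Hence $T$ is the Bass--Serre tree of $\Sigma_1 \ast \Sigma_2$, the vertex $v$ has a single $\Stab_G(v)$-orbit of incident edges, and since edge stabilizers are trivial $\Sigma_1$ acts simply transitively on $I_v$, hence on $\mathcal{P}_v$. Running the identical argument for the free-product decomposition $G' = \Sigma_1' \ast \Sigma_2'$ on the same tree $T$ shows $\Sigma_1'$ acts transitively on $\mathcal{P}_v$.

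The main obstacle is the reduction step: the tree $T$ produced by Proposition~\ref{prop:newModelSpace2} need not literally be a reduced $G$-tree, and the operations reducing it (discarding non-minimal subtrees, collapsing bad valence-two vertices) are $G$-equivariant but not obviously $\Isom(Y)$-equivariant, so one must verify they do not disturb the data used elsewhere in this section --- in particular that they neither delete edges incident to $v$ nor alter $\mathcal{P}_v$. The observations that $v$ has one-ended stabilizer, that every edge at $v$ lies on a hyperbolic axis, and that bad valence-two collapses merely merge edges are exactly what is needed to control this, and constitute the point requiring the most care; once $T$ is identified with the free-product Bass--Serre tree the transitivity is immediate from the triviality of edge stabilizers.
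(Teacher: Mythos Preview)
Your proof is correct and follows the same strategy as the paper's: show that the quotient graph of groups $G \backslash T$ must be a single edge joining the vertices carrying $\Sigma_1$ and $\Sigma_2$, so that $\Stab_G(v)$ has a single orbit on incident edges. You are more explicit than the paper on two points --- the identification $\Stab_G(v) = \Sigma_1$ (which you obtain via a Kurosh-type argument in the free product), and the care needed when reducing $T$ to a minimal/reduced $G$-tree without disturbing $I_v$ --- both of which the paper handles implicitly by treating non-minimality or non-reducedness as an immediate contradiction; one small point is that your assertion ``each edge at $v$ lies on the axis of a hyperbolic element (the product of nontrivial elements of the two vertex groups it separates)'' tacitly assumes that the halftree beyond each such edge contains a vertex with nontrivial stabilizer, which follows from the construction of $Y$ but deserves a word.
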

    \begin{proof}
     The lemma follows from the uniqueness of the reduced Stallings-Dunwoody decomposition of $G$ and $G'$ in this case.
     The quotient $G \backslash T$ contains two vertices $u, v$, corresponding to the cosets of $\Si_1$ and $\Si_2$, and a single edge.
     Indeed, if any other vertex existed in $G \backslash T$, then its associated vertex group is trivial.
     If $G \backslash T$ contains more than one edge then either: the graph is not simply connected, the graph contains spurs that are not $u, v$, or the graph is a subdivided edge connecting $u$ and $v$.
     In the first case, if $G \backslash T$ is not simply connected, there would exist a non-trivial homomorphism from $G$ to $\mathbb{Z}$ such that $\Si_1$ and $\Si_2$ were in the kernel, contradicting the fact that $G = \Si_1 * \Si_2$.
     If $G \backslash T$ has spurs that are not either $u$ or $v$, then the action of $G$ on $T$ is not minimal.
     Finally, if $G \backslash T$ is a subdivided edge joining $u$ to $v$, then since the vertices and edges inside the subdivided edge have trivial groups associated to them, the action of $G$ on $T$ would not be reduced.
     Thus, the points in $ \mathcal{P}_v$ correspond to edges in the same $G$-orbit, and indeed in the same $G_v$-orbit.
    \end{proof}

    \begin{lemma} \label{lemma:same_degree}
     There exists $d \in \N$ so that $\Si_1$ and $\Si_1'$ are index-$d$ subgroups of $\Stab_H(Y_v) / \Fix_H(Y_v)$.
     Moreover, if $Y_v \cong \mathbb{H}^2$ then $\Sigma_1 \cong \Si_1'$. More generally, if $Y_v \cong \fhyp$, then $M_1$ and $M_1'$ have the same volume.
    \end{lemma}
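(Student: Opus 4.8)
The plan is to realize both $\Si_1$ and $\Si_1'$ as finite-index subgroups of a single ambient group — namely $Q := \Stab_H(Y_v)/\Fix_H(Y_v)$, which acts faithfully and geometrically on $Y_v \cong \fhyp$ — and then to read the two indices off from the common action on $\mathcal{P}_v$, showing they agree. Once the indices agree, the volume equality is immediate from multiplicativity of covolume, and in the surface case Gauss--Bonnet upgrades this to an isomorphism.

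First I would set up the ambient lattice. Since $\Fix_H(Y_v)$ is by definition the kernel of the restriction homomorphism $\Stab_H(Y_v) \to \Isom(Y_v)$, the group $Q$ acts faithfully on $Y_v$, and by Lemma~\ref{lemma:geo_action} this action is geometric; hence $Q$ is a uniform lattice in $\Isom(\fhyp)$. Because $\Si_1 = \pi_1(M_1)$ is torsion-free and acts properly on $Y_v$, its point stabilizers are trivial, so $\Si_1 \cap \Fix_H(Y_v) = 1$; thus $\Si_1$ embeds in $Q$, acting freely and cocompactly on $Y_v$ with quotient (isometric to) $M_1$ by Mostow rigidity in dimension $\geq 3$, and a hyperbolic surface of the same genus as $M_1$ in dimension $2$. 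The same applies to $\Si_1' \hookrightarrow Q$ with quotient $M_1'$. Since $\Si_1$ and $\Si_1'$ act cocompactly on $Y_v$ while $Q$ acts properly, Lemma~\ref{lemma:subgroup} shows that $d_1 := [Q:\Si_1]$ and $d_1' := [Q:\Si_1']$ are finite.

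The crux is the equality $d_1 = d_1'$. The set $\mathcal{P}_v$ consists of points of $Y_v$ (the edge spaces of the ideal model geometry are points), it is $\Stab_H(Y_v)$-invariant, and $\Fix_H(Y_v)$ fixes it pointwise, so $Q$ acts on $\mathcal{P}_v$; the restricted $\Si_1$- and $\Si_1'$-actions are transitive by Lemma~\ref{lem:peripheralTransitivity} and free (again by torsion-freeness and properness of the action on $Y_v$). Fix $y_0 \in \mathcal{P}_v$ and set $S := \Stab_Q(y_0)$, a finite group since $Q$ acts properly. Transitivity of $\Si_1$ gives $Q = \Si_1 S$, and freeness gives $\Si_1 \cap S = 1$; the elementary consequence is that $s \mapsto \Si_1 s$ is a bijection $S \to \Si_1\backslash Q$, whence $[Q:\Si_1] = |S|$. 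The identical argument applied to $\Si_1'$ yields $[Q:\Si_1'] = |S|$. Therefore $d := d_1 = d_1' = |\Stab_Q(y_0)|$, which is the first assertion of the lemma.

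Finally, since the covolume of a lattice in $\Isom(\fhyp)$ scales by the index on passing to a finite-index subgroup, $\Vol(M_1) = [Q:\Si_1]\,\Vol(Q\backslash\fhyp) = d\,\Vol(Q\backslash\fhyp) = [Q:\Si_1']\,\Vol(Q\backslash\fhyp) = \Vol(M_1')$. When $Y_v \cong \mathbb{H}^2$, Gauss--Bonnet turns equality of areas into $\chi(M_1) = \chi(M_1')$, so $M_1$ and $M_1'$ are closed orientable surfaces of equal genus and $\Si_1 \cong \Si_1'$. The only real obstacle is the index identity $d_1 = d_1'$: it depends on knowing the $\Si_i$-actions on $\mathcal{P}_v$ are simultaneously free and transitive — transitivity being exactly Lemma~\ref{lem:peripheralTransitivity}, and freeness coming from torsion-freeness plus properness on $Y_v$ — together with the bookkeeping that a subgroup acting freely and transitively on $Q/S$ forces $[Q:\Si]=|S|$. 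Everything else (identifying $Q$ as a uniform lattice, multiplicativity of covolume, Gauss--Bonnet) is standard.
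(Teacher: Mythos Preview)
Your proof is correct and follows essentially the same approach as the paper: both arguments embed $\Si_1,\Si_1'$ in $Q=\Stab_H(Y_v)/\Fix_H(Y_v)$ and compute each index as the order of the $Q$-stabilizer of a point in $\cP_v$, using transitivity (Lemma~\ref{lem:peripheralTransitivity}) and freeness of the $\Si_i$-actions. The paper phrases this via orbifold covers and local degree at the singular point $o\in\cO_v$, whereas you use the orbit-stabilizer bookkeeping $Q=\Si_1 S$, $\Si_1\cap S=1$ directly; these are the same computation in different clothing.
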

    \begin{proof}
     By Lemma~\ref{lemma:geo_action}, the action of $\Stab_H(Y_v) / \Fix_H(Y_v)$ on $Y_v$ is geometric with quotient $\cO_v$, a compact hyperbolic orbifold.
     Since $\Sigma_1$ and $\Sigma_1'$ act freely on $Y_v$ they embed in $\Stab_H(Y_v) / \Fix_H(Y_v)$, so we obtain finite-sheeted orbifold covering maps ${f: M_1 \rightarrow \cO_v}$ and ${f': M_1' \rightarrow \cO_v}$.
     It suffices to show that $f$ and $f'$ both have the same degree.
     By Lemma~\ref{lem:peripheralTransitivity} the groups $\Sigma_1$, $\Sigma_2$, and $\Stab_H(Y_v) / \Fix_H(Y_v)$ act transitively on $\cP_v$, so there exists points $m \in M_1$, $m' \in M_1'$, and $o \in \cO_v$ corresponding to the quotient of that orbit.
     The degrees of $f$ and $f'$ are determined by the local degrees of the covering at $m$ and $m'$, which can be read off from the orbifold data at $o$, specifically the order of the finite group associated to a chart corresponding to $o$.
     
     Thus, the Euler characteristic $\chi(M_1) = \chi(M_1') = d \chi(\cO_v)$. So if $M_1$ and $M_2$ are surfaces, then they are homeomorphic, hence $\Sigma_1 \cong \Si_1'$. 
     Otherwise we can deduce that volume of $M_1$ and $M_1'$ is simply $d$ times the volume of $\cO_v$.
    \end{proof}

    \begin{proof}[Proof of Theorem~\ref{thm_intro_surface_case}]
     As above, we can assume that $\Sigma_1$ and $\Sigma_1'$ stabilize a vertex space $Y_v$ and conclude as in Lemma~\ref{lemma:same_degree} that $\Sigma_1 \cong \Sigma_1'$.
     Then we can also deduce that $\Sigma_2$ and $\Sigma_2'$ stabilize a vertex space $Y_u$ and similarly conclude that  $\Sigma_2 \cong \Sigma_2'$
    \end{proof}
    
    \begin{proof}[Proof of Theorem~\ref{thm_intro_amalgamation_case}]
     As above, we can assume that $\Sigma_1$ and $\Sigma_1'$ stabilize a vertex space $Y_v$ and conclude as in Lemma~\ref{lemma:same_degree} that $M_1$ and $M_1'$ have the same volume.
     Then we can also deduce that $\Sigma_2$ and $\Sigma_2'$ stabilize a vertex space $Y_u$ and similarly conclude that $M_2$ and $M_2'$ also have the same volume.
    \end{proof}

   \section{Action Rigidity and Leighton's Theorem} \label{sec:commonCover}
   
   The goal of this section is to prove the following theorem. 
   
   \begin{thm} \label{thm_FINAL}
    Let $G$ be a finitely generated, infinite-ended group.
    Suppose that the Stallings-Dunwoody decomposition of $G$ contains at least one one-ended vertex group, and that all one-ended vertex groups are quasi-isometric to a rank-1 symmetric space.
    If $G$ and $G'$ share a common model geometry $X$, then there exists a quasi-isometry $f : X \rightarrow Y$ to an ideal model geometry $Y$ that quasi-conjugates the $ \Isom(X)$-action on $X$ to an isometric action of $\Isom(X)$ on $Y$.
    Let $F: \Isom(X) \rightarrow \Isom(Y)$ denote the induced homomorphism.
    If $G$ and $G'$ are both residually finite, then $F(G)$ and $F(G')$ are weakly commensurable in $\Isom(Y)$. 
   \end{thm}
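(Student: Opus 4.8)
The plan is to upgrade the shared model geometry to an \emph{ideal} one using Proposition~\ref{prop:newModelSpace2}, and then, after a sequence of covering space manipulations, to invoke the symmetry restricted Leighton theorem (Theorem~\ref{thm:symmetryRestrictedLeighton}).

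\emph{Step 1: the ideal model geometry and reduction to a common cover.} Since $G$ is finitely generated, infinite-ended, its Stallings--Dunwoody decomposition contains a one-ended vertex group, and all one-ended vertex groups are quasi-isometric to rank-$1$ symmetric spaces, Proposition~\ref{prop:newModelSpace2} (via Theorems~\ref{intro_nicespace} and~\ref{thm:actionPromotion}) produces a quasi-isometry $f\from X\to Y$ onto an ideal model geometry $Y$ that quasi-conjugates the $\Isom(X)$-action on $X$ to an isometric $\Isom(X)$-action on $Y$; let $F\from\Isom(X)\to\Isom(Y)$ be the induced homomorphism. Realise $Y$ as a tree of spaces over its Bass--Serre tree $T$, with vertex spaces copies of rank-$1$ symmetric spaces $\fhyp$ or points and edge spaces points. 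As the $G$- and $G'$-actions on $X$ are proper, $F|_G$ and $F|_{G'}$ have finite kernel, so $\bar G:=F(G)$ and $\bar G':=F(G')$ act geometrically on $Y$. Since $G$ and $G'$ are residually finite, so are their subgroups, in particular the one-ended vertex groups of their Stallings--Dunwoody decompositions (which, by Theorem~\ref{thm:papasogluwhyte}, are quasi-isometric to rank-$1$ symmetric spaces); hence Lemma~\ref{lemma:torsion_free} shows $G$ and $G'$ are virtually torsion-free. Choosing torsion-free finite-index $G_1\leqslant G$, $G_1'\leqslant G'$, the images $\bar G_0:=F(G_1)\cong G_1$ and $\bar G_0':=F(G_1')\cong G_1'$ are torsion-free, have finite index in $\bar G$ and $\bar G'$, and act freely and geometrically on $Y$. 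Weak commensurability in $\Isom(Y)$ is inherited by finite-index overgroups (if $h$ commensurates $\bar G_0$ to $\bar G_0'$ then, since $\bar G_0^{\,h}\cap\bar G_0'\leqslant\bar G^{\,h}\cap\bar G'\leqslant\bar G^{\,h}$ and the first term has finite index in the last, $\bar G^{\,h}\cap\bar G'$ has finite index in $\bar G^{\,h}$ and, symmetrically, in $\bar G'$), so it suffices to show $\bar G_0$ and $\bar G_0'$ are weakly commensurable in $\Isom(Y)$. Setting $\chi:=\bar G_0\backslash Y$ and $\chi':=\bar G_0'\backslash Y$ -- compact graphs of spaces with point edge spaces, closed-hyperbolic-manifold or point vertex spaces, fundamental groups $\bar G_0$ and $\bar G_0'$, and common universal cover $Y$ -- it is enough, by the reduction of Theorem~\ref{thm:weakCommensurabilityInIdeal}, to produce isomorphic finite covers of $\chi$ and $\chi'$.

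\emph{Step 2: aligning vertex spaces and reaching a locally finite tree.} For a vertex space $Y_v\cong\fhyp$, Lemmas~\ref{lemma:discrete_geometric} and~\ref{lemma:geo_action} show that $\Stab_{\Isom(Y)}(Y_v)$, modulo the finite pointwise stabiliser of $Y_v$, is a uniform lattice $\Lambda_v\leqslant\Isom(\fhyp)$ containing $\Stab_{\bar G_0}(Y_v)$ and $\Stab_{\bar G_0'}(Y_v)$ with finite index; hence the manifold vertex groups of $\bar G_0$ and $\bar G_0'$ over a common $\Isom(Y)$-orbit of vertex spaces are commensurable. A volume/orbifold-covering count in the spirit of Lemma~\ref{lemma:same_degree}, together with separability of the vertex groups, then allows us to pass to further finite-index subgroups so that $\chi$ and $\chi'$ share their collection of manifold vertex spaces $\{N_w\}$, each attached to the remaining ``graph part'' along a marked finite point set. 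The two obstructions to applying Leighton's theorem are that $T$ is not locally finite (a vertex carrying a copy of $\fhyp$ has infinite valence) and that the actions on $T$ are not free (a one-ended manifold group has no free action on any tree, so the manifold vertex spaces cannot be covered away). Following the covering space arguments of Section~\ref{sec:commonCover}, one passes to a common infinite-sheeted cover $\breve\chi$ of $\chi$ and $\chi'$, collapses the aligned manifold vertex spaces $\{N_w\}$, and unrolls the residual graph part -- on which finite-index subgroups of $\bar G_0$ and $\bar G_0'$, being virtually free, act freely -- to obtain a locally finite tree $T'$ carrying free, cocompact actions of free uniform lattices $F_0,F_0'$ encoding $\bar G_0$ and $\bar G_0'$. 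The manifold and attaching data become a decoration of $T'$ of bounded range: there is $R>0$ such that automorphisms agreeing on every $R$-ball of $T'$ agree on the decorated pieces, so $F_0$ and $F_0'$ lie in a common $R$-symmetry restricted subgroup of $\Aut(T')$ in the sense of Definition~\ref{def:symmres}.

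\emph{Step 3: applying symmetry restricted Leighton and concluding.} Theorem~\ref{thm:symmetryRestrictedLeighton} now yields an element $\theta$ of this $R$-symmetry restricted subgroup with $F_0^{\,\theta}\cap F_0'$ of finite index in $F_0^{\,\theta}$ and in $F_0'$. Unwinding the covering constructions, and using the alignment of vertex spaces from Step~2 so that the resulting common cover of the graph part reassembles into a genuine common finite cover of $\chi$ and $\chi'$, one obtains an isometry of $Y$ that commensurates $\bar G_0$ to $\bar G_0'$ in $\Isom(Y)$. By the ascent property recorded in Step~1, $F(G)=\bar G$ and $F(G')=\bar G'$ are weakly commensurable in $\Isom(Y)$, which is the assertion of Theorem~\ref{thm_FINAL}.

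\emph{Main obstacle.} The heart of the argument is Step~2: one must simultaneously defeat the failure of local finiteness of $T$ and the non-freeness of the actions while respecting that one-ended manifold vertex groups can never act freely on a tree and so cannot be covered away. The leverage is that having a common \emph{ideal} model geometry forces the manifold vertex groups over a fixed vertex space to be commensurable (Lemmas~\ref{lemma:discrete_geometric}--\ref{lemma:geo_action}); once the manifold pieces have been aligned, the remaining problem -- made delicate by the fact that finite covers of a closed hyperbolic manifold grow in volume and that equal-volume hyperbolic manifolds need not be homeomorphic -- is reduced to matching the purely graph-theoretic gluing data, which is exactly the setting where the \emph{symmetry restricted} form of Leighton's theorem, rather than the classical one, is needed. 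Verifying the $R$-symmetry restriction hypothesis for $T'$, and keeping track of basepoints so that the element produced by Theorem~\ref{thm:symmetryRestrictedLeighton} is realised inside $\Isom(Y)$ rather than merely inside $\Aut(T')$, are the other points requiring care.
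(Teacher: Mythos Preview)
Your overall architecture matches the paper's: reduce to an ideal model geometry via Proposition~\ref{prop:newModelSpace2}, then prove weak commensurability of the resulting uniform lattices in $\Isom(Y)$ (the paper isolates this as Theorem~\ref{thm:weakCommensurabilityInIdeal}) via symmetry-restricted Leighton. Step~1 and Step~3 are essentially what the paper does.

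Step~2, however, has real gaps. The appeal to ``a volume/orbifold-covering count in the spirit of Lemma~\ref{lemma:same_degree}'' is misplaced: that lemma depends on the transitivity of Lemma~\ref{lem:peripheralTransitivity}, which is specific to the two-factor case, and in any event a volume count does not produce a \emph{common normal} finite-index subgroup of all the relevant vertex stabilisers. What the paper actually does (Section~\ref{sec_step1}) is take, for each one-ended $v\in VT$, the intersection $\hat K_v=\bigcap_{hu=v} q_v(\Gamma_u^h)\cap q_v((\Gamma_u')^h)$ over \emph{all} $h\in H=\Isom(Y)$; this is finite-index and \emph{normal} in $K_v$, and that normality is exactly what makes the resulting vertex-space covers regular and forces the one-ended vertex spaces of $\hat\chi$ and $\hat\chi'$ lying over the same $H$-orbit to be genuinely isometric rather than merely commensurable. ``Separability of the vertex groups'' plays no role. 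You then assert the existence of the common cover $\breve\chi$ and the $R$-symmetry restriction without argument; in the paper these are Proposition~\ref{prop:stiffUniversalCover} (an inductive construction of an isometry $\breve\chi\to\breve\chi'$ along $\breve T$, which works precisely because on each one-ended vertex both sides are $\hat K_v\backslash X_v^+$ for the \emph{same} $\hat K_v$) and Lemma~\ref{lemma:symmetry_restricted} (which shows $R=1$ suffices: an element of $\Aut(\breve T)$ locally matching $\Phi(\Isom(\breve\chi))$ on $1$-balls lifts vertex-by-vertex to an element of $\Isom(\breve\chi)$, the overlaps being single edge spaces, i.e.\ points). Without the normal-core construction neither of these two steps goes through, so your Step~2 as written is an outline of the right shape rather than a proof.
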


   Abstract commensurability of $G$ and $G'$ follows from their residual finiteness, and the weak commensurability of $F(G)$ and $F(G')$. So Theorem~\ref{thm_first} follows immediately from the statement of Theorem~\ref{thm_FINAL}. 
   All that remains to be proven of Theorem~\ref{thm_FINAL} is given by the following statement: 
   
   \begin{thm} \label{thm:weakCommensurabilityInIdeal}
   Let $X$ be an ideal model geometry.
   Let $\Gamma$ and $\Gamma'$ be uniform lattices of $\Isom(X)$. 
   If $\Gamma$ and $\Gamma'$ are residually finite, then they are weakly commensurable in $\Isom(X)$.
   \end{thm}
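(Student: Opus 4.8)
The plan is to reduce Theorem~\ref{thm:weakCommensurabilityInIdeal} to the symmetry restricted Leighton theorem (Theorem~\ref{thm:symmetryRestrictedLeighton}), with most of the work going into arranging that theorem's hypotheses.

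First I would make the torsion-free reduction. Since $X$ is a tree of copies of rank-$1$ symmetric spaces glued along single points it is Gromov hyperbolic, so $\Gamma$ and $\Gamma'$ are hyperbolic groups and in particular have only finitely many conjugacy classes of finite subgroups; using residual finiteness, pass to torsion-free finite-index subgroups $\Gamma_0 \leqslant \Gamma$ and $\Gamma_0' \leqslant \Gamma'$. Weak commensurability in $\Isom(X)$ is inherited from finite-index subgroups: if $g \in \Isom(X)$ makes $g\Gamma_0 g^{-1} \cap \Gamma_0'$ finite index in both $g\Gamma_0 g^{-1}$ and $\Gamma_0'$, then $g\Gamma g^{-1} \cap \Gamma'$ contains it and hence is finite index in both $g\Gamma g^{-1}$ and $\Gamma'$. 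So it suffices to treat $\Gamma_0$ and $\Gamma_0'$. Form the finite graphs of spaces $\chi = \Gamma_0 \backslash X$ and $\chi' = \Gamma_0' \backslash X$, whose vertex spaces are closed hyperbolic manifolds (with universal cover a rank-$1$ symmetric space) or points, whose edge spaces are points, and with $\widetilde{\chi} \cong \widetilde{\chi'} \cong X$ isometrically; the aim becomes to find an isometry of $X$ commensurating a finite-index subgroup of $\Gamma_0$ to one of $\Gamma_0'$.

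Next, the symmetric-space pieces. For a vertex space $Y_v \cong \mathbb{H}^n_{\mathbb{F}}$ of $X$, the setwise stabilizer $H_v \leqslant \Isom(X)$ surjects onto a uniform lattice $\bar H_v \leqslant \Isom(\mathbb{H}^n_{\mathbb{F}})$ with finite kernel (Lemma~\ref{lemma:discrete_geometric}), so each manifold vertex space of $\chi$ or $\chi'$ over $v$ is a finite cover of the closed orbifold $\mathcal O_v = \bar H_v \backslash \mathbb{H}^n_{\mathbb{F}}$; these manifolds are therefore pairwise commensurable, and since two finite-index subgroups of the orbifold fundamental group of $\mathcal O_v$ meet in a finite-index subgroup (no separability is needed), they can be matched up after passing to further finite covers of $\chi$ and $\chi'$. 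This is the only step where rigidity of the pieces is used. The remaining, combinatorial, difficulty is the crux: the Bass--Serre tree $T$ of $X$ is \emph{not} locally finite, since a vertex over a symmetric-space piece has valence the order of its infinite peripheral lattice, and $\Gamma_0, \Gamma_0'$ do not act freely on $T$, so Leighton's theorem cannot be applied to $T$ directly. Following the covering-space techniques of~\cite{Woodhouse18, ShepherdGardamWoodhouse}, I would produce, through a sequence of finite and infinite-sheeted covers, a common cover $\breve\chi$ of $\chi$ and $\chi'$ whose induced decomposition has a \emph{locally finite} underlying tree $\breve T$ --- obtained by replacing each symmetric-space piece with bounded combinatorial data recording a neighbourhood of its peripheral orbits --- on which finitely generated free subgroups $F, F' \leqslant \Aut(\breve T)$ built from $\Gamma_0$ and $\Gamma_0'$ act freely and cocompactly as free uniform lattices. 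Residual finiteness of $\Gamma_0$ and $\Gamma_0'$ is exactly what provides the intermediate finite covers needed for this reorganisation.

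Finally I would invoke Theorem~\ref{thm:symmetryRestrictedLeighton}. The lattices $F$ and $F'$ lie in a common subgroup $H \leqslant \Aut(\breve T)$ that is $R$-symmetry restricted for a suitable $R$: an automorphism arising from the construction must, on the $R$-ball about each vertex coming from a symmetric-space piece, agree with the restriction of an isometry of the corresponding $\mathbb{H}^n_{\mathbb{F}}$, and $R$ can be taken uniform because only finitely many orbifolds $\mathcal O_v$ appear, so that $\mathscr S_R(H) = H$. Theorem~\ref{thm:symmetryRestrictedLeighton} then gives $h \in H$ with $F^h \cap F'$ finite index in both $F^h$ and $F'$. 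Unwinding: $h$, together with the isometries of the symmetric-space pieces fixed in the matching step, assembles into an isometry $g \in \Isom(X)$, and --- tracking the finite-index relations between $F, F'$ and $\Gamma_0, \Gamma_0'$ and using Mostow-type rigidity to promote the combinatorial matching to an isometric one --- $g$ commensurates a finite-index subgroup of $\Gamma_0$ to one of $\Gamma_0'$, hence (by the first step) $\Gamma$ and $\Gamma'$ are weakly commensurable in $\Isom(X)$. The main obstacle is the middle step: engineering the common cover so that its underlying tree is genuinely locally finite while keeping the group actions free and cocompact, and then checking both that the resulting lattices lie in a common symmetry restricted subgroup and that weak commensurability there descends to $\Isom(X)$.
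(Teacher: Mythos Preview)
Your outline is essentially the paper's own strategy: reduce to torsion-free lattices, normalise the vertex spaces so that $\chi$ and $\chi'$ become locally isomorphic, build a common regular cover $\breve\chi$ whose underlying tree $\breve T$ is locally finite, and then apply symmetry-restricted Leighton's theorem to the resulting free uniform lattices. Two points where the paper's execution is cleaner than your sketch: first, the vertex spaces in $\breve\chi$ are not ``bounded combinatorial data'' but remain the actual closed hyperbolic manifolds $\hat K_v\backslash X_v^+$; the local finiteness of $\breve T$ comes simply from the fact that the cover $\breve\chi\to\hat\chi$ (obtained by killing all vertex groups, i.e.\ pulling back along the universal cover of the underlying graph $\hat\Upsilon$) restricts to an isometry on each vertex space, so valences in $\breve T$ equal those in the finite graph $\hat\Upsilon$. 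Second, the symmetry-restricted subgroup is taken to be $\breve H=\Phi(\Aut(\breve\chi))$ itself, and one checks directly that this is $1$-symmetry restricted (Lemma~\ref{lemma:symmetry_restricted}); consequently the commensurating element $\breve h$ produced by Theorem~\ref{thm:symmetryRestrictedLeighton} already lies in the image of $\Aut(\breve\chi)$ and lifts to $\Isom(X)$ by covering-space theory alone---no ``assembling'' step or appeal to Mostow rigidity is needed at the end.
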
  
   
   \begin{proof}[Proof of Theorem~\ref{thm_FINAL}]
    Proposition~\ref{prop:newModelSpace2} yields the quasi-conjugacy~$f$.
    The statement follows immediately by an application of Theorem~\ref{thm:weakCommensurabilityInIdeal}, since the groups $F(G)$ and $F(G')$ will be quotients of $G$ and $G'$ by finite, normal subgroups, and will therefore also be residually finite.
   \end{proof}

   We take a moment to motivate the proof of Theorem~\ref{thm:weakCommensurabilityInIdeal}. See Figure~\ref{fig:cartoon} for an illustration. 
   Let $X$ be an ideal model geometry for $\Gamma, \Gamma' \leqslant \Isom(X)$.
   By residual finiteness, we can assume that $\Gamma$ and $\Gamma'$ are torsion free by passing to finite-index subgroups.
   The spaces $\chi = \Gamma \backslash X$ and $\chi' = \Gamma' \backslash X$ decompose as finite graphs of spaces with vertex spaces that are closed hyperbolic manifolds or points and edge spaces that are isometric to $[0,1]$. 
   We think of these spaces as being a hybrid between graphs and hyperbolic manifolds.
   The ultimate goal is to construct homeomorphic finite covers of $\chi$ and $\chi'$, which would imply their fundamental groups are abstractly commensurable.
   To construct these covers, we set up the framework to apply symmetry-restricted Leighton's theorem (Theorem~\ref{thm:symmetryRestrictedLeighton} in Section~\ref{sec:SymmetryRestrictedLeigthon}). 
   Importantly, this theorem applies only to locally finite trees and to groups that act freely and cocompactly on such a tree. 
    So, we find a common (infinite-sheeted) cover $\breve{\chi}$ of $\chi$ and $\chi'$ so that the underlying tree is locally finite and so that $\pi_1(\chi)$ and $\pi_1(\chi')$ virtually act freely on this tree by deck transformations. 

   In the first stage of our argument, we pass to finite covers $\hat{\chi} \rightarrow \chi$ and $\hat{\chi}' \rightarrow \chi'$ that are {\it locally isomorphic} in the sense that if $\hat \chi_v$ and $\hat \chi_v'$ are vertex spaces of $\hat \chi$ and $\hat \chi'$ that have lifts to $X$ in the same $\Isom(X)$ orbit, then $\hat \chi_v$ and $\hat \chi_v'$ are isometric.
   In Section~\ref{sec_step1} we construct common covers of the vertex spaces by taking a certain kind of normal core of the vertex groups. 
   In Section~\ref{sec_step2} we obtain $\hat{\chi}$ and $\hat{\chi}'$ by constructing quotient homomorphisms from $\pi_1(\chi)$ and $\pi_1(\chi')$ to virtually free groups obtained by quotienting the vertex groups by the normal cores obtained in the previous section.
   By passing to torsion-free finite-index subgroups of the virtually free groups, we obtain finite-index subgroups corresponding to $\hat{\chi}$ and $\hat{\chi}'$.
   In Section~\ref{sec_step3}, we prove that $\hat{\chi}$ and $\hat{\chi}'$ have isometric regular covers $\breve{\chi} \cong \breve{\chi}'$ which decompose as locally finite trees of spaces. 
   Finally, in Section~\ref{sec_step4}, we prove that if $\breve{T}$ is the underlying tree of the space $\breve{X}$, then $\Isom(\breve{X}) \leq \Isom(\breve{T})$ is symmetry restricted. 
   The images of $\pi_1(\hat{\chi})$ and $\pi_1(\hat{\chi}')$ in $\Isom(\breve{\chi})$ are free uniform lattices, so the main theorem will follow from an application of Theorem~\ref{thm:symmetryRestrictedLeighton}.

   \begin{figure}
	\begin{overpic}[width=.65\textwidth,tics=5,]{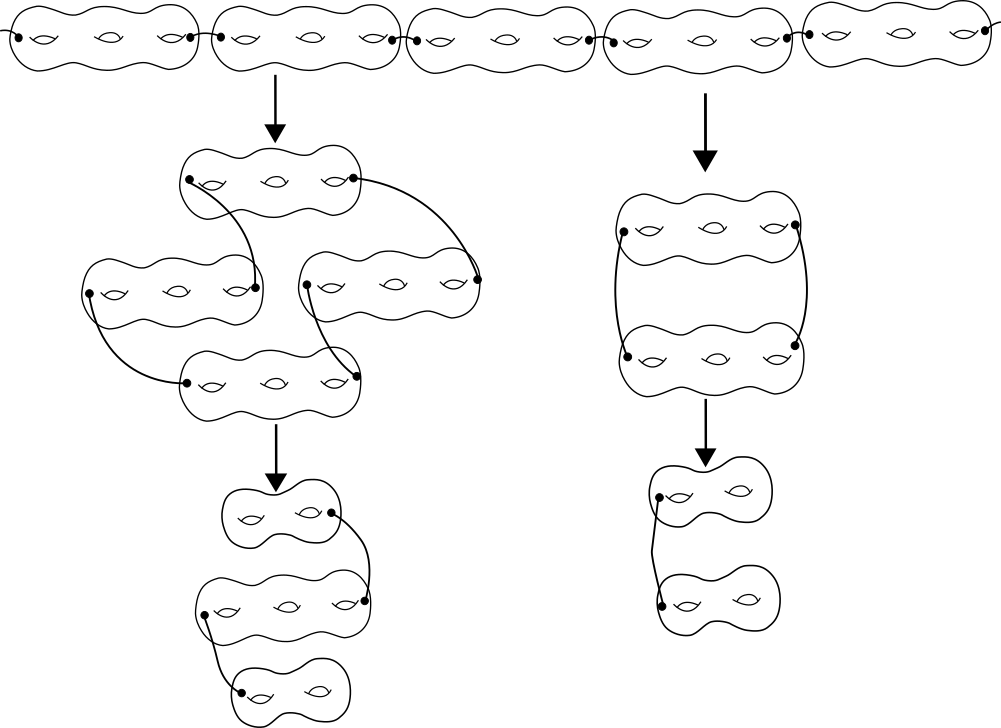} 
	\put(16,15){$\chi$}
	\put(61,15){$\chi'$}
        \put(5,44){$\hat{\chi}$}
        \put(57,44){$\hat{\chi}'$}
        \put(-3,65){$\breve{\chi}$}
          \end{overpic}
	\caption{An illustration of the proof up to the point of Proposition~\ref{prop:stiffUniversalCover}. The space $\breve{X}$ is a common cover of $\chi$ and $\chi'$ with locally finite underlying tree. 
	Note that the underlying tree of $\breve{\chi}$ in this example is the simplicial line, but in general it could be any locally finite tree.}
	\label{fig:cartoon}
    \end{figure}

\subsection{Finite covers of the vertex spaces} \label{sec_step1}
   
   Let $H = \Isom(X)$.  
   Let $v \in VT$, let $X_v^+ = N_1(X_v)$, let $H_v = Stab_H(X_v^+)$,  and let $$K_v = H_v / \textrm{Fix}_H(X_v^+) \leqslant \Isom(X_v^+).$$  
   Let $q_v: H_v \rightarrow K_v$ denote the quotient map.
   The space $X_v^+$ is either isomorphic to a vertex with a finite number of edges attached in a star, or it is isomorphic to $\mathbb{H}_{\mathbb{F}}^n$ with edges isometric to $[0,1]$ attached at a discrete subset of distinct points. 
   We consider the latter case where $X_v$ is isometric to $\mathbb{H}_{\mathbb{F}}^n$.
   We will refer to such $v \in VT$ as \emph{one-ended} vertices and denote their subset by $V_1T \subseteq VT$.

   The group $K_v$ acts geometrically on $X_v^+$. Indeed, since the edges attached to $X_v$ in $X_v^+$ are preserved by $K_v$, we deduce, as before (see the proof of Lemma~\ref{lemma:geo_action}), that $K_v$ acts properly on $X_v$. 
    Both $\Gamma_v$ and $\Gamma'_v$ are embedded in $K_v$ by the map $q_v$ (again, see the proof of Lemma~\ref{lemma:geo_action}). 
    So, $K_v$ acts cocompactly on $X_v$ and $\Gamma_v$ and $\Gamma_v'$ are embedded as finite-index subgroups of $K_v$ by Lemma~\ref{lemma:subgroup}.  
    Thus, the group $K_v$ acts geometrically on $X_v^+$.
    Moreover, for all $h \in H$ such $hu = v$ the subgroups $\Gamma_u^h$ and $(\Gamma'_u)^h$ are also embedded as finite-index subgroups of $K_v$ via the map $q_v$. 
   Since there are only finitely many $\Gamma$ and $\Gamma'$ orbits of vertices, there is a global upper bound on the index of $q_v(\Gamma_u^h), q_v((\Gamma_u')^h) \leqslant K_v$.
   Therefore, the following group is a finite-index normal subgroup of $K_v$
   \[
    \hat{K}_v := \bigcap_{\{h \in H \,|\, hu = v\}} q_v(\Gamma_u^h) \cap q_v\left((\Gamma_u')^h\right) \trianglelefteq K_v.
   \]   
   \noindent Indeed, to verify normality, let $k \in K_v$. The element $k$ is represented by some $h' \in H_v$.
   Then, 
   \begin{align*}
    \hat{K}_v^k  & = \left( \bigcap_{\{h \in H \,|\, hu = v\}} \Big(q_v(\Gamma_u^h) \cap q_v\left((\Gamma_u')^h\right)\Big)^k \right) \\
                 & = \left( \bigcap_{\{h'h \in H \,|\, h'hu = v\}} q_v(\Gamma_u^{h'h}) \cap q_v\left((\Gamma_u')^{h'h}\right) \right)\\
                 & = \hat{K}_v
   \end{align*}
   Moreover, since this same computation holds for all $k \in q_v(\Gamma_v)$ and $k \in q_v(\Gamma_v')$ we deduce that $\hat{K}_v$ is also a normal subgroup of $q_v(\Gamma_v)$ and $q_v(\Gamma_v')$. 
   
   If $v \in V_1T$, then $\hat{K}_v$ is a finite-index subgroup of both $\Gamma_v$ and $\Gamma_v'$. Let $\phi_v : \hat{\chi}_v \rightarrow \chi_v$ and $\phi_v' : \hat{\chi}_v' \rightarrow \chi_v'$ be the associated finite-sheeted regular covers.  

\subsection{Locally-isomorphic finite covers $\hat{\chi} \rightarrow \chi$ and $\hat{\chi}' \rightarrow \chi'$} \label{sec_step2}

   Suppose the spaces $\chi = \Gamma \backslash X$ and $\chi' = \Gamma' \backslash X$ have underlying graphs $\Upsilon$ and $\Upsilon'$, respectively.
   Recall, the \emph{cone} of a topological space $Z$ is defined to be the quotient space $\textrm{Cone}(Z) = \left( Z \times [0,1] \right) / \left( Z \times \{ 1 \} \right)$. 
   Define quotient spaces 
   \[Y = \left(\chi \bigsqcup_{u \in V_1\Upsilon} \textrm{Cone}(\hat{\chi}_u)\right) \Big/ \sim 
     \; \,\, \textrm{  and  } \; \;\,
     Y' = \left(\chi' \bigsqcup_{u \in V_1\Upsilon'} \textrm{Cone}(\hat{\chi}_u') \right) \Big/ \sim 
   \]
   where the equivalence relations $\sim$ are given by $\phi_u(x) \sim (x,0)$ and $\phi_u'(x') \sim (x',0)$ for each $u \in V_1\Upsilon$, where $\phi_u$ and $\phi_u'$ are the covering maps defined above.
   There are natural embeddings $\theta: \chi \rightarrow Y$ and $\theta' : \chi' \rightarrow Y'$.
   
   On a group theoretic level, 
   $$\pi_1 (\chi) = \pi_1(\chi_{v_1}) * \cdots * \pi_1(\chi_{v_m}) * F_\ell, $$
   where $v_1, \ldots, v_m$ are the one ended vertices in $\Upsilon$.
   Then, take the quotient $$\pi_1( Y) = \pi_1(\chi_{v_1}) / \pi_1(\hat{\chi}_{v_1}) * \cdots * \pi_1(\chi_{v_m}) / \pi_1(\hat{\chi}_{v_m}) * F_\ell.$$
   An analogous construction is applied to $\pi_1(\chi')$ to obtain $\pi_1(Y')$.
   The groups $\pi_1 (\chi_{v_i})/\pi_1(\hat{\chi}_{v_i})$ and $\pi_1 (\chi_{v_i}')/ \pi_1(\hat{\chi}_{v_i}')$ are finite groups. Hence, the groups $\pi_1(Y)$ and $\pi_1(Y')$ are virtually free. Thus, there exist finite-sheeted covers $\hat{Y} \rightarrow Y$ and $\hat{Y}' \rightarrow Y'$ with free fundamental groups.
   Let $\hat{\chi} \rightarrow \chi$ be the finite-sheeted cover corresponding to $\theta^{-1}_*(\pi_1 (\hat{Y}))$, and $\hat{\chi}' \rightarrow \chi'$ be the finite-sheeted cover corresponding to $(\theta')^{-1}_*(\pi_1( \hat{Y}'))$.
   The vertex spaces in $\hat{\chi}$ covering $\chi_u$ are isomorphic to $\hat{\chi}_u$ and the covering maps are precisely $\phi_u$.
   An analogous statement holds for the vertex spaces in $\hat{\chi}'$.

\subsection{A common regular cover with locally finite underlying graph} \label{sec_step3}

   There is a normal subgroup of $\pi_1 (\hat{\chi})$ generated by the vertex groups and all  their conjugates in $\pi_1( \hat{\chi})$.
   The corresponding regular cover $\breve{\chi} \rightarrow \hat{\chi}$ decomposes as a tree of spaces.
   The induced covering map $\breve{\chi}_v \rightarrow \hat{\chi}_u$, given by restricting to a vertex space, is an isometry.
   Alternatively, if $\hat{\Upsilon}$ is the underlying graph for $\hat{\chi}$, then $\breve{\chi}$ is the covering space determined by the universal cover of $\hat{\Upsilon}$.
   Similarly, we obtain the corresponding regular covering $\breve{\chi}' \rightarrow \hat{\chi}'$.
   
   \begin{prop} \label{prop:stiffUniversalCover}
    There is an isometry $\varphi : \breve{\chi} \rightarrow \breve{\chi}'$.
   \end{prop}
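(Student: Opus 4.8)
The plan is to exhibit both $\breve\chi$ and $\breve\chi'$ as trees of spaces assembled from a common finite collection of \emph{decorated pieces} according to identical combinatorial rules, and then to construct $\varphi$ by propagating an isometry outwards across their locally finite underlying trees, where the absence of cycles guarantees the extension is never obstructed. The first task is to see that the building blocks agree. By the constructions of Sections~\ref{sec_step1}--\ref{sec_step3}, each of $\breve\chi$ and $\breve\chi'$ is a tree of spaces over a locally finite tree in which every edge space is a unit interval, every point vertex space is a point, and every one-ended vertex space is isometric to one of the finite covers $\hat\chi_v = \hat K_v\backslash X_v$ of Section~\ref{sec_step1}. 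I would upgrade this to an agreement of \emph{decorated} vertex spaces: the one-ended vertex space of $\breve\chi$ lying over $\chi_v$, together with its finite set of edge-attachment points (each labelled by its orbit under the group of decoration-preserving isometries of that vertex space), is isometric, compatibly with these labels, to the corresponding decorated vertex space of $\breve\chi'$. The inputs are that the set $P_v$ of points of $X_v$ at which edges of $X$ are attached depends only on $X$ and its fixed ideal tree-of-spaces structure, not on $\Gamma$ or $\Gamma'$; that $\hat K_v$ was chosen in Section~\ref{sec_step1} precisely as a finite-index normal subgroup of suitable conjugates of \emph{both} $\Gamma_v$ and $\Gamma'_v$; and that, by the edge count of Section~\ref{sec_step2}, the vertex space $\hat\chi_v$ sitting inside $\hat\chi$, hence inside $\breve\chi$, carries \emph{all} of the attachment points $\hat K_v\backslash P_v$, none being lost in the finite cover. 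Thus both decorated vertex spaces are copies of $(\hat K_v\backslash X_v,\ \hat K_v\backslash P_v)$, and the point vertices need no separate treatment since over them the cover is governed by the underlying graph, whose point-vertices retain the valence they have in $T$.

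Next I would verify that the gluing pattern agrees: the full labelled local combinatorial data --- which decorated vertex-space type occurs at each vertex, and, for each attachment point of a one-ended piece, both its label and the type of, and attachment datum in, the vertex space at the far end of the incident edge --- is the same for $\breve\chi$ and $\breve\chi'$, since every such datum is read off from the fixed $\Isom(X)$-action on $X$, through $T$ and the $\Isom(X)$-orbit structure of its vertices and edges, which is a single object independent of whether one quotients by $\Gamma$ or by $\Gamma'$. A tree of spaces over a tree is determined up to isometry by this labelled local data, so $\varphi$ can then be developed: fix base one-ended vertices of $\breve\chi$ and $\breve\chi'$ whose vertex spaces are covered by the same vertex space $X_v$ of $X$, fix an isometry of their decorated vertex spaces, and extend over an exhaustion by finite subtrees of spaces, at each stage crossing one new edge $\breve e$ matched to some $\breve e'$; because $\breve e$ and $\breve e'$ carry the same pair of labels, the incident edge space is a unit interval whose far end attaches, on both sides, to a vertex space of the same decorated type at matching attachment points, so $\varphi$ extends, and since the underlying trees have no cycles no consistency condition around a loop ever arises. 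Running the construction back-and-forth exhausts both spaces and produces the isometry $\varphi$.

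The hard part will be the identification of the building blocks and of the gluing pattern, because the two covers are built, a priori independently, over the unrelated bases $\chi$ and $\chi'$; the real content is to trace the decorated vertex spaces, the labels on their attachment points, and the far-end gluing data all back to the single space $X$ and the single family $\{\hat K_v\}_v$. In particular one must control the ambiguity introduced by the pointwise stabilizers $\Fix_H(X_v^+)$ --- which is what distinguishes the copy of $\hat K_v$ sitting inside $\Gamma$ from the one inside $\Gamma'$ --- and confirm that the intermediate finite covering $\hat\chi\to\chi$ of Section~\ref{sec_step2} introduces no asymmetry; equivalently, one is showing that the deck groups $\pi_1(\breve\chi), \pi_1(\breve\chi')\leqslant\Isom(X)$ are conjugate. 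Once the labelled local data is pinned down, the development step is a routine Bass--Serre-type argument.
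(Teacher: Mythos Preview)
Your proposal is correct and follows essentially the same strategy as the paper's proof: both build $\varphi$ by an inductive extension over an exhaustion of the underlying tree, using that every one-ended vertex space in $\breve\chi$ and in $\breve\chi'$ is the quotient $\hat K_v\backslash X_v^+$ of the \emph{same} piece of $X$ by the \emph{same} group. The only real difference is packaging: the paper works concretely through the universal cover, choosing at each step a lift $v_n\in VT$ adjacent to the previous lift and using the commutative triangle
\[
\xymatrix{ & X_{v_n}^+ \ar[dl]\ar[dr] & \\ \breve\chi_{u_n}^+ \ar[rr]^{\varphi_n} & & \breve\chi'^+_{u_n'} }
\]
to define $\varphi_n$ directly, whereas you encode the same information as ``labelled local data'' and invoke a general uniqueness principle for trees of spaces; your parenthetical that this amounts to conjugacy of $\pi_1(\breve\chi)$ and $\pi_1(\breve\chi')$ in $\Isom(X)$ is exactly right, and the paper's lift-and-project mechanism is a concrete realization of that conjugating element.
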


   \begin{proof}
    Let $\breve{T} := \pi_1 (\breve{\chi}) \backslash T$ and $\breve{T}' := \pi_1 (\breve{\chi}') \backslash T$ denote the underlying trees of $\breve{\chi}$ and $\breve{\chi}'$ respectively.
    The induced covering maps $P: X \rightarrow \breve \chi$ and $P' : X \rightarrow \breve \chi'$ respect the tree of spaces decomposition. So, there are quotient maps
    $p : T \rightarrow \breve{T}$ and $p' : T \rightarrow \breve{T}'$ so that $P$ induces a cover $X_v \rightarrow \breve \chi_{p(v)}$ for each $v\in VT$, and similarly for $P'$.
    Choose an exhaustive enumeration $u_0, u_1, u_2, \ldots$ of the vertices of $\breve{T}$ such that $\{u_0, \ldots, u_n\}$ span a subtree of $\breve{T}$ for all $n \in \N$.
    Let $\breve{\chi}_v^+$ and $\breve{\chi}_v'^+$ denote the $1$-neighborhood of the respective vertex spaces as before. 
    We will inductively define $\varphi_*: \breve{T} \rightarrow \breve{T}'$ and the map $\varphi$ along with it.
    
    Choose $v_0 \in V_1T$ such that $p(v_0) = u_0$.
    The covering map $X_{v_0}^+ \rightarrow \breve{\chi}_{u_0}^+$ is induced by quotienting by $\hat K_{v_0}$.
    Let $u_0' = p'(v_0)$ and observe that $X_{v_0}^+ \rightarrow \breve{\chi'}_{u_0'}^+$ is also obtained by quotienting by $\hat K_{v_0}$. 
    If $v_0 \in V_1T$ then $\breve{\chi}_{u_0}^+ = \hat{K}_{v_0} \backslash X_{v_0}^+ = \breve{\chi}_{u_0'}'^+$ so there is an isometry $\varphi_0: \breve{\chi}_{u_0}^+ \rightarrow \breve{\chi}_{u_0'}'^+$ such that the following diagram commutes:
    \[
     \xymatrix{
      & X_{v_0}^+ \ar[dl] \ar[dr] & \\
      \breve{\chi}_{u_0}^+ \ar[rr]^{\varphi_0} & & \breve{\chi}_{u_0'}'^+ \\
     }
    \]
   
    Now, we may proceed inductively, assuming that isometries $\varphi_0, \ldots, \varphi_{n-1}$ have been defined, and the map $\varphi_*$ is defined on all vertices $u_0, \ldots, u_{n-1}$ and their incident edges.
    For all $i < n$ there exists $v_i \in VT$ and a  map $\varphi_i$ has been defined such that the following commutes:
    \[
     \xymatrix{
     & X_{v_i}^+ \ar[dl] \ar[dr] & \\
      \breve{\chi}_{u_i}^+ \ar[rr]^{\varphi_i} & & \breve{\chi}_{u_i'}'^+ \\
     }
    \]
    
     The vertex $u_n$ is incident to some $u_j$ for $j < n$ via an edge $e_n$.
     Then there exists a vertex $v_n \in VT$  incident to $v_j$ via an edge $\tilde e_n$ such that $p(v_n) = u_n$ and $p(\tilde e_n) = e_n$.
     Let $u_n' = p'(v_n)$ and $e_n' = p'(\tilde e_n)$.
     Note that $e_n'$ connects $u_j'$ to $u_n'$.
    If $v_n \in V_1T$ then as in the initial case $\breve{\chi}_{u_n}^+ = \hat{K}_{v_n} \backslash X_{v_n}^+ = \breve{\chi}_{u_n'}'^+$ so there is an isometry $\varphi_n: \breve{\chi}_{u_n}^+ \rightarrow \breve{\chi}_{u_n'}'^+$ such that the following diagram commutes:
    \[
     \xymatrix{
      & X_{v_n}^+ \ar[dl] \ar[dr] & \\
      \breve{\chi}_{u_n}^+ \ar[rr]^{\varphi_n} & & \breve{\chi}_{u_n'}'^+ \\
     }
    \]
    Otherwise, if $v_n \notin V_1T$ then  the covering maps $X_{v_n}^+ \rightarrow \breve{\chi}_{u_n}^+$ and  $X_{v_n}^+ \rightarrow \breve{\chi}_{u_n'}'^+$ are isometries and $\varphi_n$ is given by the composition $\breve{\chi}_{u_n'} \rightarrow X_{v_n}^+ \rightarrow \breve{\chi}_{u_n'}'^+$ so the diagram commutes.
    Again, as in the initial case, if $e$ is an edge incident to $u_n$, then $\varphi_n(\breve \chi_e) = \breve \chi_{e'}$, where $e$ and $e'$ both correspond to the same $\hat{K}_{v_n}$-orbit of edge incident to $v_n$ in $T$.
    The edge space $X_{\tilde e_n}$ covers $\breve \chi_{e_n}$ and $\chi_{e_n'}'$ respectively by an isometry, so $\varphi_n$  will map $\breve \chi_{e_n}$ to $\breve \chi_{e_n'}$
    so we deduce that $\varphi_n$ is consistent with $\varphi_j$ on the edge space $\breve{\chi}_{e_n} = \breve{\chi}_{u_j}^+ \cap \breve{\chi}_{u_n}^+$.
    
    The induction is complete and taking all the $\varphi_n$ gives a well defined function $\varphi$ such that $\varphi_*$ is a local isometry between two trees, and hence an isomorphism. 
   \end{proof}
 
    Identifying $\breve{\chi}$ with $\breve{\chi}'$, we can say that $\breve{\chi}$ is a common regular cover of both $\hat{\chi}$ and $\hat{\chi}'$.

\subsection{The group $\Isom(\breve{\chi})\leq \Isom(\breve{T})$ is symmetry restricted}   \label{sec_step4}  
    
    Recall the notion of a {\it symmetry restricted subgroup} given in Definition~\ref{def:symmres}. 
    There are maps $P: \pi_1(\hat{\chi}) \rightarrow \Isom(\breve{\chi})$ and $P' : \pi_1 (\hat{\chi}') \rightarrow \Isom(\breve{\chi})$.
     Let $\Phi: \Isom(\breve{\chi}) \rightarrow \Isom(\breve{T})$ be the natural map induced by the tree of spaces decomposition $\breve{X} \rightarrow \breve{T}$. 
     Let $F := \Phi \circ P(\pi_1( \hat{\chi}))$ and $F' := \Phi \circ P'(\pi_1 (\hat{\chi}'))$.

    \begin{lemma} \label{lemma_free_uniform_breve}
     The groups $F$ and $F'$ are free uniform lattices in $\Aut(\breve{T})$.
    \end{lemma}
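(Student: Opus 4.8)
The plan is to identify $F$ (and symmetrically $F'$) with the deck group of the universal covering of graphs $\breve{T} \to \hat{\Upsilon}$, where $\hat{\Upsilon}$ denotes the finite underlying graph of $\hat{\chi}$, and then read off each of the three required properties — being finitely generated and free, acting freely, acting cocompactly — from this identification. The argument for $F'$ will be verbatim with $\hat{\chi}'$, $\breve{\chi}' \cong \breve{\chi}$, and $\hat{\Upsilon}'$ in place of $\hat{\chi}$, $\breve{\chi}$, $\hat{\Upsilon}$.

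First I would recall from Section~\ref{sec_step3} that $\breve{\chi} \to \hat{\chi}$ is the regular cover associated to the subgroup $N \trianglelefteq \pi_1(\hat{\chi})$ that is the normal closure of all the vertex groups, so that $P \colon \pi_1(\hat{\chi}) \to \Isom(\breve{\chi})$ is the deck action on $\breve{\chi} = N \backslash X$ and $\ker P = N$. Since $\hat{\chi}$ is a graph of spaces over the finite graph $\hat{\Upsilon}$ with every edge space a point, $\pi_1(\hat{\chi})$ is a free product of its vertex groups with $\pi_1(\hat{\Upsilon})$; hence $\pi_1(\hat{\chi})/N \cong \pi_1(\hat{\Upsilon})$, which is finitely generated and free. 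On the level of underlying trees, $\breve{T} = N \backslash T$ is a tree that covers $\hat{\Upsilon} = \pi_1(\hat{\chi}) \backslash T$, hence is the universal cover of $\hat{\Upsilon}$, and $\Phi$ restricted to $P(\pi_1(\hat{\chi}))$ realizes the induced action of $\pi_1(\hat{\chi})/N$ on $\breve{T}$ as a quotient of the standard deck action of $\pi_1(\hat{\Upsilon})$ on its universal cover.

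The one point requiring care — the step I expect to be the crux — is that $\Phi$ does not further collapse $P(\pi_1(\hat{\chi}))$, equivalently that the action of $\pi_1(\hat{\chi})/N$ on $\breve{T}$ is faithful. I would deduce this from the stronger assertion that the action is free: if $g \in \pi_1(\hat{\chi})$ has image stabilizing a vertex $\bar v$ of $\breve{T}$, choose a lift $v \in VT$ with $p(v) = \bar v$; since the image of $g$ acts on $\breve{T}$ as the descent of the action on $T$, we get $p(gv) = p(v)$, so $gv = nv$ for some $n \in N$, whence $n^{-1}g$ fixes $v$ and therefore lies in the stabilizer of $v$ for the $\pi_1(\hat{\chi})$-action on $T$, which is a conjugate of a vertex group and hence contained in $N$; thus $g \in N$. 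This shows at once that $F = \Phi \circ P(\pi_1(\hat{\chi})) \cong \pi_1(\hat{\chi})/N \cong \pi_1(\hat{\Upsilon})$ is finitely generated and free and acts freely on $\breve{T}$.

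Finally I would observe that $F \backslash \breve{T} = \pi_1(\hat{\chi}) \backslash T = \hat{\Upsilon}$ is a finite graph, so the action of $F$ on $\breve{T}$ is cocompact; in particular $\Aut(\breve{T})$ contains the cocompact subgroup $F$ and is itself cocompact. Since $\breve{T}$ is locally finite — each one-ended vertex space $\hat{\chi}_v = \hat{K}_v \backslash X_v^+$ of $\hat{\chi}$, equivalently of $\breve{\chi}$, has only finitely many incident edges because $\hat{K}_v$ acts cocompactly on $X_v$ while the incident edges form a discrete $\hat{K}_v$-invariant set, and every other vertex of $T$ already has finite valence by the construction in Section~\ref{sec_step1} — the group $F$ is a finitely generated free subgroup of $\Aut(\breve{T})$ acting freely and cocompactly, i.e.\ a free uniform lattice. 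Running the same reasoning with $\hat{\chi}'$ yields the statement for $F'$.
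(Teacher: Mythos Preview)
Your proposal is correct and follows essentially the same approach as the paper. The only cosmetic difference is in the freeness step: the paper argues geometrically that if $\Phi\circ P(g)$ fixes a vertex $v\in V\breve T$ then $P(g)$ stabilizes the vertex space $\breve\chi_v$, and since the restricted cover $\breve\chi_v\to\hat\chi_v$ is an isometry, the deck transformation $P(g)$ has a fixed point and is therefore trivial; you instead lift to $T$ and use directly that every vertex stabilizer of $\pi_1(\hat\chi)$ on $T$ lies in $N$ by the very definition of $N$ --- the two arguments are equivalent reformulations of the same fact.
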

    \begin{proof}
      The fundamental groups $\pi_1(\hat{\chi})$ and $\pi_1(\hat{\chi}')$ act cocompactly on $\breve{\chi}$ by construction. 
      To show that $F$ and $F'$ are free, we claim that $F$ and $F'$ act freely on the locally finite tree $\breve{T}$. Indeed, if a vertex $v \in \breve{T}$ is stabilized by an element $\Phi \circ P(g)$, then $g$ must stabilize $\breve{\chi}_v$, and so fixes $\breve{\chi}_v$, since by construction the covering map $\breve{\chi}_v \rightarrow \hat{\chi}_v$ is an isometry.
    \end{proof}

    \begin{lemma} \label{lemma:symmetry_restricted}
       The group $\breve{H}:=\Phi\bigl(\Aut(\breve{\chi})\bigr)$ is a $1$-symmetry restricted subgroup of $\Aut(\breve{T})$; that is, $${\breve{H} = \mathscr{S}_1(\breve{H}) \leqslant \Aut(\breve{T})}.$$ 
    \end{lemma}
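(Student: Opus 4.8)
The inclusion $\breve{H} \subseteq \mathscr{S}_1(\breve{H})$ is immediate (take $h = g$ at every vertex), so the content is the reverse inclusion, and the plan is to show that every $g \in \mathscr{S}_1(\breve{H})$ lifts to an isometry $\tilde{g}$ of $\breve{\chi}$ with $\Phi(\tilde{g}) = g$. The first thing I would record is a structural fact about $\breve{\chi}$: since its edge spaces are points and $\Aut(\breve{T})$ acts without inversions, an isometry of $\breve{\chi}$ lying over a tree automorphism $\sigma \in \Aut(\breve{T})$ is determined by $\sigma$ together with, for each vertex $v$, the restriction to an isometry $\breve{\chi}_v \to \breve{\chi}_{\sigma v}$ which carries the attaching point of each incident edge $e$ to the attaching point of $\sigma e$; conversely, any family of vertex-space isometries satisfying this single attaching-point compatibility with $\sigma$ glues (together with the forced order-preserving isometries on the edge intervals $[0,1] \to [0,1]$) to a bijective local isometry of $\breve{\chi}$, hence to an element of $\Aut(\breve{\chi})$ over $\sigma$. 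This is where the rigidity of the pieces --- copies of $\mathbb{H}^n_{\mathbb{F}}$ with edges attached at isolated points, or finite stars --- is used, and it is the one step that needs careful book-keeping (checking there are no extra identifications or orientation ambiguities along edge intervals); it involves no estimates.

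Next, I would fix $g \in \mathscr{S}_1(\breve{H})$ and, for each vertex $v$ of $\breve{T}$, choose $h_v \in \breve{H}$ with $(h_v)_v = g_v$ on $B_1(v)$. Since $\breve{H} = \Phi(\Aut(\breve{\chi}))$, lift $h_v$ to $\tilde{h}_v \in \Aut(\breve{\chi})$ and set $F_v := \tilde{h}_v|_{\breve{\chi}_v} \colon \breve{\chi}_v \to \breve{\chi}_{h_v v} = \breve{\chi}_{gv}$, the last equality because $h_v$ and $g$ agree on $B_1(v) \ni v$. Because $h_v$ also agrees with $g$ on every edge $e$ incident to $v$, the lift $\tilde{h}_v$ sends the edge interval of $e$ to that of $h_v e = ge$, so $F_v$ sends the attaching point of $e$ on $\breve{\chi}_v$ to the attaching point of $ge$ on $\breve{\chi}_{gv}$.

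The key observation --- and the reason $R=1$ is exactly the right radius --- is that the attaching-point constraint satisfied by $F_v$ depends only on $g$, not on the auxiliary choice of $h_v$: it records the way $g$ permutes the edges in the star of $v$, which is precisely the data visible in $B_1(v)$. Hence the family $\{F_v\}$ is automatically consistent in the sense of the first paragraph, so it glues (with the edge-interval maps dictated by $g$) to an isometry $\tilde{g}$ of $\breve{\chi}$ with $\Phi(\tilde{g}) = g$, giving $g \in \breve{H}$ and therefore $\mathscr{S}_1(\breve{H}) = \breve{H}$. I expect the main obstacle to be purely the verification in the first paragraph that an isometry of $\breve{\chi}$ respects the tree-of-spaces decomposition and is reconstructed from its vertex-space restrictions; once that is in hand the rest is formal.
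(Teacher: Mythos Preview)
Your proposal is correct and follows essentially the same approach as the paper: for each vertex $v$ you pick a lift $\tilde{h}_v \in \Aut(\breve{\chi})$ of some $h_v \in \breve{H}$ agreeing with $g$ on $B_1(v)$, observe that the resulting vertex-space isometries are compatible along edges because the edge data is determined by $g$ alone, and glue. The paper's version is slightly more streamlined in that it works with the $1$-neighborhoods $\breve{\chi}_v^+$ rather than the bare vertex spaces, so the overlap $\breve{\chi}_u^+ \cap \breve{\chi}_v^+$ is exactly the edge interval and the consistency check becomes a one-liner, but the content is the same.
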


    \begin{proof}
     Suppose that $g \in \mathscr{S}_1(\breve{H})$. We wish to find $h \in \Aut(\breve{\chi})$ such that $\Phi(h) = g$. By the definition of $\mathscr{S}_1(\breve{H})$, for each $v \in V\breve{T}$ the restriction $g_v : N_1(v) \rightarrow N_1(g v)$ is equal to the restriction of some $h_v \in \Aut(\breve{X})$ to $N_1(\breve{\chi}_v) = \breve{\chi}_v^+$. 
     If $u$ and $v$ are adjacent vertices then the isometries $h_u$ and~$h_v$ agree on the edge space $\breve{\chi}_e = \breve{\chi}_u^+ \cap \breve{\chi}_v^+$, where $e$ is the edge connecting $u$ to $v$. See Figure~\ref{fig:symm_res}.
     Thus, we can define an isometry $h$ of $\breve{\chi}$ to be $h_v$ on~$\breve{\chi}_v^+$.
    \end{proof}

           \begin{figure}
	\begin{overpic}[width=.6\textwidth, tics=5,]{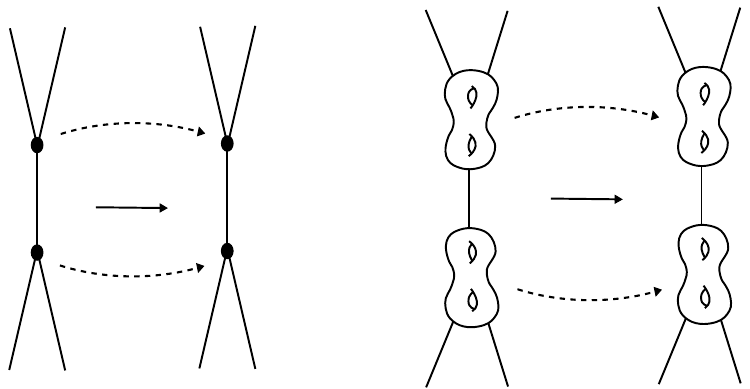} 
	    \put(0,32){$v$}
	    \put(0,18){$u$}
	    \put(17,26){$g$}
	    \put(16,38){$g_v$}
	    \put(16,11){$g_u$}
	    \put(76,40){$h_v$}
	    \put(77,27){$h$}
	    \put(76,7){$h_u$}
	    \put(-5,48){$\breve{T}$}
	    \put(102,50){$\breve{\chi}$}	  
	    \put(53,35){$\breve{\chi}_v$}
	    \put(53,14){$\breve{\chi}_u$}  
          \end{overpic}
	\caption{An illustration of the gluing in Lemma~\ref{lemma:symmetry_restricted}. }
	\label{fig:symm_res}
    \end{figure}

     \begin{proof}[Proof of Theorem~\ref{thm:weakCommensurabilityInIdeal}]
      By applying Theorem~\ref{thm:symmetryRestrictedLeighton} we obtain $\breve{h} \in \breve{H}$ such that $\bar{F} = \breve{h}F\breve{h}^{-1} \cap F'$ is a finite-index subgroup of both $\breve{h}F\breve{h}^{-1}$, and $F'$.
      Thus $\breve{h}^{-1} \bar{F} \breve{h}$ is a finite-index subgroup of $F$ with associated finite cover $\bar{\chi} \rightarrow \hat{\chi}$, that is isometric to the finite cover $\bar{\chi}' \rightarrow \hat{\chi}'$ associated to $\bar{F}$, a finite-index subgroup of $F'$.
      Thus we have an element $h \in H$ such that $(\pi_1 \bar \chi )^h$ is equal to $\pi_1 \bar \chi'$, and thus $h$ commensurates $\Gamma$ to $\Gamma'$ in $H$.
     \end{proof}

   \section{Free products of finite extensions} \label{sec:hypotheticalCounterexample}

   Let $G$ be a uniform lattice in $\Isom(\mathbb{H}^n_{\mathbb{F}})$, and suppose that there exists a finite extension 
   $$ 1 \rightarrow F \rightarrow E \rightarrow G \rightarrow 1$$
   such that $E$ is not residually finite.
   Since $G$ is residually finite, by taking the preimage in $E$ of a torsion-free, finite-index subgroup of $G$, we can assume that $G$ is torsion free.
   The non-residual finiteness of $E$ implies that there exists some element of $E$ that lives in every finite-index subgroup of~$E$.
   Since $G$ is residually finite, this element must lie in $F$.
   Let $f \in F$ denote such an element. As $F$ is finite, let $r$ be the order of $f$.
   
   Consider the hyperbolic triangle group 
   $$\Delta = \Delta(r,5,5) = \langle a, b, c \mid a^2 = b^2 = c^2 = (ab)^r = (bc)^5 = (ac)^5 =1 \rangle.
   $$
   Consider the following amalgamated free product:
   \[
    H := E *_{\langle f \rangle = \langle ab \rangle} \Delta
   \]
   We will show the following:
   
   \begin{prop}
    Under the assumption that $E$ is not residually finite, the group $H$ is not action rigid.
    Moreover, there exists $H'$ that shares a common model geometry with $H$, but is not even virtually isomorphic to~$H$.
   \end{prop}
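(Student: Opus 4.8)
The plan is to construct $H'$ by replacing the non-residually-finite extension $E$ with the split extension $E' := G \times F$, inside which the cyclic subgroup $F_0 := \langle f\rangle \cong \Z/r\Z$ survives as $\{1\}\times F_0$, and to set
\[
H' := E' \ast_{\,\{1\}\times F_0 \,=\, \langle ab\rangle\,} \Delta .
\]
This makes sense because $ab$ has order exactly $r$ in $\Delta(r,5,5)$, and $(r,5,5)$ is a hyperbolic triangle since $r\geq 2$. Both $E$ and $E'$ are finite extensions of $G$ whose kernels ($F$ and $\{1\}\times F$) have equal order and whose induced actions on $\fhyp$ factor through the same embedding $G\hookrightarrow\Isom(\fhyp)$. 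We will show $H$ and $H'$ share a common model geometry but are not virtually isomorphic; since abstract commensurability is a special case of virtual isomorphism, this witnesses that $H$ is not action rigid.

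\emph{Common model geometry.} Pick a basepoint $x_0\in\fhyp$ (as $G$ is torsion-free and acts properly it acts freely, so $G\cdot x_0$ is discrete and cocompact) and a point $p\in\mathbb{H}^2$ fixed by the rotation $ab$. Build a tree of spaces $X$ over the Bass--Serre tree of $H=E\ast_{\langle f\rangle}\Delta$: over the $E$-orbit of the base vertex put copies of $\fhyp$ with $E$ acting through $G$, over the $\Delta$-orbit put copies of $\mathbb{H}^2$ with $\Delta$ acting as the triangle group, and make each edge a unit interval glued on the $E$-side to a point of $G\cdot x_0$ (with $|F|/r$ edge germs at each such point, forming one $\Stab_E(x_0)=F$-orbit, namely the $F$-set $F/F_0$) and on the $\Delta$-side to a point of $\Delta\cdot p$. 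Then $X$ is a proper geodesic metric space and $H$ acts on it geometrically: point stabilizers are finite (subgroups of conjugates of $E$ or $\Delta$, which act with finite point stabilizers, or of $\langle f\rangle$), and the quotient is the finite graph of spaces $G\backslash\fhyp$ and $\Delta\backslash\mathbb{H}^2$ joined by an edge. Running the identical construction with $E$ replaced by $E'=G\times F$ (and $\langle f\rangle$ by $\{1\}\times F_0$) yields a tree of spaces isometric to $X$: the decorated $E'$-action on $\fhyp$ is isomorphic to the decorated $E$-action — same orbit $G\cdot x_0$ of attaching points, same point stabilizer (abstractly $F$), same germ $F$-set $F/F_0$, glued to $\langle ab\rangle$ on the $\Delta$-side in the same way — and the $\Delta$-side data is literally the same. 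Hence $H'$ also acts geometrically on $X$, so $X$ is a common model geometry.

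\emph{Not virtually isomorphic.} First, $H$ is not residually finite: $f$ lies in every finite-index subgroup of $E$ by hypothesis, and any finite-index $H_0\leqslant H$ meets $E$ in a finite-index subgroup, so the nontrivial element $f$ lies in every finite-index subgroup of $H$. Second, $H'$ is residually finite: $E'=G\times F$ is residually finite (a product of residually finite groups), $\Delta$ is residually finite (finitely generated linear), and amalgamated free products of residually finite groups over a finite edge group are residually finite (as cited in the proof of Lemma~\ref{lemma:torsion_free}). Now suppose $H_0\leqslant H$ and $H_0'\leqslant H'$ are finite-index, $N\trianglelefteq H_0$ and $M\trianglelefteq H_0'$ are finite, and $\psi\colon H_0/N\xrightarrow{\ \sim\ }H_0'/M$ is an isomorphism. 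Then $f\in H_0$, and $\bar f:=fN\neq 1$ in $H_0/N$: the element $f=ab$ lies in the finite-index subgroup $\Lambda:=H_0\cap\Delta$ of the triangle group $\Delta$, which is a nonelementary Fuchsian group, hence has no nontrivial finite normal subgroup (such a subgroup would fix a nonempty $\Lambda$-invariant totally geodesic subspace of $\mathbb{H}^2$, which for nonelementary $\Lambda$ must be all of $\mathbb{H}^2$, forcing triviality), so the normal closure of $f$ in $\Lambda$, and a fortiori in $H_0$, is infinite while $N$ is finite. Moreover $\bar f$ lies in every finite-index subgroup of $H_0/N$, since the preimage of any such is finite-index in $H_0$, hence in $H$, hence contains $f$. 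Transporting through $\psi$, the element $\bar g:=\psi(\bar f)\in H_0'/M$ is nontrivial and lies in every finite-index subgroup of $H_0'/M$. But $H_0'$ is residually finite and the quotient of a residually finite group by a finite normal subgroup is residually finite (given $1\neq x$, separate each of the finitely many elements of the coset $xM$ from the identity in finite quotients and take the product of these quotients; the preimage of the image of $M$ is then a finite-index normal subgroup containing $M$ but not $x$). So $H_0'/M$ has no such element $\bar g$ — a contradiction. Therefore $H$ and $H'$ are not virtually isomorphic.

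\emph{Main obstacle.} The delicate point is the common-model-geometry step: one must verify that the two tree-of-spaces constructions really produce isometric spaces, which reduces to checking that the decorated actions of $E$ and of $E'=G\times F$ on $\fhyp$ — the orbit of attaching points, the point stabilizers, the $F$-set of edge germs at each attaching point, and the identification of the edge germ groups with $\langle ab\rangle$ on the $\Delta$-side — are isomorphic. This is precisely what forces the choice $E'=G\times F$ rather than, say, $G\times\Z/r\Z$: the kernels of the two actions on $\fhyp$ must have the same order. The secondary subtlety, ruling out $f\in N$, is handled above via the absence of nontrivial finite normal subgroups in the Fuchsian group $H_0\cap\Delta$.
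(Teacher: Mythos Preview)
Your proof is correct, but takes a genuinely different route from the paper's.

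\textbf{The construction of $H'$.} The paper takes $H'$ to be an explicit torsion-free free product
\[
H' = (\ast_{i=1}^q G) \ast (\ast_{i=1}^p \Delta') \ast \mathbb{F}_{pq-p-q+1},
\]
where $p=|F|/r$, $\Delta'\leqslant\Delta$ is a torsion-free finite-index subgroup, and $q=[\Delta:\Delta']/r$; it then realizes $H'$ as the fundamental group of a finite graph of spaces whose universal cover is isometric to the tree of spaces $X$ built for $H$. Your choice $H'=(G\times F)\ast_{\{1\}\times F_0=\langle ab\rangle}\Delta$ is more economical and structurally parallel to $H$: both are amalgams over the same finite cyclic edge group with identical $\Delta$-vertex data, and on the $\fhyp$-side the two extensions $E$ and $G\times F$ act through the same quotient $G$ with the same kernel order, so the attaching pattern (points $G\cdot x_0$, $|F|/r$ edges per point) matches. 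This makes the ``common model geometry'' step cleaner than in the paper. What the paper's construction buys is that its $H'$ is \emph{already} a torsion-free free product of uniform lattices, so it directly witnesses the necessity of the residual-finiteness hypothesis in Theorem~\ref{thm_first}; your $H'$ has torsion, though by Corollary~\ref{cor:qiClosed} it is virtually of the required form.

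\textbf{The ``not virtually isomorphic'' step.} The paper's argument here is shorter: it observes that any finite normal subgroup $J$ of a finite-index subgroup $\Upsilon\leqslant H$ (or $H'$) must fix a vertex of the Bass--Serre tree $T$, hence by normality fix the entire $\Upsilon$-orbit, hence fix all of $T$; but the kernel of the action on $T$ is trivial, so $J=1$. Thus virtual isomorphism collapses to abstract commensurability, which is ruled out since residual finiteness is a commensurability invariant. Your route---showing $f\notin N$ via the absence of finite normal subgroups in the Fuchsian group $H_0\cap\Delta$, and then checking that residual finiteness survives quotienting by a finite normal subgroup---is correct but more laborious. You could streamline it by adopting the paper's tree argument in place of the Fuchsian detour.
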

   
   \begin{proof}
   First we construct a model geometry for $H$.
   Let $T$ be the Bass-Serre tree for $H$.
   Let $v$ be the vertex stabilized by $E \leqslant H$ and $u$ be the adjacent vertex stabilized by $\Delta \leqslant H$. Let $e=\{v,u\}$. 
   Let $X_v := \mathbb{H}_{\mathbb{F}}^n$.
   We have a geometric action of $E$ on $X_v$ given by the surjection onto $G$.
   Fix a basepoint $x_v \in X_v$. 
   Note that $x_v$ is fixed by $F$, while $G$ itself acts freely on $X_v$.
   Let $X_u := \mathbb{H}^2$ with the associated $\Delta$-action.
   Fix the basepoint $x_u \in X_u$ to be the unique fixed point of the torsion element $ab \in \Delta$.
   
   Let $1,h_1,h_2,\ldots$ be coset representatives of $H/H_v$ and let $[h]_v$ denote the coset containing $h$.
   The vertices $v, h_1v, h_2v,\ldots $ are the $H$-orbit of $v$.
   Similarly, let $1,h'_1,h'_2,\ldots$ be coset representatives of $H/H_u$ and $[h]_u$ denote the corresponding coset.
   The vertices $u, h_1u, h_2u,\ldots $ are the $H$-orbit of $u$.
   Define an action of $H$ on $X_v \times H/H_v$ by letting
   \[
    h \cdot (x, [h_i]_v) = (h_j^{-1} h h_i \cdot x, [h_j]_v = [hh_i]_v),
   \]
   which is well defined since $[h_j]_v =[hh_i]_v$ implies that $h_j^{-1}hh_i \in H_v$.
   Similarly, define an action of $H$ on $X_u \times H/H_u$ by letting
   \[
    h \cdot (x, [h_i']_u) = ({h'}_j^{-1} h h'_i \cdot x, [h'_j]_u = [hh'_i]_u).
   \]
   For $\hat h \in H$, let $[\hat h]_e$ denote the coset $\hat h H_e$ in $H/H_e$. Define the $H$-action on $[0,1] \times H / H_e$ by
   \[
     h \cdot (\alpha, [\hat h]_e) = (\alpha, [ h \hat h]_e)
   \]
   for $\hat h \in H$. We define the model geometry to be the quotient space 
   $$X := (X_v \times H/H_v) \times (X_u \times H/H_u) \times ([0,1] \times H/H_e) \slash \sim$$
   by letting $$(0, [\hat h]_e) \sim (h_i^{-1} \hat h x_v, [\hat h]_v = [h_i]_v)$$
   and 
   $$(1, [\hat h]_e) \sim ({h_i'}^{-1} \hat h x_u, [\hat h]_u = [h_i']_u).$$
   Note that this does not depend on the choice of representative $\hat h$ since $H_e$ fixes $x_u$ and $x_v$. 
   We can verify that the $H$ action on the disjoint union passes to the quotient space by observing that
   \begin{align*}
     h \cdot (0, [\hat h]_e) & = (0, [h \hat h]_e) \\
                             & \sim ( h_j^{-1} h \hat h x_v, [h \hat h]_v = [h_j]_v) \\
                             & = \big( (h_j^{-1}h h_i) (h_i^{-1} \hat h x_v) , [ h h_i]_v = [h_j]_v \big) \\
                             & = h \cdot (h_i^{-1} \hat h x_v, [h]_v = [h_i]_v).
   \end{align*}
  Since $(0, [\hat h]_e) \sim (h_i^{-1} \hat h x_v, [h]_v = [h_i]_v)$, we conclude that the action respects the relation $\sim$, so $H$ acts on $X$.

  The space $X$ decomposes as an $H$-equivariant tree of spaces via the map 
  $$ p: X \rightarrow T$$
  given by mapping $(x, [h_i]_v) \mapsto h_iv$ and $(x, [h_i']_u) \mapsto h'_i u$ and linearly mapping $[0,1] \times [\hat h]_e$ to the edge $\hat h e$ so that $(0, [\hat h]_e) \mapsto \hat h v$ and $(1, [\hat h]_e) \mapsto \hat h u$.
  
  The vertices of the underlying tree $T$ are bicoloured according to whether the vertex space is isometric to $\mathbb{H}^n_{\mathbb{F}}$ or $\mathbb{H}^2$.
  Within each vertex space isometric to $\mathbb{H}^n_{\mathbb{F}}$ there are $|F|/ r$ edge spaces attached to each $G$-orbit point of $x_v$, and in each vertex space isometric to $\mathbb{H}^2$ there is a single edge space attached to each $\Delta$-orbit point of $x_u$.
  Any other space that decomposes in the fashion described will be isometric to $X$, since a suitably chosen isometry between vertex spaces can be extended to the incident edge space, then the neighbouring vertex spaces, and so on through the entire tree of spaces.

   We now construct $H'$.
   Let $p = |F|/r$.
   Let $\Delta' \leqslant \Delta$ be a torsion-free, finite-index subgroup.
   Note that $r$ divides $[\Delta: \Delta']$ so let $q = [\Delta : \Delta'] / r$.
   Let 
   \[
    H' = (*_{i=1}^q G) * (*_{i=1}^p \Delta') *\mathbb{F}_{pq - p - q +1}
   \]
   Realize $H'$ as the fundamental group of a graph of spaces obtained by taking $q$ copies of $G \backslash X_v$ and $p$ copies of $\Delta' \backslash X_u$.
   The underlying graph will be the bipartite graph $\Gamma$ with $q$ vertices of valence $p$ with vertex space $G \backslash X_v$ and $p$-vertices of valence $q$ with vertex space  $\Delta' \backslash X_u$.
   The edge spaces will be intervals with endpoints attached to either the basepoint $\bar x_v \in G \backslash X_v$ covered by $x_v$, or the $q$ distinct points in $\Delta' \backslash X_u$ covered by the orbits of $\Delta x_u \subseteq X_u$.
   The resulting graph of spaces $Y$ will have universal cover isometric to $X$.   
   
   To see that $H$ and $H'$ are not virtually isomorphic, suppose $J$ is a finite normal subgroup of $\Upsilon$, where $\Upsilon$ is a finite-index subgroup of $H$ or $H'$. Then, $J$ will fix a vertex of $T$ (since $J$ is finite), and therefore the entire $\Upsilon$-orbit of that vertex (since $J$ is normal), and thus $J$ will fix the entire tree $T$.
   The only elements of $H$ or $H'$ that fix the entire tree are the identity, so $J$ must be trivial.
   Thus if $H$ and $H'$ are virtually isomorphic, then they must be abstractly commensurable, which is impossible since $H'$ is residually finite, $H$ is not, and residual finiteness is a commensurability invariant.
   \end{proof}


\bibliographystyle{alpha}
\bibliography{Ref}

\newcommand{\etalchar}[1]{$^{#1}$}
\begin{thebibliography}{GJZPZ14}

\bibitem[Ago13]{agol}
Ian Agol.
\newblock The virtual {H}aken conjecture.
\newblock {\em Doc. Math.}, 18:1045--1087, 2013.
\newblock With an appendix by Agol, Daniel Groves, and Jason Manning.

\bibitem[Bas93]{Bass93}
Hyman Bass.
\newblock Covering theory for graphs of groups.
\newblock {\em J. Pure Appl. Algebra}, 89(1-2):3--47, 1993.

\bibitem[Bau63]{Baumslag63}
Gilbert Baumslag.
\newblock On the residual finiteness of generalised free products of nilpotent
  groups.
\newblock {\em Trans. Amer. Math. Soc.}, 106:193--209, 1963.

\bibitem[BF91]{bestvinafeighn91}
Mladen Bestvina and Mark Feighn.
\newblock Bounding the complexity of simplicial group actions on trees.
\newblock {\em Invent. Math.}, 103(3):449--469, 1991.

\bibitem[BH99]{bridsonhaefliger}
Martin~R. Bridson and Andr{\'e} Haefliger.
\newblock {\em Metric spaces of non-positive curvature}, volume 319 of {\em
  Grundlehren der Mathematischen Wissenschaften [Fundamental Principles of
  Mathematical Sciences]}.
\newblock Springer-Verlag, Berlin, 1999.

\bibitem[BJN09]{behrstockjanuszkiewiczneumann}
Jason~A. Behrstock, Tadeusz Januszkiewicz, and Walter~D. Neumann.
\newblock Commensurability and {QI} classification of free products of finitely
  generated abelian groups.
\newblock {\em Proc. Amer. Math. Soc.}, 137(3):811--813, 2009.

\bibitem[BK90]{BassKulkarni90}
Hyman Bass and Ravi Kulkarni.
\newblock Uniform tree lattices.
\newblock {\em J. Amer. Math. Soc.}, 3(4):843--902, 1990.

\bibitem[BM00]{burgermozes}
Marc Burger and Shahar Mozes.
\newblock Lattices in product of trees.
\newblock {\em Inst. Hautes \'Etudes Sci. Publ. Math.}, (92):151--194 (2001),
  2000.

\bibitem[BN12]{behrstockneumann12}
Jason~A. Behrstock and Walter~D. Neumann.
\newblock Quasi-isometric classification of non-geometric 3-manifold groups.
\newblock {\em J. Reine Angew. Math.}, 669:101--120, 2012.

\bibitem[Bow06]{bowditch06}
Brian~H. Bowditch.
\newblock {\em A course on geometric group theory}, volume~16 of {\em MSJ
  Memoirs}.
\newblock Mathematical Society of Japan, Tokyo, 2006.

\bibitem[BT78]{BaumslagTretkoff78}
Benjamin Baumslag and Marvin Tretkoff.
\newblock Residually finite {HNN} extensions.
\newblock {\em Comm. Algebra}, 6(2):179--194, 1978.

\bibitem[BW12]{BergeronWise}
Nicolas Bergeron and Daniel~T. Wise.
\newblock A boundary criterion for cubulation.
\newblock {\em Amer. J. Math.}, 134(3):843--859, 2012.

\bibitem[CdlH16]{CornulierDeLaHarpe}
Yves Cornulier and Pierre de~la Harpe.
\newblock {\em Metric geometry of locally compact groups}, volume~25 of {\em
  EMS Tracts in Mathematics}.
\newblock European Mathematical Society (EMS), Z\"{u}rich, 2016.
\newblock Winner of the 2016 EMS Monograph Award.

\bibitem[Cho96]{Chow96}
Richard Chow.
\newblock Groups quasi-isometric to complex hyperbolic space.
\newblock {\em Trans. Amer. Math. Soc.}, 348(5):1757--1769, 1996.

\bibitem[CJ94]{CassonJungreis94}
Andrew Casson and Douglas Jungreis.
\newblock Convergence groups and {S}eifert fibered {$3$}-manifolds.
\newblock {\em Invent. Math.}, 118(3):441--456, 1994.

\bibitem[Del78]{Deligne78}
Pierre Deligne.
\newblock Extensions centrales non r\'{e}siduellement finies de groupes
  arithm\'{e}tiques.
\newblock {\em C. R. Acad. Sci. Paris S\'{e}r. A-B}, 287(4):A203--A208, 1978.

\bibitem[DK18]{DrutuKapovich}
Cornelia Dru\c{t}u and Michael Kapovich.
\newblock {\em Geometric group theory}, volume~63 of {\em American Mathematical
  Society Colloquium Publications}.
\newblock American Mathematical Society, Providence, RI, 2018.
\newblock With an appendix by Bogdan Nica.

\bibitem[DT16]{DasTessera16}
Kajal Das and Romain Tessera.
\newblock Integrable measure equivalence and the central extension of surface
  groups.
\newblock {\em Groups Geom. Dyn.}, 10(3):965--983, 2016.

\bibitem[Dun85]{dunwoody85}
M.~J. Dunwoody.
\newblock The accessibility of finitely presented groups.
\newblock {\em Invent. Math.}, 81(3):449--457, 1985.

\bibitem[FPP{\etalchar{+}}]{FriedlEtAl}
Stefan Friedl, JungHwan Park, Bram Petri, Jean Raimbault, and Arunima Ray.
\newblock Nonmanifold hyperbolic groups of high cohomological dimension.
\newblock https://arxiv.org/abs/1807.09861.

\bibitem[Gab]{Gaboriau2019}
Damien Gaboriau.
\newblock On the top-dimensional {$l^2$}-betti numbers.
\newblock https://arxiv.org/abs/1909.01633.

\bibitem[Gab92]{gabai92}
David Gabai.
\newblock Convergence groups are {F}uchsian groups.
\newblock {\em Ann. of Math. (2)}, 136(3):447--510, 1992.

\bibitem[Gab02]{Gaboriau02}
Damien Gaboriau.
\newblock Invariants {$l^2$} de relations d'\'{e}quivalence et de groupes.
\newblock {\em Publ. Math. Inst. Hautes \'{E}tudes Sci.}, (95):93--150, 2002.

\bibitem[GJZPZ14]{GrunewaldJaikin-ZapirainPintoZalesskii14}
Fritz Grunewald, Andrei Jaikin-Zapirain, Aline G.~S. Pinto, and Pavel~A.
  Zalesskii.
\newblock Normal subgroups of profinite groups of non-negative deficiency.
\newblock {\em J. Pure Appl. Algebra}, 218(5):804--828, 2014.

\bibitem[GJZZ08]{GrunewaldJaikin-ZapirainZalesskii08}
F.~Grunewald, A.~Jaikin-Zapirain, and P.~A. Zalesskii.
\newblock Cohomological goodness and the profinite completion of {B}ianchi
  groups.
\newblock {\em Duke Math. J.}, 144(1):53--72, 2008.

\bibitem[GPS88]{GromovPiatetski-Shapiro}
M.~Gromov and I.~Piatetski-Shapiro.
\newblock Nonarithmetic groups in {L}obachevsky spaces.
\newblock {\em Inst. Hautes \'{E}tudes Sci. Publ. Math.}, (66):93--103, 1988.

\bibitem[Gro87]{gromov}
M.~Gromov.
\newblock Hyperbolic groups.
\newblock In {\em Essays in group theory}, volume~8 of {\em Math. Sci. Res.
  Inst. Publ.}, pages 75--263. Springer, New York, 1987.

\bibitem[Hil19]{Hill19}
Richard~M. Hill.
\newblock Non-residually finite extensions of arithmetic groups.
\newblock {\em Res. Number Theory}, 5(1):Paper No. 2, 27, 2019.

\bibitem[Hin85]{hinkkanen85}
A.~Hinkkanen.
\newblock Uniformly quasisymmetric groups.
\newblock {\em Proc. London Math. Soc. (3)}, 51(2):318--338, 1985.

\bibitem[Hin90]{hinkkanen90}
A.~Hinkkanen.
\newblock The structure of certain quasisymmetric groups.
\newblock {\em Mem. Amer. Math. Soc.}, 83(422):iv+87, 1990.

\bibitem[HW09]{HruskaWise09}
G.~Christopher Hruska and Daniel~T. Wise.
\newblock Packing subgroups in relatively hyperbolic groups.
\newblock {\em Geom. Topol.}, 13(4):1945--1988, 2009.

\bibitem[KL01]{KleinerLeeb01}
Bruce Kleiner and Bernhard Leeb.
\newblock Groups quasi-isometric to symmetric spaces.
\newblock {\em Comm. Anal. Geom.}, 9(2):239--260, 2001.

\bibitem[KL09]{KleinerLeeb09}
Bruce Kleiner and Bernhard Leeb.
\newblock Induced quasi-actions: a remark.
\newblock {\em Proc. Amer. Math. Soc.}, 137(5):1561--1567, 2009.

\bibitem[Lei82]{Leighton}
Frank~Thomson Leighton.
\newblock Finite common coverings of graphs.
\newblock {\em J. Combin. Theory Ser. B}, 33(3):231--238, 1982.

\bibitem[Mar06]{markovic06}
Vladimir Markovic.
\newblock Quasisymmetric groups.
\newblock {\em J. Amer. Math. Soc.}, 19(3):673--715, 2006.

\bibitem[Mos73]{Mostow73}
G.~D. Mostow.
\newblock {\em Strong rigidity of locally symmetric spaces}.
\newblock Princeton University Press, Princeton, N.J.; University of Tokyo
  Press, Tokyo, 1973.
\newblock Annals of Mathematics Studies, No. 78.

\bibitem[MR03]{MaclachlanReid}
Colin Maclachlan and Alan~W. Reid.
\newblock {\em The arithmetic of hyperbolic 3-manifolds}, volume 219 of {\em
  Graduate Texts in Mathematics}.
\newblock Springer-Verlag, New York, 2003.

\bibitem[MS15]{martinswiatkowski}
Alexandre Martin and Jacek \'{S}wiatkowski.
\newblock Infinitely-ended hyperbolic groups with homeomorphic {G}romov
  boundaries.
\newblock {\em J. Group Theory}, 18(2):273--289, 2015.

\bibitem[MSW03]{moshersageevwhyteI}
Lee Mosher, Michah Sageev, and Kevin Whyte.
\newblock Quasi-actions on trees. {I}. {B}ounded valence.
\newblock {\em Ann. of Math. (2)}, 158(1):115--164, 2003.

\bibitem[NR92]{NeumannReid}
Walter~D. Neumann and Alan~W. Reid.
\newblock Arithmetic of hyperbolic manifolds.
\newblock In {\em Topology '90 ({C}olumbus, {OH}, 1990)}, volume~1 of {\em Ohio
  State Univ. Math. Res. Inst. Publ.}, pages 273--310. de Gruyter, Berlin,
  1992.

\bibitem[Pan89]{Pansu89}
Pierre Pansu.
\newblock M\'{e}triques de {C}arnot-{C}arath\'{e}odory et quasiisom\'{e}tries
  des espaces sym\'{e}triques de rang un.
\newblock {\em Ann. of Math. (2)}, 129(1):1--60, 1989.

\bibitem[PW02]{papasogluwhyte}
Panos Papasoglu and Kevin Whyte.
\newblock Quasi-isometries between groups with infinitely many ends.
\newblock {\em Comment. Math. Helv.}, 77(1):133--144, 2002.

\bibitem[Ser80]{serre}
Jean-Pierre Serre.
\newblock {\em Trees}.
\newblock Springer-Verlag, Berlin-New York, 1980.
\newblock Translated from the French by John Stillwell.

\bibitem[SGW]{ShepherdGardamWoodhouse}
Sam Shepherd, Giles Gardam, and Daniel~J. Woodhouse.
\newblock Two generalisations of {L}eighton's theorem.
\newblock https://arxiv.org/abs/1908.00830.

\bibitem[Sta68]{stallings}
John~R. Stallings.
\newblock On torsion-free groups with infinitely many ends.
\newblock {\em Ann. of Math. (2)}, 88:312--334, 1968.

\bibitem[SW79]{scottwall}
Peter Scott and Terry Wall.
\newblock Topological methods in group theory.
\newblock In {\em Homological group theory ({P}roc. {S}ympos., {D}urham,
  1977)}, volume~36 of {\em London Math. Soc. Lecture Note Ser.}, pages
  137--203. Cambridge Univ. Press, Cambridge-New York, 1979.

\bibitem[SW18]{starkwoodhouse}
Emily Stark and Daniel Woodhouse.
\newblock Quasi-isometric groups with no common model geometry.
\newblock {\em J. Lond. Math. Soc.}, 2018.
\newblock To appear.

\bibitem[Swe01]{swenson}
Eric~L. Swenson.
\newblock Quasi-convex groups of isometries of negatively curved spaces.
\newblock {\em Topology Appl.}, 110(1):119--129, 2001.
\newblock Geometric topology and geometric group theory (Milwaukee, WI, 1997).

\bibitem[Tre73]{Tretkoff73}
Marvin Tretkoff.
\newblock The residual finiteness of certain amalgamated free products.
\newblock {\em Math. Z.}, 132:179--182, 1973.

\bibitem[Tuk86]{tukia}
Pekka Tukia.
\newblock On quasiconformal groups.
\newblock {\em J. Analyse Math.}, 46:318--346, 1986.

\bibitem[Why99]{whyte}
Kevin Whyte.
\newblock Amenability, bi-{L}ipschitz equivalence, and the von {N}eumann
  conjecture.
\newblock {\em Duke Math. J.}, 99(1):93--112, 1999.

\bibitem[Wis]{wise_hier}
Dani Wise.
\newblock The structure of groups with a quasiconvex hierarchy.
\newblock http://comet.lehman.cuny.edu/behrstock/cbms/program.html, unpublished
  manuscript.

\bibitem[Woo]{Woodhouse18}
Daniel~J. Woodhouse.
\newblock Revisiting {L}eighton's theorem with the {H}aar measure.
\newblock Preprint at: arXiv 1806.08196.

\end{thebibliography}

\end{document}